\newtheorem{theorem}{Theorem}
\newtheorem{lemma}[theorem]{Lemma}
\newtheorem{corollary}[theorem]{Corollary}
\newtheorem{definition}[theorem]{Definition}
\newtheorem{construction}[theorem]{Construction}
\newtheorem{proposition}[theorem]{Proposition}
\theoremstyle{definition}
\newtheorem{remark}[theorem]{Remark}
\newcommand{\showpmcdots}[3]{
\foreach \x in {1,...,#3} {
	\filldraw (#1,\x) circle (0.1);
	\filldraw (#2,\x) circle (0.1);
}}
\newenvironment{sdn}[2][7pt]
{\begin{tikzpicture} [x=#1,y=#1,baseline=(current bounding box.center)]
\showpmcdots{0}{3}{#2}}
{\end{tikzpicture}}
\newenvironment{sdbig2}[1][7pt]
{\begin{tikzpicture} [x=#1,y=#1,baseline=(current bounding box.center)]
\showpmcdots{0}{3}{3}
\filldraw[color=white] (0,2) circle (0.2);
\filldraw[color=white] (3,2) circle (0.2);}
{\end{tikzpicture}}
\newenvironment{sd2sep}[1][7pt]
{\begin{tikzpicture} [x=#1,y=#1,baseline=(current bounding box.center)]
\showpmcdots{0}{3}{5}
\filldraw[color=white] (0,3) circle (0.2);
\filldraw[color=white] (3,3) circle (0.2);}
{\end{tikzpicture}}
\newenvironment{sd6m24}[1][7pt]
{\begin{tikzpicture} [x=#1,y=#1,baseline=(current bounding box.center)]
\showpmcdots{0}{3}{6}
\filldraw[color=white] (0,2) circle (0.2);\filldraw[color=white] (3,2) circle (0.2);
\filldraw[color=white] (0,4) circle (0.2);\filldraw[color=white] (3,4) circle (0.2);}
{\end{tikzpicture}}
\newenvironment{sd6m46}[1][7pt]
{\begin{tikzpicture} [x=#1,y=#1,baseline=(current bounding box.center)]
\showpmcdots{0}{3}{6}
\filldraw[color=white] (0,4) circle (0.2);\filldraw[color=white] (3,4) circle (0.2);
\filldraw[color=white] (0,6) circle (0.2);\filldraw[color=white] (3,6) circle (0.2);}
{\end{tikzpicture}}
\newcommand{\strandup}[2]{\draw [->] (0,#1) to [out=0,in=180] (3,#2);}
\newcommand{\stranddown}[2]{\draw [->] (3,#2) to [out=180,in=0] (0,#1);}
\newcommand{\doublehor}[2]{\draw [dashed] (0,#1) to (3,#1); \draw [dashed] (0,#2) to (3,#2);}
\newcommand{\singlehor}[1]{\draw [dashed] (0,#1) to (3,#1);}
\newenvironment{dsdn}[2][7pt]
{\begin{tikzpicture} [x=#1,y=#1,baseline=(current bounding box.center)]
\showpmcdots{0}{6}{#2}
\draw (3,0) to (3,#2) to +(0,1);}
{\end{tikzpicture}}
\newenvironment{dsdmr2}[1][7pt]
{\begin{tikzpicture} [x=#1,y=#1,baseline=(current bounding box.center)]
\showpmcdots{0}{6}{3}
\filldraw[color=white] (6,2) circle (0.2);
\draw (3,0) to (3,4);}
{\end{tikzpicture}}
\newenvironment{dsdml2}[1][7pt]
{\begin{tikzpicture} [x=#1,y=#1,baseline=(current bounding box.center)]
\showpmcdots{0}{6}{3}
\filldraw[color=white] (0,2) circle (0.2);
\draw (3,0) to (3,4);}
{\end{tikzpicture}}
\newenvironment{dsd2sep}[1][7pt]
{\begin{tikzpicture} [x=#1,y=#1,baseline=(current bounding box.center)]
\showpmcdots{0}{6}{5}
\filldraw[color=white] (0,3) circle (0.2);\filldraw[color=white] (6,3) circle (0.2);
\draw (3,0) to (3,6);}
{\end{tikzpicture}}
\newenvironment{dsdlrise}[1][7pt]
{\begin{tikzpicture} [x=#1,y=#1,baseline=(current bounding box.center)]
\showpmcdots{0}{6}{6}
\filldraw[color=white] (0,4) circle (0.2);\filldraw[color=white] (6,4) circle (0.2);
\filldraw[color=white] (0,2) circle (0.2);\filldraw[color=white] (6,6) circle (0.2);
\draw (3,0) to (3,7);}
{\end{tikzpicture}}
\newenvironment{dsdrrise}[1][7pt]
{\begin{tikzpicture} [x=#1,y=#1,baseline=(current bounding box.center)]
\showpmcdots{0}{6}{6}
\filldraw[color=white] (0,4) circle (0.2);\filldraw[color=white] (6,4) circle (0.2);
\filldraw[color=white] (0,6) circle (0.2);\filldraw[color=white] (6,2) circle (0.2);
\draw (3,0) to (3,7);}
{\end{tikzpicture}}
\newenvironment{subsd}[2]
{\begin{scope} [shift={(#1,#2)}]
\showpmcdots{3}{3}{8}
\draw (3,0) to (3,9);\draw[white](0,0) to (0,9);}
{\end{scope}}
\newenvironment{subsdn}[3]
{\begin{scope} [shift={(#1,#2)}]
\showpmcdots{0}{6}{#3}
\draw (3,0) to (3,#3) to +(0,1);}
{\end{scope}}
\newenvironment{subssdn}[3]
{\begin{scope} [shift={(#1,#2)}]
\showpmcdots{0}{3}{#3}}
{\end{scope}}
\newenvironment{subsdbig2}[2]
{\begin{scope} [shift={(#1,#2)}]
\showpmcdots{0}{3}{3}
\filldraw[color=white] (0,2) circle (0.2);
\filldraw[color=white] (3,2) circle (0.2);}
{\end{scope}}
\newenvironment{subsd6m24}[2]
{\begin{scope} [shift={(#1,#2)}]
\showpmcdots{0}{3}{6}
\filldraw[color=white] (0,2) circle (0.2);\filldraw[color=white] (3,2) circle (0.2);
\filldraw[color=white] (0,4) circle (0.2);\filldraw[color=white] (3,4) circle (0.2);}
{\end{scope}}
\newenvironment{subsd6m46}[2]
{\begin{scope} [shift={(#1,#2)}]
\showpmcdots{0}{3}{6}
\filldraw[color=white] (0,4) circle (0.2);\filldraw[color=white] (3,4) circle (0.2);
\filldraw[color=white] (0,6) circle (0.2);\filldraw[color=white] (3,6) circle (0.2);}
{\end{scope}}
\newenvironment{subdsdml2}[2]
{\begin{scope} [shift={(#1,#2)}]
\showpmcdots{0}{6}{3}
\filldraw[color=white] (0,2) circle (0.2);
\draw (3,0) to (3,4);}
{\end{scope}}
\newenvironment{subdsdmr2}[2]
{\begin{scope} [shift={(#1,#2)}]
\showpmcdots{0}{6}{3}
\filldraw[color=white] (6,2) circle (0.2);
\draw (3,0) to (3,4);}
{\end{scope}}
\newenvironment{subdsdrrise}[2]
{\begin{scope} [shift={(#1,#2)}]
\showpmcdots{0}{6}{6}
\filldraw[color=white] (0,4) circle (0.2);\filldraw[color=white] (6,4) circle (0.2);
\filldraw[color=white] (0,6) circle (0.2);\filldraw[color=white] (6,2) circle (0.2);
\draw (3,0) to (3,7);}
{\end{scope}}
\newenvironment{subdsdlrise}[2]
{\begin{scope} [shift={(#1,#2)}]
\showpmcdots{0}{6}{6}
\filldraw[color=white] (0,4) circle (0.2);\filldraw[color=white] (6,4) circle (0.2);
\filldraw[color=white] (0,2) circle (0.2);\filldraw[color=white] (6,6) circle (0.2);
\draw (3,0) to (3,7);}
{\end{scope}}
\newcommand{\rstrandup}[2]{\draw [->] (3,#1) to [out=0,in=180] (6,#2);}
\newcommand{\lstrandup}[2]{\draw [->] (0,#1) to [out=0,in=180] (3,#2);}
\newcommand{\rstranddown}[2]{\draw [->] (6,#2) to [out=180,in=0] (3,#1);}
\newcommand{\ldoublehor}[2]{\draw [dashed] (0,#1) to (3,#1);\draw [dashed] (0,#2) to (3,#2);}
\newcommand{\rdoublehor}[2]{\draw [dashed] (3,#1) to (6,#1);\draw [dashed] (3,#2) to (6,#2);}
\newcommand{\lsinglehor}[1]{\draw [dashed] (0,#1) to (3,#1);}
\newcommand{\rsinglehor}[1]{\draw [dashed] (3,#1) to (6,#1);}
\newcommand{\keypair}[3]{
\ifthenelse{\equal{#3}{left}}{\draw [dashed] (-0.5,#1) .. controls (-1.5,#1) and (-1.5,#2) .. (-0.5,#2);}{}
\ifthenelse{\equal{#3}{right}}{\draw [dashed] (6.5,#1) .. controls (7.5,#1) and (7.5,#2) .. (6.5,#2);}{}
}
\newcommand{\setarrows}[4]{
	\def\upperarrow{#1}
	\def\leftarrow{#2}
	\def\rightarrow{#3}
	\def\lowerarrow{#4}
}
\newcommand{\resetarrows}{\setarrows{$H$}{$d$}{$d$}{$H$}}
\newcommand{\canceldiag}[6]{
\begin{tikzpicture} [x=8pt,y=8pt]
\begin{subsd}{0}{14}\ifthenelse{\not\equal{#1}{#2}}{\keypair{#1}{#2}{left}}{}#3\end{subsd}
\begin{subsd}{12}{14}\ifthenelse{\not\equal{#1}{#2}}{\keypair{#1}{#2}{right}}{}#4\end{subsd}
\begin{subsd}{0}{0}\ifthenelse{\not\equal{#1}{#2}}{\keypair{#1}{#2}{left}}{}#5\end{subsd}
\begin{subsd}{12}{0}\ifthenelse{\not\equal{#1}{#2}}{\keypair{#1}{#2}{right}}{}#6\end{subsd}
\draw [->] (7,18.5) to node [above] {\upperarrow} (11,18.5);
\draw [->] (7,4.5) to node [above] {\lowerarrow} (11,4.5);
\draw [->] (3,13) to node [right] {\leftarrow} (3,10);
\draw [->] (15,13) to node [right] {\rightarrow} (15,10);
\end{tikzpicture}
}
\newcommand{\mapdiag}[5]{
\begin{tikzpicture} [x=10pt,y=10pt,baseline=(current bounding box.center)]
\begin{subsd}{0}{0}\ifthenelse{\not\equal{#1}{#2}}{\keypair{#1}{#2}{left}}{}#4\end{subsd}
\begin{subsd}{12}{0}\ifthenelse{\not\equal{#1}{#2}}{\keypair{#1}{#2}{right}}{}#5\end{subsd}
\draw [->] (7,4.5) to node [above] {#3} (11,4.5);
\end{tikzpicture}
}
\newcommand{\doublemapdiag}[7]{
\begin{tikzpicture} [x=8pt,y=8pt]
\begin{subsd}{0}{0}\ifthenelse{\not\equal{#1}{#2}}{\keypair{#1}{#2}{left}}{}#5\end{subsd}
\begin{subsd}{12}{0}#6\end{subsd}
\begin{subsd}{24}{0}\ifthenelse{\not\equal{#1}{#2}}{\keypair{#1}{#2}{right}}{}#7\end{subsd}
\draw [->] (7,4.5) to node [above] {#3} (11,4.5);
\draw [->] (19,4.5) to node [above] {#4} (23,4.5);
\end{tikzpicture}
}
\newcommand{\iz}{\mathbb{I}_\mathcal{Z}}
\newcommand{\slz}{\mathcal{Z}}
\newcommand{\sla}{\mathcal{A}}
\newcommand{\hg}{\mathbf{H}^g}
\newcommand{\cob}{\widehat{\mathrm{Cob}}}
\newcommand{\overto}[1]{\overset{#1}{\to}}
\newcommand{\hataa}{\widehat{\mathcal{AA}}}
\newcommand{\hatda}{\widehat{\mathcal{DA}}}
\newcommand{\hatdd}{\widehat{\mathcal{DD}}}
\newcommand{\widehatit}[1]{\widehat{\mathit{#1}}}
\renewcommand{\AA}{\mathit{AA}}
\newcommand{\DA}{\mathit{DA}}
\newcommand{\AD}{\mathit{DD}}
\newcommand{\DD}{\mathit{DD}}
\newcommand{\opp}{\mathrm{opp}}
\newcommand{\midarrow}{\tikz \draw[-triangle 60] (0,0) -- +(.1,0);}
\newenvironment{threebase}{
\begin{tikzpicture} [x=15,y=15,baseline=(current bounding box.center)]
\draw [line width=2pt] (-1.2,-0.2) -- (-0.8,0.2);
\draw [line width=2pt] (0.8,0.2) -- (1.2,-0.2);
\draw [line width=2pt] (-0.3,-1.6) -- (0.3,-1.6);}
{\end{tikzpicture}}
\newenvironment{fourbase}{
\begin{tikzpicture} [x=15,y=15,baseline=(current bounding box.center)]
\draw [line width=2pt] (-0.3,1) -- (0.3,1);
\draw [line width=2pt] (-0.3,-1) -- (0.3,-1);
\draw [line width=2pt] (-1,-0.3) -- (-1,0.3);
\draw [line width=2pt] (1,-0.3) -- (1,0.3);}
{\end{tikzpicture}}
\newenvironment{dsd3}{\begin{dsdn}{3}}{\end{dsdn}}
\newcounter{daeq}
\renewcommand*{\thedaeq}{DA\arabic{daeq}}
\def\@equationname{equation}
\newenvironment{daequation}[1]{
  \def\mymathenvironmenttouse{#1}
  \ifx\mymathenvironmenttouse\@equationname
  \refstepcounter{daeq}
  \else
  \patchcmd{\@arrayparboxrestore}{equation}{daeq}{}{}
  \patchcmd{\print@eqnum}{equation}{daeq}{}{}
  \patchcmd{\incr@eqnum}{equation}{daeq}{}{}
  \fi
  \csname\mymathenvironmenttouse\endcsname%
}{
  \ifx\mymathenvironmenttouse\@equationname%
  \tag{\thedaeq}
  \fi
  \csname end\mymathenvironmenttouse\endcsname%
}
\newcounter{ddeq}
\renewcommand*{\theddeq}{DD\arabic{ddeq}}
\def\@equationname{equation}
\newenvironment{ddequation}[1]{
  \def\mymathenvironmenttouse{#1}
  \ifx\mymathenvironmenttouse\@equationname
  \refstepcounter{ddeq}
  \else
  \patchcmd{\@arrayparboxrestore}{equation}{ddeq}{}{}
  \patchcmd{\print@eqnum}{equation}{ddeq}{}{}
  \patchcmd{\incr@eqnum}{equation}{ddeq}{}{}
  \fi
  \csname\mymathenvironmenttouse\endcsname%
}{
  \ifx\mymathenvironmenttouse\@equationname%
  \tag{\theddeq}
  \fi
  \csname end\mymathenvironmenttouse\endcsname%
}
\newcounter{aaeq}
\renewcommand*{\theaaeq}{AA\arabic{aaeq}}
\def\@equationname{equation}
\newenvironment{aaequation}[1]{
  \def\mymathenvironmenttouse{#1}
  \ifx\mymathenvironmenttouse\@equationname
  \refstepcounter{aaeq}
  \else
  \patchcmd{\@arrayparboxrestore}{equation}{aaeq}{}{}
  \patchcmd{\print@eqnum}{equation}{aaeq}{}{}
  \patchcmd{\incr@eqnum}{equation}{aaeq}{}{}
  \fi
  \csname\mymathenvironmenttouse\endcsname%
}{
  \ifx\mymathenvironmenttouse\@equationname%
  \tag{\theaaeq}
  \fi
  \csname end\mymathenvironmenttouse\endcsname%
}
\begin{document}

\title{Combinatorial Proofs in Bordered Heegaard Floer homology}
\author{Bohua Zhan}
\date{January 24, 2016}
\maketitle
\begin{abstract}
  Using bordered Floer theory, we give a combinatorial construction and proof of
  invariance for the hat version of Heegaard Floer homology. As part of the
  proof, we also establish combinatorially the invariance of the
  linear-categorical representation of the strongly-based mapping class groupoid
  given by the same theory.
\end{abstract}

\section{Introduction}\label{sec:intro}

Heegaard Floer homology, introduced by Ozsv\'ath and Szab\'o in \cite{OS04a} and
\cite{OS04b}, gives several kinds of invariants for closed 3-manifolds. The
invariants are defined using holomorphic curves, so in general they are not
directly computable from the definitions. However, for the hat version of the
invariant (denoted $\widehatit{HF}$), there are ways to give combinatorial
definitions. There are two steps in this process. First, we want to give, to a
particular kind of description of a 3-manifold (such as a Heegaard splitting), a
description of $\widehatit{HF}$ associated to that 3-manifold. This means that,
at least in principle, the invariant can be computed algorithmically for any
3-manifold. Second, we want to give combinatorial proofs for the main properties
of $\widehatit{HF}$, beginning with the statement that it depends only on the
diffeomorphism class of the 3-manifold, rather than on a particular description
of it.

Bordered Floer theory gives a way to extend the hat version of Heegaard Floer
homology to 3-manifolds with one or two boundary components. The theory is also
defined using holomorphic curves. However, some of the invariants associated to
certain simple types of 3-manifolds with boundary have been computed. By
breaking an arbitrary closed 3-manifold into simpler pieces, the theory gives a
combinatorial description of $\widehatit{HF}$ \cite{LOT10c}, achieving the first
step in the process described above.

In this paper, we give the second step of the process, namely prove
combinatorially that the construction of $\widehatit{HF}$ given by bordered
Floer theory in fact produces an invariant of the 3-manifold. One main result we
use is an alternative description of $\widehatit{CFAA}(\iz)$ given in
\cite{BZ1}. This allows us to use a combinatorial construction that is easier to
reason about.

An intermediate statement in the proof, which may be of independent interest, is
that bordered Floer theory gives a linear-categorical representation of the
strongly-based mapping class groupoid (which contains the strongly-based mapping
class group). By a linear-categorical representation of a group or groupoid, we
mean assigning homotopy equivalence classes of bimodules to each element of the
group (resp. groupoid), in such a way that composition in the group
(resp. groupoid) corresponds to taking an appropriate tensor product of
bimodules.

Sarkar and Wang gave in \cite{SW} the first combinatorial description of
$\widehatit{HF}$, by giving a systematic way to convert any Heegaard diagram
into a nice diagram, in which case counting holomorphic curves is
combinatorial. Ozsv\'ath, Stipsicz, and Szab\'o gave in \cite{OSS} the first
combinatorial proof of invariance for $\widehatit{HF}$, using another way to
convert general Heegaard diagrams into \emph{convenient} diagrams -- a more
restricted kind of nice diagrams, and by studying Heegaard moves on convenient
diagrams.

Linear-categorical representations of important groups in topology have also
been investigated before. Bordered Floer theory actually gives a family of
representations of the strongly-based mapping class groupoid. For a given genus
$g$, the representations are indexed by an integer $w$, called the weight,
between $-g$ and $g$. The representation that is relevant for 3-manifold
invariants, and which we will focus on in this paper, corresponds to $w=0$. The
cases $w=\pm g$ are trivial. The cases $w=\pm (g-1)$ are described
combinatorially by Lipshitz, Ozsv\'ath, and Thurston in \cite{LOT10e}, and a
combinatorial proof of invariance is given by Siegel in
\cite{Siegel11}. Linear-categorical representations of other groups occuring in
topology have also been studied. See the introduction in \cite{KT07} for a
review and a list of references. One major example is linear-categorical
representations of the braid group, studied in, for example, \cite{KS},
\cite{ST01}, \cite{CK08}, \cite{SS06}, and \cite{K02}.

We now give an overview of this paper. In Section \ref{sec:combconstr}, we
review the structure of bordered Floer theory, and describe the combinatorial
construction of it considered here. In Section \ref{sec:computationda}, we prove
some preliminary results on type $\DA$ bimodules and our construction of the
type $\DA$ invariants. Using these results, we prove in Section \ref{sec:relmcg}
the intermediate statement on the linear-categorical representation of the
strongly-based mapping class groupoid. Finally, we complete the proof of
invariance for closed 3-manifolds in Section \ref{sec:3manifold}.

\subsection{Acknowledgements}
I would like to thank Peter Ozsv\'ath for offering the ideas which led to this
paper, and to Zolt\'an Szab\'o and Robert Lipshitz for many suggestions. I also
want to thank the Simons Center for Geometry and Physics for their hospitality
when part of this work is done. Finally, I thank the referee for many helpful
comments.

\section{Overview of the construction}\label{sec:combconstr}

In the first part of this section, we briefly review the structure of bordered
Floer theory, as is defined analytically in \cite{LOT08} and \cite{LOT10a}. In
the second part, we describe some of the existing combinatorial constructions
given in \cite{LOT10c}, and then the construction that will be studied in this
paper.

\subsection{Pointed matched circles and strand algebras}
\label{sec:intropmc}

In bordered Floer theory, the connected, compact, orientable surfaces that serve
as boundary components of 3-manifolds are specified using pointed matched
circles. A pointed matched circle is a quadruple $\slz=(Z,z,\mathbf{a},M)$,
consisting of a circle $Z$, a point $z\in Z$, a set of $4k$ points
$\mathbf{a}\subset Z\setminus \{z\}$, and a two-to-one map $M$ from $\mathbf{a}$
to $\{1,2,\dots,2k\}$, pairing the points in $\mathbf{a}$, that satisfies the
following condition: if we thicken the circle $Z$ to an annulus $Z\times [0,1]$
and attach a 1-handle to the outside boundary $Z\times \{1\}$ of the annulus
joining each pair of points in $\mathbf{a}$, then the new outside boundary must
be a single circle. Given this requirement, we may glue a disk onto that
boundary, obtaining a genus $k$ surface $F^\circ(\slz)$ with one boundary
component $Z\times \{0\}$ and a basepoint $z$ on the boundary. We say that the
pointed matched circle $\slz$ \emph{parametrizes} $F^\circ(\slz)$. Let $-\slz$
be the pointed matched circle obtained by reversing orientation on $\slz$. Then
$F^\circ(-\slz)$ is the orientation reversal of $F^\circ(\slz)$.

Let $F(\slz)$ be the result of filling the boundary of $F^\circ(\slz)$ with a
disk. Then $F(\slz)$ is a closed surface of genus $k$, marked with a
homotopically trivial circle $Z$ and a basepoint $z\in Z$. We will also say
$F(\slz)$ is parametrized by $\slz$.

An example of a pointed matched circle for $k=2$ is shown in Figure
\ref{fig:linearpmc}.

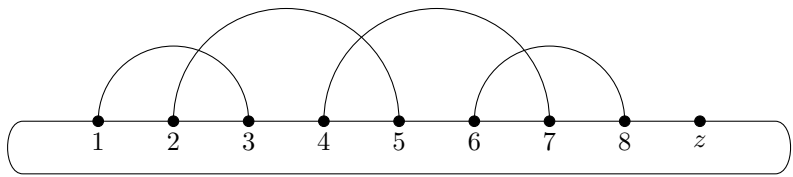
\begin{figure}[h!tb] \centering
  \begin{tikzpicture}
    \draw [-] (0,0) to (10,0) to [out=0,in=0] (10,-0.7) to
    (0,-0.7) to [out=180,in=180] (0,0);
    \foreach \x in {1,2,...,9} {
      \filldraw (\x,0) circle (2pt);
      \ifnum \x<9
      \draw (\x,0) node[below=1pt] {$\x$};
      \else
      \draw (\x,0) node[below=2pt] {$z$};
      \fi
    }
    \draw (3,0) arc (0:180:1);
    \draw (5,0) arc (0:180:1.5);
    \draw (7,0) arc (0:180:1.5);
    \draw (8,0) arc (0:180:1);
  \end{tikzpicture}
  \caption{Linear pointed matched circle with $k=2$.}
  \label{fig:linearpmc}
\end{figure}

To each pointed matched circle $\slz$, bordered Floer theory associates a
combinatorially defined dg-algebra $\sla(\slz)$. We refer to the original papers
for the description of $\sla(\slz)$. Here we just fix some notations and
terminologies used in this paper. For any generator $a\in\sla(\slz)$, the
\emph{multiplicity} of $a$, denoted $\mathrm{mult}(a)$, is an element in
$H_1(Z\setminus\{z\},\mathbf{a})$, recording how many times the strands in $a$
cover each non-basepoint interval on $Z$. The \emph{length} of $a$ is the sum of
coefficients in $\mathrm{mult}(a)$. Equivalently, it is the sum of lengths of
strands in $a$. It is clear from the definitions that the algebra $\sla(-\slz)$
is the opposite algebra of $\sla(\slz)$. In particular, there is a canonical
identification of their generators. For any $a\in\sla(\slz)$, let $\overline{a}$
denote the corresponding element in $\sla(-\slz)$. If $i\in\sla(\slz)$ is an
idempotent, let $o(i)\in\sla(\slz)$ denote the idempotent complementary to
$i$. A \emph{chord} is a single strand on $Z$. For any given chord $\xi$, we
define $a(\xi)\in\sla(\slz)$ to be the sum of all generators that result from
adding horizontal strands to $\xi$.

Given a 3-manifold $Y$ with one boundary component $\partial Y$, a
parametrization of $\partial Y$ by a pointed matched circle $\slz = (Z, z,
\mathbf{a}, M)$ is a diffeomorphism $\phi:F(\slz)\to\partial Y$. This marks
$\partial Y$ with a circle and a basepoint on the circle, which by abuse of
notation we will also call $Z$ and $z$. Bordered Floer theory associates two
invariants to a 3-manifold $Y$ with boundary $\partial Y$ parametrized by
$\slz$:
\begin{itemize}
\item Type $A$ invariant $\widehatit{CFA}(Y)_{\sla(\slz)}$, a right
  $A_\infty$-module over $\sla(\slz)$.
\item Type $D$ invariant $^{\sla(-\slz)}\widehatit{CFD}(Y)$, a left type $D$
  module over $\sla(-\slz)$.
\end{itemize}

They are invariants of $Y$ up to homotopy equivalence of $A_\infty$-modules or
type $D$ modules. We use the following standard convention in expressing types
of actions on the module: each algebra is written on the side it acts on,
subscripts indicate $A_\infty$-actions, and superscripts indicate type $D$
actions. These may be omitted when there is no danger of confusion.

These invariants satisfy the following pairing theorem: let $Y_1$ and $Y_2$ be
two 3-manifolds with boundaries parametrized by $\slz$ and $-\slz$,
respectively. Let $Y=Y_1\cup_\partial Y_2$ be the closed 3-manifold obtained by
gluing them along their boundaries (with the gluing map induced by the
parametrizations). Then the chain complex $\widehatit{CF}(Y)$ (whose homology is
$\widehatit{HF}(Y)$), is given by

\begin{equation} \label{eq:pairingtheorem} \widehatit{CF}(Y) \simeq
  \widehatit{CFA}(Y_1)_{\sla(\slz)} \boxtimes~^{\sla(\slz)}\widehatit{CFD}(Y_2),
\end{equation}
(\cite[Theorem 1.3]{LOT08}).

The theory extends to 3-manifolds with two boundary components as follows. Given
$Y$ with two boundary components $\partial_L Y$ and $\partial_R Y$. Fix
parametrizations $\phi_1:F(\slz_1)\to \partial_L Y$ and
$\phi_2:F(\slz_2)\to \partial_R Y$. This induces circles $Z_1$ and $Z_2$ on
$\partial_L Y$ and $\partial_R Y$, and basepoints $z_1\in Z_1,z_2\in Z_2$. We
further fix a map $\gamma$ from the framed cylinder $(S^1,z)\times [0,1]$ into
$Y$, so that $(S^1,z)\times\{0\}$ and $(S^1,z)\times\{1\}$ map to $(Z_1,z_1)$
and $(Z_2,z_2)$, respectively. We call the totality of the data $(Y,\partial_L
Y,\partial_R Y,\phi_1,\phi_2,\gamma)$ a \emph{strongly bordered 3-manifold with
  two boundary components}. From now on whenever we mention a 3-manifold $Y$
with two boundary components, we mean a strongly bordered 3-manifold, omitting
the other data when they are clear from context. To a 3-manifold $Y$ with two
boundary components, bordered Floer theory associates the following invariants:
\begin{itemize}
\item Type $\AA$ invariant $\widehatit{CFAA}(Y)_{\sla(\slz_1),\sla(\slz_2)}$, a
  right $A_\infty$-bimodule over $\sla(\slz_1)$ and $\sla(\slz_2)$.
\item Type $\DD$ invariant $^{\sla(-\slz_1),\sla(-\slz_2)}\widehatit{CFDD}(Y)$, a
  left type $D$ bimodule over $\sla(-\slz_1)$ and $\sla(-\slz_2)$.
\item Type $\DA$ invariant $^{\sla(-\slz_1)}\widehatit{CFDA}(Y)_{\sla(\slz_2)}$, a
  left type $D$, right $A_\infty$-bimodule over $\sla(-\slz_1)$ and
  $\sla(\slz_2)$.
\item Type $\AD$ invariant $^{\sla(-\slz_2)}\widehatit{CFAD}(Y)_{\sla(\slz_1)}$, a
  right $A_\infty$, left type $D$ bimodule over $\sla(\slz_1)$ and
  $\sla(-\slz_2)$.
\end{itemize}

These bimodules satisfy similar pairing theorems, as described in \cite[Section
7.1]{LOT10a}. The general rule is that box tensor product can be taken between a
right $A_\infty$-action and a left type $D$ action over the same algebra
$\sla(\slz)$. Taking this box tensor product corresponds to gluing two
boundaries parametrized by $\slz$ and $-\slz$.

Following the convention in \cite{BZ1}, we will write actions on the various
kinds of modules and bimodules as sums of \emph{arrows}. For example, if the
coefficient of $\mathbf{y}$ is 1 in
$m_{1,i,j}(\mathbf{x};a_1,\dots,a_i;b_1,\dots,b_j)$, where each $a_k, 1\le k\le
i$ and $b_l, 1\le l\le j$ is a generator of the appropriate algebra, we say
there is an arrow $m_{1,i,j}: (\mathbf{x};a_1,\dots,a_i;b_1,\dots,b_j)\to
\mathbf{y}$. Likewise, an arrow in the type $\DA$ action is of the form
$\delta^1_{1+i}: (\mathbf{x},a_1,\dots,a_i)\to b\otimes\mathbf{y}$, and an arrow
in the type $\DD$ action is of the form $\delta^1: \mathbf{x}\to a\otimes
b\otimes\mathbf{y}$.

We will also need the concept of \emph{dual} on bimodules. This is called
opposite structures in \cite[Definition 2.2.31, 2.2.53]{LOT10a}. For a left type
$\DD$ bimodule $^{A,B}M$, its dual $\overline{M}^{A,B}$ is the type $\DD$
bimodule over the same generators, where each arrow $\delta^1_M: \mathbf{x}\to
a_1\otimes a_2\otimes\mathbf{y}$ in the type $\DD$ action of $^{A,B}M$
corresponds to an arrow $\delta^1_{\overline{M}}: \mathbf{y}\to a_1\otimes
a_2\otimes\mathbf{x}$ in the type $\DD$ action of $\overline{M}^{A,B}$. So the
left actions by $A$ and $B$ become right actions, or equivalently left actions
by $A^{\opp}$ and $B^{\opp}$. So we will also write the dual as
$^{A^{\opp},B^{\opp}}\overline{M}$. Similarly, we can define duals on type $\DA$
and type $\AA$ bimodules. The dual commutes with box tensor product. That is:
\[ \overline{M_A \boxtimes \tensor[^A]{N}{}} = \overline{N}^A \boxtimes
\tensor[_A]{\overline{M}}{} = \overline{M}_{A^{\opp}} \boxtimes
\tensor[^{A^{\opp}}]{\overline{N}}{}, \] where $M$ and $N$ may have additional
actions.

\subsection{Gradings on bordered invariants}

In this section, we give a brief overview of gradings on the bordered
invariants. For details, see \cite[Chapter 10]{LOT08} and \cite[Section
6.5]{LOT10a}.

We begin with gradings on the dg-algebra $\sla(\slz)$. There are two kinds of
gradings, by a larger group $G'(\slz)$ and a ``refined'' grading by a smaller
group $G(\slz)$. Both $G(\slz)$ and $G'(\slz)$ are non-commutative, equipped
with a distinguished central element $\lambda$.

An element of $G'(\slz)$ is specified by a pair $(k,\alpha)$, where
$k\in\frac{1}{2}\mathbb{Z}$ and $\alpha\in H_1(Z', \mathbf{a})$. With points of
$\mathbf{a}$ labeled $1, \dots, 4k$, we can write $\alpha$ as a sequence of
integers $\alpha_i, 1\le i\le 4k-1$, where $\alpha_i$ is the multiplicity of
$\alpha$ at the interval $[i, i+1]$. Then multiplication on $G'(\slz)$ is
defined by
\[ (k, \alpha)\cdot (l, \beta) = (k + l + L(\alpha, \beta), \alpha + \beta), \]
where
\[ L(\alpha, \beta) = \sum_{i=1}^{4k-2} \frac{1}{2}(\alpha_i \beta_{i+1} -
\alpha_{i+1}\beta_i). \]

Actually, the grading lies in an index 2 subgroup of $G'(\slz)$, but we will not
make use of this here.

For later use, we define an anti-homomorphism
\[ R : G'(\slz) \to G'(-\slz) \]
given by
\[ R(k, \alpha_1, \dots, \alpha_{4k-1}) = (k, -\alpha_{4k-1}, \dots,
-\alpha_1). \]

To define the grading of a generator of $\sla(\slz)$, we first define a map
\[ m : H_1(Z', \mathbf{a}) \times H_0(\mathbf{a}) \to \frac{1}{2}\mathbb{Z}. \]
For an interval $\alpha$ (with orientation from $Z$) and a point $p$, let
$m(\alpha, p)=1$ if $p$ is in the interior of $\alpha$, $\frac{1}{2}$ if $p$ is
on the boundary, and 0 otherwise. This is then extended bilinearly to all of
$H_1(Z',\mathbf{a})\times H_0(\mathbf{a})$ to define $m$.

Given a generator $a\in\sla(\slz)$, let $\bm{\rho}$ be the non-horizontal
strands of $a$. Let $\mathrm{inv}(\bm{\rho})$ be the number of inversions in
$\bm{\rho}$, $S\in H_0(\mathbf{a})$ be the starting points of $\bm{\rho}$, and
$[a]\in H_1(Z',\mathbf{a})$ be the multiplicity of $a$. Then
\[ \mathrm{gr}'(a) = (\mathrm{inv}(\bm{\rho}) - m([a], S), [a]). \]

Next, we consider relative gradings on the type $D$ invariant. Fix a bordered
Heegaard diagram $\mathcal{H}$. Let $\mathbf{x}, \mathbf{y}$ be generators and
$B\in\pi_2(\mathbf{x}, \mathbf{y})$, define $g'(B)\in G'(\slz)$ as
\[ g'(B) = (-e(B) - n_{\mathbf{x}}(B) -
n_{\mathbf{y}}(B), \partial^\partial(B)). \] Here $e(B)$ is the Euler measure of
$B$, and $n_{\mathbf{x}}(B), n_{\mathbf{y}}(B)$ are multiplicities of $B$ at
$\mathbf{x}, \mathbf{y}$ (each corner around $\mathbf{x}$ or $\mathbf{y}$ counts
as multiplicity $\frac{1}{4}$), and $\partial^\partial(B)$ is the boundary of
$B$ on $H_1(Z',\mathbf{a})$.

There is a grading set for each spin$^c$ class on $\mathcal{H}$ (in most
bordered cases we consider here, there is just one spin$^c$ class). The grading
set $S'_D(\mathcal{H},\mathfrak{s})$ for the Heegaard diagram $\mathcal{H}$ and
spin$^c$ class $\mathfrak{s}$ is defined as follows. Choose a base generator
$\mathbf{x_0}$ with spin$^c$ class $\mathfrak{s}$. Let $P'(\mathbf{x_0})$ be the
set of $g'(P)$ for all $P\in\pi_2(\mathbf{x_0}, \mathbf{x_0})$ (the domains in
$\pi_2(\mathbf{x_0}, \mathbf{x_0})$ are called \emph{periodic domains}). Then
\[ S'_D(\mathcal{H},\mathfrak{s}) = G'(-\slz) / R(P'(\mathbf{x_0})). \]

This grading set has an obvious left action by $G'(\slz)$. For another generator
$\mathbf{x}$ in the same spin$^c$ class, choose a domain
$B_0\in\pi_2(\mathbf{x_0}, \mathbf{x})$, and set
\[ \mathrm{gr}'(\mathbf{x}) = R(g'(B_0)) \cdot R(P'(\mathbf{x_0})). \]

The type $D$ action respects this relative grading in the sense that, for each
arrow $\delta^1: \mathbf{x}\to a\otimes\mathbf{y}$ in the action, we have
\[ \lambda^{-1}\mathrm{gr}'(\mathbf{x}) =
\mathrm{gr}'(a)\mathrm{gr}'(\mathbf{y}). \]

Relative gradings on type $A$ invariants are similar. The grading set is
\[ S'_A(\mathcal{H},\mathfrak{s}) = P'(\mathbf{x_0}) \backslash G'(\slz). \]
This carries a natural right action by $G'(\slz)$. For any generator
$\mathbf{x}$ in the spin$^c$ class $\mathfrak{s}$, choose a domain
$B_0\in\pi_2(\mathbf{x_0}, \mathbf{x})$ and set
\[ \mathrm{gr}'(\mathbf{x}) = P'(\mathbf{x_0}) \cdot g'(B_0). \]

The $A_\infty$-action respects the relative grading in the sense that, for each
arrow
\[ m_{1,k}: (\mathbf{x}; a_1, \dots, a_k) \to \mathbf{y}, \] we have
\[ \lambda^{k-1}\mathrm{gr}'(\mathbf{x})\mathrm{gr}'(a_1)\cdots\mathrm{gr}'(a_k)
= \mathrm{gr}'(\mathbf{y}). \]

Gradings on bimodules are defined similarly. In particular, a domain in a
Heegaard diagram with two boundary components parametrized by $\slz_1$ and
$\slz_2$ gives rise to an element of $G'(\slz_1)\times_\lambda
G'(\slz_2)=G'(\slz_1)\times G'(\slz_2) / (\lambda_1=\lambda_2)$. The grading set
is a certain coset of $G'(\slz_1)\times_\lambda G'(\slz_2)$.

Now we briefly discuss refined gradings, which contain essentially the same
information, but are cleaner to work with theoretically.

The group $G(\slz)$ can be considered as a subgroup of $G'(\slz)$, generated by
$\lambda$ and elements of the form $(0, [p, q])$, where $p, q$ is a pair of
matched points, and $[p, q]$ denotes the interval in $H_1(Z', \mathbf{a})$
between $p$ and $q$. An element $(k,\alpha)$ of $G'(\slz)$ is in $G(\slz)$ if
and only if $M_*(\partial\alpha)=0$ where $\partial: H_1(Z',\mathbf{a})\to
H_0(\mathbf{a})$ is the boundary operator and
$M_*:H_0(\mathbf{a})\to\mathbb{Z}^{2k}$ is a map sending each matched pair of
points to the same basis element of $\mathbb{Z}^{2k}$.

To construct the refined grading on $\sla(\slz)$, we first choose a base
idempotent $s_0$ in $\sla(\slz)$. Then for every idempotent $s$, choose a
grading element $\psi(s)=(k,\alpha)\in G'(\slz)$ such that
$M_*(\partial\alpha)=s-s_0$. For an algebra element $a$ with left idempotent $s$
and right idempotent $t$, we set
\[ \mathrm{gr}(a)=\psi(s)\mathrm{gr}'(a)\psi(t)^{-1}. \] It is easy to check
that this element lies in $G(\slz)$ and that the two conditions on the grading
are satisfied.

Similarly, we can ``refine'' the grading on the bordered invariants to use
$G(\slz)$ rather than $G'(\slz)$. We will omit the details here.

We will not perform any detailed grading computations in this paper, but will
simply note that all such computations can be done combinatorially from the
Heegaard diagram. For a module (or bimodule) $M$ of any type, grading imposes a
constraint on what kind of arrows can appear in the $A_\infty$ or type $D$
action on $M$. One such constraint is as follows: if a domain in a Heegaard
diagram with two boundary components touches the two boundaries at intervals $i$
and $i'$, respectively, then for each arrow in the algebra action of a bimodule
corresponding to that Heegaard diagram, its multiplicities at $i$ and at $i'$
must be the same. Such constraints are crucial in establishing uniqueness
properties of bimodule invariants, to be discussed in the following sections.

\subsection{The strongly-based mapping class groupoid}

An important class of 3-manifolds with two boundary components is the mapping
cylinders of surface diffeomorphisms. Gluing with these 3-manifolds can be
considered as ``changing the parametrization'' on the boundary of a bordered
3-manifold.

The strongly-based mapping class groupoid of genus $g$ is a category whose
objects are pointed matched circles with $4g$ points. Each object $\slz$
corresponds to a surface $F^\circ(\slz)$ of genus $g$, with standard
parametrization by $\slz$. The morphisms from $\slz_1$ to $\slz_2$ in the
category are isotopy classes of diffeomorphisms $\phi:F^\circ(\slz_1)\to
F^\circ(\slz_2)$, sending the basepoint $z_1\in F^\circ(\slz_1)$ to the
basepoint $z_2\in F^\circ(\slz_2)$. Identity and composition in the category
correspond to the identity diffeomorphism and composition of diffeomorphisms,
respectively.

If we fix a pointed matched circle $\slz$ and only consider morphisms from
$\slz$ to itself, we obtain the strongly-based mapping class group of $F_{g,1}$
(where $F_{g,1}$ denotes a genus $g$ surface with one circle boundary). This is
simply the group of isotopy classes of boundary-preserving self-diffeomorphisms
of $F_{g,1}$.

Given a diffeomorphism $\phi: F^\circ(\slz_1)\to F^\circ(\slz_2)$, we can
construct its mapping cylinder $Y(\phi)=F(\slz_2)\times [0,1]$ as a strongly
bordered 3-manifold with two boundary components, parametrized by $-\slz_1$ and
$\slz_2$. The left boundary $\partial_LY(\phi)=F(\slz_2)\times\{0\}$ is
parametrized by the induced map $\phi_*: -F(\slz_1)\to -F(\slz_2)$ (reverse
orientation and extend over the disk filling the boundary), while the right
boundary is parametrized by the identity map on $F(\slz_2)$. The map
$\gamma:(S^1,z)\times[0,1]\to Y(\phi)$ is simply the inclusion $(Z,z)\times
[0,1] \to F(\slz_2)\times [0,1]$.

This establishes an one-to-one correspondence between strongly bordered
3-manifolds that are topologically $F_g\times [0,1]$, and morphisms in the
strongly-based mapping class groupoid with genus $g$. For a morphism
$\phi:F^\circ(\slz_1)\to F^\circ(\slz_2)$, we denote
$\widehatit{CFAA}(\phi)_{\sla(-\slz_1),\sla(\slz_2)}$ to be the type $\AA$
invariant $\widehatit{CFAA}(Y(\phi))_{\sla(-\slz_1),\sla(\slz_2)}$ associated to
the mapping cylinder of $\phi$. Likewise we have notations
$^{\sla(\slz_1)}\widehatit{CFDA}(\phi)_{\sla(\slz_2)}$ and
$^{\sla(\slz_1),\sla(-\slz_2)}\widehatit{CFDD}(\phi)$ for the other invariants
corresponding to $Y(\phi)$.

For future reference, we write down the pairing theorems involving $\DA$
invariants. For morphisms $\phi_1:F^\circ(\slz_1)\to F^\circ(\slz_2)$ and
$\phi_2:F^\circ(\slz_2)\to F^\circ(\slz_3)$, the $\DA$ invariant for
$\phi_2\circ\phi_1:F^\circ(\slz_1)\to F^\circ(\slz_3)$ is given by:
\begin{equation} \label{eq:da-mcg-pairing}
  ^{\sla(\slz_1)}\widehatit{CFDA}(\phi_2\circ\phi_1)_{\sla(\slz_3)} =
  \tensor[^{\sla(\slz_1)}]{\widehatit{CFDA}(\phi_1)}{_{\sla(\slz_2)}} \boxtimes
  \tensor[^{\sla(\slz_2)}]{\widehatit{CFDA}(\phi_2)}{_{\sla(\slz_3)}}.
\end{equation}
For a morphism $\phi:F^\circ(\slz_1)\to F^\circ(\slz_2)$ and a 3-manifold $Y$
with boundary parametrized by $\psi:F(-\slz_2)\to\partial Y$, let $Y'$ be the
same manifold with boundary parametrized by
$\psi\circ\phi_*:F(-\slz_1)\to\partial Y$, then
\begin{equation} \label{eq:d-mcg-pairing}
  ^{\sla(\slz_1)}\widehatit{CFD}(Y') =
  \tensor[^{\sla(\slz_1)}]{\widehatit{CFDA}(\phi)}{_{\sla(\slz_2)}} \boxtimes
  \tensor[^{\sla(\slz_2)}]{\widehatit{CFD}(Y)}{}.
\end{equation}

\subsection{Invariants of the identity diffeomorphism}

Let $\iz$ be the identity morphism $F^\circ(\slz)\to F^\circ(\slz)$. All
bimodule invariants associated to $\iz$ have special significance in the
theory. First, it can be shown that (\cite[Section 8.1]{LOT10a}):
\[ \widehatit{CFDA}(\iz) \simeq
\tensor[^{\sla(\slz)}]{\mathbb{I}}{_{\sla(\slz)}}, \] where the latter denotes
the identity type $\DA$ bimodule over $\sla(\slz)$. This is the bimodule
generated over $\mathbb{F}_2$ by idempotents of $\sla(\slz)$, and with the
algebra action given by:
\[ \delta_2^1(i, a) = a\otimes j, \] for any generator $a\in\sla(\slz)$, where
$i$ and $j$ are the left and right idempotents of $a$.

The type $\DD$ invariant $^{\sla(\slz),\sla(-\slz)}\widehatit{CFDD}(\iz)$ and type
$\AA$ invariant $\widehatit{CFAA}(\iz)_{\sla(-\slz),\sla(\slz)}$ relate the type
$A$ and type $D$ invariants through taking the tensor product. For any
3-manifold $Y$ with one boundary component parametrized by $\slz$, the relations
are:
\begin{eqnarray}
  \widehatit{CFA}(Y)_{\sla(\slz)} =
  \widehatit{CFAA}(\iz)_{\sla(-\slz),\sla(\slz)} \boxtimes
  \tensor*[^{\sla(-\slz)}]{\widehatit{CFD}(Y)}{} \\
  ^{\sla(-\slz)}\widehatit{CFD}(Y) =
  \widehatit{CFA}(Y)_{\sla(\slz)} \boxtimes
  \tensor*[^{\sla(\slz),\sla(-\slz)}]{\widehatit{CFDD}(\iz)}{}.
\end{eqnarray}

One implication is that $\widehatit{CFD}(Y)$ and $\widehatit{CFA}(Y)$ contain the
same information about $Y$. Likewise, there are relations among the bimodule
invariants, showing that all bimodule invariants also contain the same
information. For any 3-manifold $Y$ with two boundary components parametrized by
$\slz_1$ and $\slz_2$:
\begin{eqnarray}
  ^{\sla(-\slz_1)}\widehatit{CFDA}(Y)_{\sla(\slz_2)} =
  \widehatit{CFAA}(\mathbb{I}_{\slz_2})_{\sla(-\slz_2),\sla(\slz_2)}
  \boxtimes_{\sla(-\slz_2)}
  \tensor*[^{\sla(-\slz_1),\sla(-\slz_2)}]{\widehatit{CFDD}(Y)}{} \label{eq:dafromddaa} \\
  \widehatit{CFAA}(Y)_{\sla(\slz_1),\sla(\slz_2)} =
  \widehatit{CFAA}(\mathbb{I}_{\slz_1})_{\sla(-\slz_1),\sla(\slz_1)}
  \boxtimes_{\sla(-\slz_1)}
  \tensor[^{\sla(-\slz_1)}]{\widehatit{CFDA}(Y)}{_{\sla(\slz_2)}} \\
  \tensor[^{\sla(-\slz_1)}]{\widehatit{CFDA}(Y)}{_{\sla(\slz_2)}} =
  \widehatit{CFAA}(Y)_{\sla(\slz_1),\sla(\slz_2)} \boxtimes_{\sla(\slz_1)}
  \tensor*[^{\sla(\slz_1),\sla(-\slz_1)}]{\widehatit{CFDD}(\mathbb{I}_{\slz_1})}{}
  \\
  \tensor*[^{\sla(-\slz_1),\sla(-\slz_2)}]{\widehatit{CFDD}(Y)}{} =
  \tensor[^{\sla(-\slz_1)}]{\widehatit{CFDA}(Y)}{_{\sla(\slz_2)}}
  \boxtimes_{\sla(\slz_2)}
  \tensor*[^{\sla(\slz_2),\sla(-\slz_2)}]{\widehatit{CFDD}(\mathbb{I}_{\slz_2})}{}.
\end{eqnarray}

The above equations are special cases of the pairing theorems (where one of the
bordered 3-manifolds is a cylinder with trivial parametrization). It indicates
the importance of finding combinatorial descriptions of $\widehatit{CFDD}(\iz)$
and $\widehatit{CFAA}(\iz)$, which we will now consider.

First, we describe the combinatorial model $\hatdd(\iz)$ of
$^{\sla(\slz),\sla(-\slz)}\widehatit{CFDD}(\iz)$, given in \cite[Theorem
1]{LOT10c}. It is generated over $\mathbb{F}_2$ by the set of pairs of
complementary idempotents $i\otimes i'$, with $i\in\sla(\slz)$ and
$i'=\overline{o(i)}\in\sla(-\slz)$. The type $\DD$ action is given by:

\begin{equation} \label{eq:ddop} \delta^1(i\otimes i') =
  \sum_{\xi\in\mathcal{C}, ia(\xi)=a(\xi)j,
    i'\overline{a(\xi)}=\overline{a(\xi)}j'} (a(\xi) \otimes \overline{a(\xi)})
  \otimes (j\otimes j'),
\end{equation}
where $\mathcal{C}$ is the set of chords on $\mathcal{Z}$ whose two endpoints
are not matched. Intuitively, the arrows in the type $\DD$ action are exactly
those whose two algebra outputs both contain exactly one chord connecting two
unpaired points, and covering corresponding intervals in $\sla(\slz)$ and
$\sla(-\slz)$.

Next, we consider the invariant
$\widehatit{CFAA}(\iz)_{\sla(-\slz),\sla(\slz)}$.  A formula for it is given in
\cite[Proposition 9.2]{LOT10a} as follows (in this equation we simplify
$\sla(\slz)$ to $\sla$ and $\sla(-\slz)$ to $\sla'$).
\begin{eqnarray}\label{eq:typeaaid}
  \widehatit{CFAA}(\iz)_{\sla',\sla}
  &=& \mathrm{Mor}^{\sla}(\tensor[_{\sla'}]{{\sla'}}{_{\sla'}} \boxtimes_{\sla'}
  \tensor[^{\sla,\sla'}]{\widehatit{CFDD}(\iz)}{}, \tensor[^\sla]{\mathbb{I}}{_\sla}). \nonumber \\
  &=& \left(\overline{\widehatit{CFDD}(\iz)}^{\sla',\sla} \boxtimes_{\sla'}
    \tensor[_{\sla'}]{\overline{\sla'}}{_{\sla'}}\right) \boxtimes_{\sla}
  \tensor[_\sla]{\sla}{_\sla}.
\end{eqnarray}

This bimodule has a large number of generators, making it difficult to use for
the computations needed in this paper. The main result of \cite{BZ1} is to
describe a bimodule $\hataa(\iz)$ homotopy equivalent to this (and hence is also
a combinatorial model of $\widehatit{CFAA}(\iz)$), but with a minimal number of
generators. The bimodule $\hataa(\iz)$ is generated over $\mathbb{F}_2$ by the
set of pairs of complementary idempotents, but with much more complex
$A_\infty$-bimodule actions. We will briefly review this construction in Section
\ref{sec:cfaaconstruction}.

One of the pairing theorems imply the following relation among the combinatorial
models for $\iz$:
\begin{equation} \label{eq:ddtensoraa}
  \tensor[^{\sla(\slz)}]{\mathbb{I}}{_{\sla(\slz)}} \simeq
  \hataa(\iz)_{\sla(-\slz),\sla(\slz)} \boxtimes_{\sla(-\slz)}
  \tensor[^{\sla(\slz),\sla(-\slz)}]{\hatdd(\iz)}{}.
\end{equation}

The two sides are not equal but only homotopy equivalent. This homotopy
equivalence is proved combinatorially as Corollary \ref{cor:ddtimesaais1} in
Section \ref{sec:daoperations}.

\subsection{Invariants of arcslides}\label{sec:invarcslidesintro}

The strongly-based mapping class groupoid is generated by a particularly simple
class of morphisms called \emph{arcslides}. We will now review their definitions
and the invariants associated to them. The relations among arcslides will be
described at the beginning of Section \ref{sec:computationda}.

Given a pointed matched circle $\slz_1$, and two matched pairs of points
$B=(b_1,b_2)$ and $C=(c_1,c_2)$ in $\mathbf{a}\subset Z_1$, such that $b_1$ and
$c_1$ are adjacent in $\mathbf{a}$, an arcslide of $b_1$ over $c_1$ moves $b_1$
to be adjacent to $c_2$, on the side opposite to its original position with
respect to $c_1$. This results in a new pointed matched circle $\slz_2$. Such a
move corresponds to a certain diffeomorphism $F^\circ(\slz_1)\to
F^\circ(\slz_2)$, which we will also call an arcslide. See Figure
\ref{fig:exarcslides} for two examples of arcslides. The first example is an
\emph{overslide} meaning $b_1$ is outside the interval $[c_1,c_2]$. The second
example is an \emph{underslide} meaning $b_1$ is inside that interval.

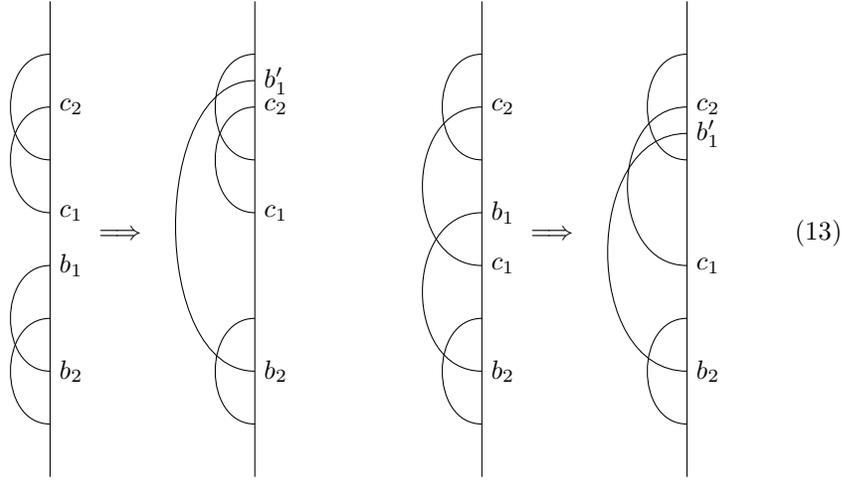
\begin{figure}[h!tb]
  \centering
  \begin{equation}
    \begin{tikzpicture} [x=20,y=20,baseline=(current bounding box.center)]
      \draw (0,0) to (0,9);
      \draw (0,1) .. controls (-1,1) and (-1,3) .. (0,3);
      \draw (0,2) .. controls (-1,2) and (-1,4) .. (0,4);
      \draw (0,5) .. controls (-1,5) and (-1,7) .. (0,7);
      \draw (0,6) .. controls (-1,6) and (-1,8) .. (0,8);
      \draw (0,2) node[right] {$b_2$};
      \draw (0,4) node[right] {$b_1$};
      \draw (0,5) node[right] {$c_1$};
      \draw (0,7) node[right] {$c_2$};
    \end{tikzpicture}
    \Longrightarrow
    \begin{tikzpicture} [x=20,y=20,baseline=(current bounding box.center)]
      \draw (0,0) to (0,9);
      \draw (0,1) .. controls (-1,1) and (-1,3) .. (0,3);
      \draw (0,2) .. controls (-2,2) and (-2,7.5) .. (0,7.5);
      \draw (0,5) .. controls (-1,5) and (-1,7) .. (0,7);
      \draw (0,6) .. controls (-1,6) and (-1,8) .. (0,8);
      \draw (0,2) node[right] {$b_2$};
      \draw (0,7.5) node[right] {$b_1'$};
      \draw (0,5) node[right] {$c_1$};
      \draw (0,7) node[right] {$c_2$};
    \end{tikzpicture}
    \quad\quad\quad\quad
    \begin{tikzpicture} [x=20,y=20,baseline=(current bounding box.center)]
      \draw (0,0) to (0,9);
      \draw (0,1) .. controls (-1,1) and (-1,3) .. (0,3);
      \draw (0,2) .. controls (-1.5,2) and (-1.5,5) .. (0,5);
      \draw (0,4) .. controls (-1.5,4) and (-1.5,7) .. (0,7);
      \draw (0,6) .. controls (-1,6) and (-1,8) .. (0,8);
      \draw (0,2) node[right] {$b_2$};
      \draw (0,4) node[right] {$c_1$};
      \draw (0,5) node[right] {$b_1$};
      \draw (0,7) node[right] {$c_2$};
    \end{tikzpicture}
    \Longrightarrow
    \begin{tikzpicture} [x=20,y=20,baseline=(current bounding box.center)]
      \draw (0,0) to (0,9);
      \draw (0,1) .. controls (-1,1) and (-1,3) .. (0,3);
      \draw (0,2) .. controls (-2,2) and (-2,6.5) .. (0,6.5);
      \draw (0,4) .. controls (-1.5,4) and (-1.5,7) .. (0,7);
      \draw (0,6) .. controls (-1,6) and (-1,8) .. (0,8);
      \draw (0,2) node[right] {$b_2$};
      \draw (0,4) node[right] {$c_1$};
      \draw (0,6.5) node[right] {$b_1'$};
      \draw (0,7) node[right] {$c_2$};
    \end{tikzpicture}
  \end{equation}
  \caption{Two examples of arcslides.}
  \label{fig:exarcslides}
\end{figure}

Given an arcslide $\tau:F^\circ(\slz_1)\to F^\circ(\slz_2)$, the invariant
$\widehatit{CFDD}(\tau)$ is a left type $\DD$ bimodule over $A(\slz_1)$ and
$A(-\slz_2)$. Constructing a combinatorial model of this bimodule, denoted
$\hatdd(\tau)$, is the main subject of \cite{LOT10c}. This model is computed
from a standard Heegaard diagram for the mapping cylinder $Y(\tau)$. For the two
arcslides in Figure \ref{fig:exarcslides}, these standard Heegaard diagrams are
shown in Figure \ref{fig:exhdiagram}. The tiny circles in the diagrams are
1-handle attachment points, paired according to their vertical positions. The
larger circles are $\beta$ circles, and all other arcs inside the boundary are
$\alpha$-arcs. Later on, we will draw more schematic versions of these diagrams,
omitting some of the $\beta$ circles and attaching points of 1-handles.

\begin{figure}[h!tb]
  \begin{tikzpicture} [x=18,y=18,baseline=(current bounding box.center)]
    \draw (0,0) to (0,10);
    \draw (6,0) to (6,10);
    \draw (0,1) to (6,1);\draw (0,2) to (6,2);\draw (0,3) to (6,3);
    \draw (0,5) to (6,5);\draw (0,6) to (6,6);\draw (0,7) to (6,7);\draw (0,9) to (6,9);
    \draw (0,1) .. controls (-1,1) and (-1,3) .. (0,3);
    \draw (0,2) .. controls (-1,2) and (-1,4) .. (0,4);
    \draw (0,5) .. controls (-1,5) and (-1,7) .. (0,7);
    \draw (0,6) .. controls (-1,6) and (-1,9) .. (0,9);
    \draw (6,1) .. controls (7,1) and (7,3) .. (6,3);
    \draw (6,2) .. controls (8,2) and (8,8) .. (6,8);
    \draw (6,5) .. controls (7,5) and (7,7) .. (6,7);
    \draw (6,6) .. controls (7,6) and (7,9) .. (6,9);
    \draw (0,4) .. controls (2,4) .. (2,5);
    \draw (2,7) .. controls (2,8) .. (6,8);
    \filldraw[fill=white] (1,1) circle (0.1);
    \filldraw[fill=white] (1,3) circle (0.1);
    \draw (1,1) circle (0.5);
    \filldraw[fill=white] (2,5) circle (0.1);
    \filldraw[fill=white] (2,7) circle (0.1);
    \draw (2,5) circle (0.5);
    \filldraw (1.97,4.5) circle (0.1);
    \draw (1.97,4.5) node[below right] {$y$};
    \filldraw[fill=white] (4,2) circle (0.1);
    \filldraw[fill=white] (4,8) circle (0.1);
    \draw (4,2) circle (0.5);
    \filldraw[fill=white] (5,6) circle (0.1);
    \filldraw[fill=white] (5,9) circle (0.1);
    \draw (5,6) circle (0.5);
  \end{tikzpicture}
  \quad\quad
  \begin{tikzpicture} [x=18,y=18,baseline=(current bounding box.center)]
    \draw (0,0) to (0,10);
    \draw (6,0) to (6,10);
    \draw (0,1) to (6,1);\draw (0,2) to (6,2);\draw (0,3) to (6,3);\draw (0,4) to (6,4);
    \draw (0,6) to (6,6);\draw (0,8) to (6,8);\draw (0,9) to (6,9);
    \draw (0,1) .. controls (-1,1) and (-1,3) .. (0,3);
    \draw (0,2) .. controls (-1,2) and (-1,5) .. (0,5);
    \draw (0,4) .. controls (-1,4) and (-1,7) .. (0,7);
    \draw (0,6) .. controls (-1,6) and (-1,9) .. (0,9);
    \draw (6,1) .. controls (7,1) and (7,3) .. (6,3);
    \draw (6,2) .. controls (8,2) and (8,7) .. (6,7);
    \draw (6,4) .. controls (7.5,4) and (7.5,8) .. (6,8);
    \draw (6,6) .. controls (7,6) and (7,9) .. (6,9);
    \draw (0,5) .. controls (2,5) .. (2,4);
    \draw (2,8) .. controls (2,7) .. (6,7);
    \filldraw[fill=white] (1,1) circle (0.1);
    \filldraw[fill=white] (1,3) circle (0.1);
    \draw (1,1) circle (0.5);
    \filldraw[fill=white] (2,4) circle (0.1);
    \filldraw[fill=white] (2,8) circle (0.1);
    \draw (2,4) circle (0.5);
    \filldraw (1.97,4.5) circle (0.1);
    \draw (1.97,4.5) node[above right] {$y$};
    \filldraw[fill=white] (4,2) circle (0.1);
    \filldraw[fill=white] (4,7) circle (0.1);
    \draw (4,2) circle (0.5);
    \filldraw[fill=white] (5,6) circle (0.1);
    \filldraw[fill=white] (5,9) circle (0.1);
    \draw (5,6) circle (0.5);
  \end{tikzpicture}
  \caption{Examples of Heegaard diagrams of arcslides.}
  \label{fig:exhdiagram}
\end{figure}
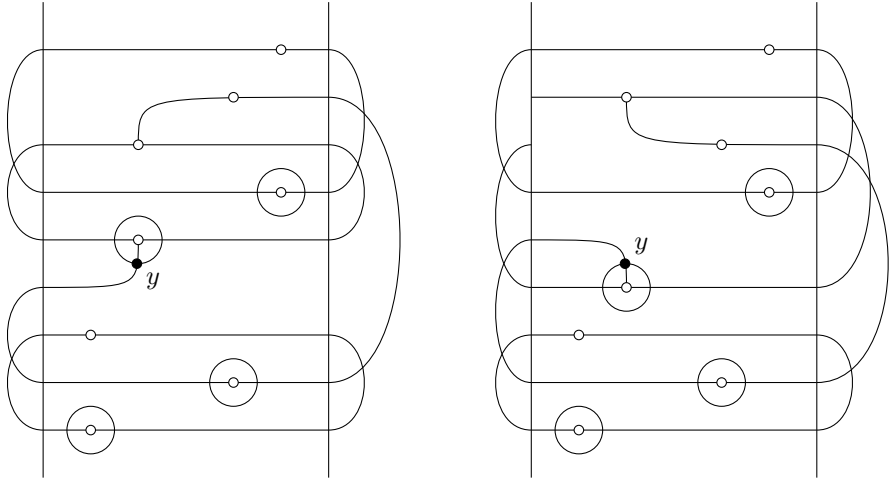

Each generator of $\hatdd(\tau)$ corresponds to a $g$-tuple of intersection
points between $\alpha$ and $\beta$ curves, where each $\beta$ circle contains
exactly one point, and each $\alpha$-arc contains at most one point. The (type
$D$) idempotent of the generator specifies which pairs of $\alpha$-arcs are
\emph{not} occupied by the generator. For the standard Heegaard diagram of
arcslides, a generator is uniquely specified by its idempotents on the two
sides.

There is an obvious identification between pairs of points on the two sides,
using which we can talk about two idempotents on different sides being
complementary, etc. There are two types of generators in $\hatdd(\tau)$. A
generator of type $X$ has complementary idempotents, and a generator of type $Y$
has idempotents that are complementary except for both containing the $C$ pair
and neither containing the $B$ pair. The type $X$ generators are those that do
not occupy the intersection point $y$ in Figure \ref{fig:exhdiagram}, while the
type $Y$ generators do.

The type $\DD$ action on the bimodule can be described as follows. Given a
pointed matched circle $\slz$, let $C(\slz)$ denote the collection of sets of
chords in $\slz$. For some $\xi\in C(\slz)$, let $a(\xi)\in \sla(\slz)$ denote
the sum of all generators of $\sla(\slz)$ produced by adding horizontal strands
to $\xi$ (this definition extends the case where $\xi$ is a chord). For any
arcslide $\tau:F^\circ(\slz_1)\to F^\circ(\slz_2)$, there is a collection of
pairs $C_{\tau}\subseteq \{(\xi_i,\eta_i)\,|\,\xi_i\in C(\slz_1), \eta_i\in
C(\slz_2)\}$ such that
\begin{equation}
  \delta^1(i\otimes i') = \sum_{\substack{
      (\xi_k,\eta_k)\in C_{\tau}\\
      ia(\xi_k)=a(\xi_k)j\\
      i'\overline{a(\eta_k)}=\overline{a(\eta_k)}j'\\
      j\otimes j' \mathrm{is~a~generator}}}
  (a(\xi_k)\otimes\overline{a(\eta_k)}) \otimes (j\otimes j'),
\end{equation}
where generators are represented by their type $D$ idempotents.

Intuitively, there is a term in the type $\DD$ action whenever the idempotent
agrees, and the moving strands part of the algebra coefficients match one of the
fixed patterns. Depending on whether the arcslide is an underslide or an
overslide, there are six or eight types of elements in $C_{\tau}$. See Figure 21
and 28 in \cite{LOT10c} for diagrams of these patterns. Note that not all pairs
in Figure 28 are actually in $C_{\tau}$ -- there is an additional choice
involved. In the following computations we will only use some of the simpler
pairs, involving algebra elements that have small total length. In particular we
will not need to consider any pair where a choice is necessary.

The following properties of $\hatdd(\tau)$ can be directly verified for the
above description:

\begin{itemize}
\item (Relation with Heegaard diagram) Every arrow in the type $\DD$ action
  comes from a domain in the Heegaard diagram. For a Heegaard diagram
  $\mathcal{H}$, denote $\bm{\alpha}$ and $\bm{\beta}$ to be the union of
  $\alpha$ and $\beta$ curves, respectively. A \emph{domain} in $\mathcal{H}$ is
  a non-negative integral linear combination of connected components of
  $\mathcal{H}\setminus\{\bm{\alpha},\bm{\beta}\}$.  Each arrow $\mathbf{x}\to
  a_1\otimes a_2\otimes\mathbf{y}$ in the type $\DD$ action comes from a domain
  $B$, such that $a_1$ and $a_2$ have multiplicities equal to the multiplicities
  of $B$ at the corresponding boundaries. Moreover, let $\partial^\alpha B$ be
  the part of the boundary of $B$ on the $\alpha$ curves, and let
  $\partial(\partial^\alpha B)$ be the part of the boundary of $\partial^\alpha
  B$ in the interior of the diagram, as a signed sum of intersection points,
  then $\partial(\partial^\alpha B)=\mathbf{x}-\mathbf{y}$. Intuitively, the
  $\alpha$-boundaries of $B$ start at points of $\mathbf{x}$ and end at points
  of $\mathbf{y}$, and vice versa for the $\beta$-boundary.

\item (Grading) There is a refined grading on the generators of $\hatdd(\tau)$
  to a particular grading set $S_\tau$, which has left-right actions by
  $G(\slz_1)$ and $G(\slz_2)$. Both actions are free and transitive, which means
  $S_\tau$ induces a group isomorphism $G(\slz_1)\to G(\slz_2)$. This group
  isomorphism is an invariant of $\tau$, up to composing by inner automorphisms
  of $G(\slz_1)$ and $G(\slz_2)$. In other words, $\tau$ induces an element in
  the set of outer isomorphisms $\mathrm{Out}(G(\slz_1),G(\slz_2))$. In fact,
  this outer isomorphism corresponds to the actions of $\tau$ on the homology of
  the surface (see \cite[Section 6.2]{LOT10c} for details).

\item (Stabilization) Given arcslide $\tau:F^\circ(\slz_1)\to F^\circ(\slz_2)$,
  let $\mathring{\slz_1}=\slz_1\#\slz^1$ and $\mathring{\slz_2}=\slz_2\#\slz^1$,
  where $\slz^1$ is the genus 1 pointed matched circle, and $\cdot~\#~\cdot$
  denotes connect sum on pointed matched circles. Let
  $\mathring{\tau}:F(\mathring{\slz_1})\to F(\mathring{\slz_2})$ be the arcslide
  acting as identity on the new handle, and as $\tau$ elsewhere. This is called
  the \emph{stabilization} of $\tau$. Then $\hatdd(\tau)$ and
  $\hatdd(\mathring{\tau})$ are related as follows: fix any idempotent $i_o$ on
  $\slz^1$ (occupying one of the two possible pairs), then there is an injection
  from generators of $\hatdd(\tau)$ into generators of
  $\hatdd(\mathring{\tau})$, sending $i\otimes i'$ in $\hatdd(\tau)$ to
  $(i\#i_o)\otimes (i'\#\overline{o(i_o)})$ in $\hatdd(\mathring{\tau})$. For
  any generator $\mathbf{x}$ in $\hatdd(\tau)$, let $\mathring{\mathbf{x}}$ be
  the corresponding generator in $\hatdd(\mathring{\tau})$. Then for any two
  generators $\mathbf{x}$, $\mathbf{y}$ in $\hatdd(\tau)$, there is a one-to-one
  correspondence between arrows from $\mathbf{x}$ to $\mathbf{y}$ in the $\DD$
  action of $\hatdd(\tau)$ and arrows from $\mathbf{\mathring{x}}$ to
  $\mathbf{\mathring{y}}$ in the $\DD$ action of $\hatdd(\mathring{\tau})$ that
  do not cover any region around the adjoined $\slz^1$, with $\mathbf{x}\to
  a\otimes b\otimes\mathbf{y}$ corresponding to $\mathbf{\mathring{x}}\to
  \mathring{a}\otimes\mathring{b}\otimes\mathbf{\mathring{y}}$, where
  $\mathring{a}$ and $\mathring{b}$ are obtained from $a$ and $b$ by adjoining
  the appropriate idempotents.

\item (Duality) For any arcslide $\tau:F^\circ(\slz_1)\to F^\circ(\slz_2)$, let
  $\overline{\tau}:F^\circ(-\slz_1)\to F^\circ(-\slz_2)$ be the arcslide with
  reversed orientation. Then $^{\sla(\slz_1),\sla(-\slz_2)}\hatdd(\tau)$ and
  $^{\sla(-\slz_1),\sla(\slz_2)}\hatdd(\overline{\tau})$ are dual to each other
  (using definition of dual at the end of Section \ref{sec:intropmc}). That is,
  \begin{equation} \label{eq:cfddarcslidedual}
    \overline{^{\sla(\slz_1),\sla(-\slz_2)}\hatdd(\tau)} \simeq
    \tensor[^{\sla(-\slz_1),\sla(\slz_2)}]{\hatdd(\overline{\tau})}{}.
  \end{equation}
  Furthermore, the invariant $^{\sla(\slz_2),\sla(-\slz_1)}\hatdd(\tau^{-1})$ is
  homotopy equivalent to the right side of Equation (\ref{eq:cfddarcslidedual}),
  after switching the two algebra actions (this comes from the fact that the
  mapping cylinder of $\tau^{-1}$ is the mirror image of the mapping cylinder of
  $\tau$).
\end{itemize}

\subsection{Main constructions}

We now summarize the combinatorial constructions that will be studied in this
paper. From here on, we will no longer use the analytical definitions of
invariants, but define everything combinatorially from scratch. We will use
notations such as $\widehatit{CFAA}$ to denote (combinatorially defined)
homotopy equivalence classes of bimodules, and notations such as $\hatdd$,
$\hatda$ to denote particular combinatorial models in the equivalence classes of
bimodules. For modules with one algebra action, we will use $\widehatit{CFA}$
and $\widehatit{CFD}$ for both models and equivalence classes, as no confusion
will arise there. Since all combinatorial definitions below use either
constructions derived from the analytical definition, or the appropriate box
tensor product, it is clear that the entire construction agrees with the
analytical definitions.

First, we use models $\hatdd(\iz)$, $^{\sla(\slz)}\mathbb{I}_{\sla(\slz)}$, and
$\hataa(\iz)$ to define $\widehatit{CFDD}(\iz)$, $\widehatit{CFDA}(\iz)$, and
$\widehatit{CFAA}(\iz)$, respectively. Then Corollary \ref{cor:ddtimesaais1}
shows Equation (\ref{eq:ddtensoraa}) holds for our combinatorial
construction. This means box tensoring with $\widehat{CFDD}(\iz)$ and
$\widehat{CFAA}(\iz)$ are inverse operations on equivalence classes of
bimodules.

Next, we define $\hatda(\tau)$ as the box tensor product
\begin{equation} \label{eq:arcslidedadef}
  \tensor[^{\sla(\slz_1)}]{\hatda(\tau)}{_{\sla(\slz_2)}} =
  \hataa(\mathbb{I}_{\slz_2})_{\sla(-\slz_2),\sla(\slz_2)}
  \boxtimes_{\sla(-\slz_2)}
  \tensor[^{\sla(\slz_1),\sla(-\slz_2)}]{\hatdd(\tau)}{}
\end{equation}

Given this, we can define $\hatda(\phi)$ for an arbitrary element $\phi$ of the
strongly-based mapping class groupoid, by factoring $\phi$ into arcslides. The
precise statement is:

\begin{construction}\label{constr:invmappingclass}
  Given an element $\phi:F^\circ(\slz_1)\to F^\circ(\slz_{n+1})$ of the
  strongly-based mapping class groupoid, with factorization
  $\phi=\tau_n\circ\cdots\circ\tau_1$, where $\tau_i:F^\circ(\slz_i)\to
  F^\circ(\slz_{i+1})$. Write $\bm{\tau}$ for the sequence
  $\tau_1,\dots,\tau_n$. Define
  \[ ^{\sla(\slz_1)}\hatda(\phi, \bm{\tau})_{\sla(\slz_{n+1})} =
  \tensor[^{\sla(\slz_1)}]{\hatda(\tau_1)}{_{\sla(\slz_2)}}
  \boxtimes\cdots\boxtimes
  \tensor[^{\sla(\slz_n)}]{\hatda(\tau_n)}{_{\sla(\slz_{n+1})}}. \]
\end{construction}

\begin{theorem}\label{thm:invmappingclass}
  The homotopy type of $\hatda(\phi,\bm{\tau})$ does not depend on the choice of
  factorization $\bm{\tau}$. Hence, $\hatda(\phi,\bm{\tau})$ is an invariant of
  $\phi$ up to homotopy equivalence.
\end{theorem}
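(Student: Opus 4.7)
The plan is to exploit the fact that the strongly-based mapping class groupoid admits a finite presentation in which the generators are arcslides and the relations are fixed finite equalities of arcslide compositions. Such a presentation is the subject of the beginning of Section~\ref{sec:repmcg}. Once a presentation is in hand, any two factorizations $\vec{\tau_i}$ and $\vec{\tau_i'}$ of $\phi$ differ by a finite sequence of applications of the defining relations; since $\boxtimes$ is associative up to homotopy equivalence, invariance of $\widehat{CFDA}(\phi,\vec{\tau_i})$ reduces to the following: for each relation
\[
\tau_1 \circ \cdots \circ \tau_k = \sigma_1 \circ \cdots \circ \sigma_l
\]
in the presentation, show that the tensor products $\widehat{CFDA}(\tau_1) \boxtimes \cdots \boxtimes \widehat{CFDA}(\tau_k)$ and $\widehat{CFDA}(\sigma_1) \boxtimes \cdots \boxtimes \widehat{CFDA}(\sigma_l)$ are homotopy equivalent.

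I would verify the relations one at a time, using (\ref{eq:arcslidedadef}) together with the explicit descriptions of $\widehat{CFDD}(\tau)$ from the chord-pair data $C_\tau$ and the minimal model $\mathcal{M}$ for $\widehat{CFAA}(\iz)$ from \cite{BZ1}. The simplest case is the involution relation $\tau \circ \bar{\tau} = \iz$, where $\bar{\tau}$ is the reverse arcslide; here the desired statement reduces to $\widehat{CFDA}(\iz) \simeq \mathbb{I}$, which is Corollary~\ref{cor:ddtimesdais1}. For commutation relations between arcslides supported on disjoint portions of the pointed matched circle, the two sides are essentially tautologically isomorphic at the level of generators. The substantive work lies in the longer (triangle, pentagon, and similar) relations, where I would compute both sides combinatorially and reduce each to a common minimal model by successive cancellation of contractible summands, producing the required homotopy equivalence.

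A key auxiliary tool is the grading: by the last bullet of Section~\ref{sec:invarcslidesintro}, the grading set on $\widehat{CFDD}(\tau)$ is free and transitive under $G(\slz_1)$ and $G(\slz_2)$, and these gradings descend onto each tensor product. The resulting grading constraints sharply restrict which algebra elements can appear in any differential or $A_\infty$ operation, and in practice they pin down most structure maps up to homotopy. This is what ensures that the cancellation process described above yields a canonical answer independent of the many choices made along the way.

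The main obstacle is the combinatorial complexity of the longer relations. A priori, tensor products of several $\widehat{CFDA}(\tau_i)$ can have many generators and involve high-arity $A_\infty$ operations inherited from $\widehat{CFAA}(\iz)$. Two features make the task tractable. First, by using the minimal model $\mathcal{M}$ of \cite{BZ1} in place of the mapping-cone formula (\ref{eq:typeaaid}), the number of generators introduced at each tensoring step stays small. Second, the grading constraints above eliminate most candidate terms. With these in hand, each relation becomes a finite combinatorial verification, carried out by exhibiting an explicit chain of cancellations and identifications between the two sides.
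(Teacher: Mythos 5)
Your overall skeleton (reduce to the finite list of arcslide relations, use the minimal model of $\widehat{CFAA}(\iz)$ from \cite{BZ1}, and exploit gradings plus cancellation) is the same as the paper's, but two of your reductions have genuine gaps. First, the involution relation does not ``reduce to $\widehat{CFDA}(\iz)\simeq\mathbb{I}$, which is Corollary \ref{cor:ddtimesdais1}.'' That corollary only identifies the combinatorially defined $\widehat{CFDA}(\iz)=\widehat{CFDD}(\iz)\boxtimes\widehat{CFAA}(\iz)$ with the identity bimodule; the involution relation requires showing $\widehat{CFDA}(\tau)\boxtimes\widehat{CFDA}(\bar{\tau})\simeq\mathbb{I}$ for an arcslide $\tau$ and its inverse, which is a substantive computation with the arcslide bimodules themselves (in the paper: enumerating the $XY$/$YX$ generators, cancelling them via domain $C$, and verifying conditions (ID-1)--(ID-4) of Lemma \ref{lem:idrecognize1}, with (ID-3) checked by classifying all short-coefficient arrows, including those created by zig-zags in the cancellation). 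This case is not dispensable: it is exactly what gives quasi-invertibility of arcslide bimodules (Corollary \ref{cor:arcslideinvertible}), which the paper then uses, via Lemma \ref{lem:idrecognize2}, to make the remaining relations tractable. Relatedly, the commutativity relation is not ``essentially tautological'': the intermediate pointed matched circles on the two sides differ, and in the paper's identity-factorization form it still requires cancelling generators containing $Y$'s.

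Second, and more fundamentally, your plan for the longer relations --- ``compute both sides and reduce each to a common minimal model by successive cancellation'' --- is missing the ingredient that makes such a comparison conclusive. After cancellation you obtain a type $DA$ bimodule whose generators match those of $\mathbb{I}$, but two bimodules with the same generators and gradings can still differ in their higher $A_\infty$/type $D$ operations; gradings alone do not ``pin down most structure maps up to homotopy.'' The paper closes this gap with the rigidity lemmas (Lemmas \ref{lem:idrecognize1} and \ref{lem:idrecognize2}), which rest on the uniqueness theorem for $\widehat{CFDD}(\iz)$ (Theorem \ref{thm:ddrecognize}) and on special features of $\sla(\slz)$: identity-type generators, the right grading set with trivial induced outer automorphism, stability, and either the length-one chord actions (ID-3) or quasi-invertibility (ID-3') force homotopy equivalence with $\mathbb{I}$. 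Without an analogue of these recognition results (or some other mechanism for controlling all higher operations), your cancellation scheme does not yield a proof; with them, the work for each relation reduces, as in the paper, to checking (ID-1) by an explicit choice of cancellable arrows, plus the grading and stability checks.
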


This theorem is proved in Section \ref{sec:relmcg}. Given this, we can define
$\widehatit{CFDA}(\phi)$ to be the equivalence class of
$\hatda(\phi,\bm{\tau})$, for any choice of factorization $\bm{\tau}$.

The other bimodule invariants $\widehatit{CFDD}(\phi)$,
$\widehatit{CFAA}(\phi)$, and $\widehatit{CFAD}(\phi)$ for a general morphism
$\phi:F^\circ(\slz_1)\to F^\circ(\slz_2)$ can be defined as:
\begin{eqnarray}
  ^{\sla(\slz_1),\sla(-\slz_2)}\widehatit{CFDD}(\phi) =
  \tensor[^{\sla(\slz_1)}]{\widehatit{CFDA}(\phi)}{_{\sla(\slz_2)}}
  \boxtimes_{\sla(\slz_2)}
  \tensor[^{\sla(\slz_2),\sla(-\slz_2)}]{\widehatit{CFDD}(\mathbb{I}_{\slz_2})}{},
  \nonumber \\
  \widehatit{CFAA}(\phi)_{\sla(-\slz_1),\sla(\slz_2)} =
  \widehatit{CFAA}(\mathbb{I}_{\slz_1})_{\sla(-\slz_1),\sla(\slz_1)}
  \boxtimes_{\sla(\slz_1)}
  \tensor[^{\sla(\slz_1)}]{\widehatit{CFDA}(\phi)}{_{\sla(\slz_2)}}, \nonumber \\
  ^{\sla(-\slz_2)}\widehatit{CFAD}(\phi)_{\sla(-\slz_1)} =
  \widehatit{CFAA}(\mathbb{I}_{\slz_1})_{\sla(-\slz_1),\sla(\slz_1)}
  \boxtimes_{\sla(\slz_1)}
  \tensor[^{\sla(\slz_1),\sla(-\slz_2)}]{\widehatit{CFDD}(\phi)}{}. \nonumber
\end{eqnarray}

Since $\hatdd(\iz)$ is the quasi-inverse of $\hataa(\iz)$ (that is, inverse up
to homotopy equivalence), we know $\widehatit{CFDD}(\tau)$ for an arcslide
$\tau$ can also be represented by $\hatdd(\tau)$. Also, expanding out the
definitions, we see $\widehatit{CFAD}(\phi)\simeq
\widehatit{CFAA}(\phi)\boxtimes\widehatit{CFDD}(\mathbb{I}_{\slz_2})$.

This concludes our construction of bimodule invariants (we will not need
bimodule invariants other than those for mapping classes of surface
diffeomorphisms). To construct invariants of closed 3-manifolds, we need one
more building block: $\widehatit{CFD}$ of the \emph{0-framed handlebody}
$\hg$. Here $\hg$ is the 3-manifold with one parametrized boundary given by the
Heegaard diagram in Figure \ref{fig:diaghg}.

\begin{figure} [h!tb]
  \centering
  \begin{tikzpicture} [x=18,y=18,baseline=(current bounding box.center)]
    \draw (-0.5,0) to (15,0);
    \draw (0,-0.1) node[below] {$z$};
    \foreach \x in {1,...,8} {
      \draw (\x, 0) to (\x, 1);
      \draw (\x, 0) node[below] {$\x$};
    }
    \foreach \x in {11,...,14} {
      \draw (\x, 0) to (\x, 1);
    }
    \draw (14, 0) node[below] {$4g$};
    \foreach \x in {2,4,6,8,12,14} {
      \filldraw[fill=white] (\x, 1) circle (0.1);
    }
    \foreach \x in {2,6,12} {
      \draw (\x, 1) circle (0.5);
      \draw (\x+1, 1) arc (0:180:1);
      \filldraw[fill=black] (\x, 0.5) circle (0.1);
    }
    \draw (9.5,1) node {$\cdots$};
    \draw (10.5,0) node[below] {$\cdots$};
  \end{tikzpicture}
  \caption{Heegaard diagram for the 0-framed handlebody. The numbers at bottom
    label points in $\slz^g$. Points in $-\slz^g$ are labeled in the reverse
    order.}
  \label{fig:diaghg}
\end{figure}

In this diagram, the small circles are 1-handle attachment points, paired
consecutively. The larger circles are $\beta$ circles, and all other arcs inside
the boundary are $\alpha$ arcs. From the way the $\alpha$ arcs meet the
boundary, we see that the boundary of $\hg$ is parametrized by the split pointed
matched circle of genus $g$, denoted as $\slz^g$. This is the pointed matched
circle with matching
\[ (1,3),(2,4),(5,7),(6,8),\dots,(4g-3,4g-1),(4g-2,4g). \] While it is true that
$-\slz^g=\slz^g$, we will usually distinguish them in order to emphasize
orientation changes.

The orientation reversal $-\hg$ is called the \emph{$\infty$-framed
  handlebody}. Its boundary is parametrized by $-\slz^g$. The Heegaard diagram
for $-\hg$ is the reflection of that for $\hg$.

The invariant $\widehatit{CFD}(\hg)$ has left type $D$ action by
$\sla(-\slz^g)$. It can be defined using the following model: there is a single
generator $\mathbf{x}$, corresponding to the set of intersection points
indicated in Figure \ref{fig:diaghg}. The idempotent of $\mathbf{x}$ contains
pairs $(2,4),(6,8),\dots$ in $-\slz^g$ (pairs corresponding to $\alpha$-arcs not
occupied by $\mathbf{x}$; note the labeling of points in $-\slz^g$ is
reversed). The type $D$ action is
\[ \delta^1(\mathbf{x}) = \sum_{\xi\in\mathcal{D}} a(\xi)\cdot\mathbf{x}, \]
where $\mathcal{D}$ is the set of chords $\{2\to 4, 6\to 8, \dots\}$. The
invariant $\widehatit{CFD}(-\hg)$ can be defined to be the dual of
$\widehatit{CFD}(\hg)$.

We now give the combinatorial construction of $\widehatit{HF}(Y)$ for a closed
3-manifold $Y$, following the spirit of the construction in \cite{LOT10c}.

\begin{construction} \label{constr:invclosed} Let $Y$ be a closed
  3-manifold. Choose a Heegaard splitting $Y_1\cup_u Y_2$ of $Y$, where
  $u:\partial Y_1\to -\partial Y_2$ is the gluing map. Fix circle and basepoint
  $(Z,z)$ on the gluing boundary $Y_1\cap Y_2$, and diffeomorphisms $f_1:\hg\to
  Y_1$ and $f_2:-\hg\to Y_2$, preserving $(Z,z)$, from the standard handlebodies
  to $Y_1$ and $Y_2$. Let $f_{1*}:F^\circ(\slz^g)\to\partial Y_1$ and
  $f_{2*}:F^\circ(-\slz^g)\to\partial Y_2$ be the restrictions of $f_1$ and
  $f_2$ to the boundary. Let $\psi=\overline{f_{2*}}^{-1}\circ u\circ f_{1*}$ be
  the induced gluing map. This is an element of the strongly-based mapping class
  group on $F^\circ(\slz^g)$. Define:
  \[ \widehatit{HF}(Y,Y_1,Y_2,u,f_1,f_2) =
  \left(\widehatit{CFAA}(\psi)_{\sla(-\slz^g),\sla(\slz^g)} \boxtimes
    \tensor[^{\sla(-\slz^g)}]{\widehatit{CFD}(\hg)}{} \right) \boxtimes
  \tensor[^{\sla(\slz^g)}]{\widehatit{CFD}(-\hg)}{}. \]
\end{construction}

\begin{theorem} \label{thm:invclosed} The homotopy type of
  $\widehatit{HF}(Y,Y_1,Y_2,u,f_1,f_2)$ does not depend on the choice of
  Heegaard splitting $Y=Y_1\cup_u Y_2$ or the parametrizations
  $f_1,f_2$. Therefore it gives an invariant of $Y$ up to homotopy equivalence.
\end{theorem}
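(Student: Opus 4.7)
The plan is to adapt the standard topological proof of invariance of $\widehat{HF}$ to the combinatorial setting developed here, using the Reidemeister--Singer theorem. Any two basepoint-preserving Heegaard splittings of a closed oriented 3-manifold $Y$, together with parametrizations $(f_1,f_2)$ of the two handlebodies, are related by a finite sequence of three kinds of moves: (i) changing the gluing map $\phi$ within its isotopy class, (ii) precomposing $f_1$ or $f_2$ with a self-diffeomorphism $\sigma$ of $\hg$ (or $-\hg$) fixing the boundary basepoint, and (iii) stabilization, which raises the genus by one and replaces $\slz^g$ with $\slz^{g+1}$. I would verify invariance of $\widehat{HF}(Y,Y_1,Y_2,f_1,f_2)$ under each of these three moves; the theorem then follows.

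Invariance under (i) is immediate: $\psi$ depends only on the isotopy classes of $\phi,f_1,f_2$, and by Theorem \ref{thm:invmappingclass} the bimodule $\widehat{CFAA}(\psi)$, and hence all of $\widehat{HF}$, is well defined on that isotopy class.

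For (ii), precomposing $f_1$ with $\sigma$ replaces $\psi$ by $\psi\circ\sigma_*$, where $\sigma_*$ is the restriction of $\sigma$ to $\partial\hg$. By the pairing formula (\ref{eq:da-mcg-pairing}), $\widehat{CFDA}(\psi\circ\sigma_*)$ factors as $\widehat{CFDA}(\sigma_*)\boxtimes\widehat{CFDA}(\psi)$. Substituting into the definition of $\widehat{HF}$ and regrouping via associativity of the box tensor product, the desired equivalence reduces to the combinatorial handlebody invariance
\[ \widehat{CFDA}(\sigma_*) \boxtimes \widehat{CFD}(\hg) \simeq \widehat{CFD}(\hg), \]
which is the combinatorial form of the statement that gluing the mapping cylinder of $\sigma_*$ to $\hg$ recovers $\hg$ (since $\sigma$ extends over $\hg$). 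The subgroup of the mapping class group of $\partial\hg$ consisting of classes extending over $\hg$ is generated by a finite explicit set (handle slides of the 1-handles of $\hg$), so by Theorem \ref{thm:invmappingclass} it suffices to verify this equivalence one generator at a time. Each such generator factors into arcslides, reducing the problem to direct combinatorial computations using the description of $\widehat{CFDD}(\tau)$ for arcslides in section \ref{sec:invarcslidesintro} and the definition (\ref{eq:arcslidedadef}) of $\widehat{CFDA}(\tau)$. A symmetric argument, using the identity $\overline{\widehat{CFD}(\hg)}\simeq\widehat{CFD}(-\hg)$, handles changes of $f_2$.

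For (iii), stabilization extends $\psi$ to a morphism $\psi'$ on $\slz^{g+1}$ that equals $\psi$ on the first $4g$ points and is the identity on the remaining four. Factoring the bimodules according to the block decomposition of $\slz^{g+1}$, the required equivalence between the stabilized and unstabilized $\widehat{HF}$ reduces to the genus-one computation
\[ \widehat{CFAA}(\mathbb{I}_{\slz^1}) \boxtimes \widehat{CFD}(\mathbf{H}^1) \boxtimes \overline{\widehat{CFD}(\mathbf{H}^1)} \simeq \mathbb{F}_2, \]
which is the combinatorial computation of $\widehat{HF}(S^3)$ from its standard genus-one Heegaard splitting, performed directly from the explicit generators and actions of the three bimodules involved. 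The main obstacle will be step (ii): the reduction to arcslide generators is structural, but each resulting homotopy equivalence of bimodules still requires concrete combinatorial manipulation, of the same flavor as, and reusing the technology of, the proof of Theorem \ref{thm:invmappingclass} in section \ref{sec:repmcg}.
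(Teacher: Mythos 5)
Your overall strategy coincides with the paper's: invariance under isotopy is Theorem \ref{thm:invmappingclass}, reparametrization of the handlebodies is Lemma \ref{lem:handlebodyinv}, and stabilization is Lemma \ref{lem:stabinv}. The gaps are in how you propose to prove the latter two. For stabilization, the claimed ``factoring the bimodules according to the block decomposition of $\slz^{g+1}$'' is not available: $\sla(\slz^g\#\slz^1)$ is not $\sla(\slz^g)\otimes\sla(\slz^1)$, since there are chords running from the old block into the adjoined block, and correspondingly $\widehat{CFAA}(\mathring{\psi})$ does not split as $\widehat{CFAA}(\psi)$ tensored with a genus-one piece. So the reduction to $\widehat{CFAA}(\mathbb{I}_{\slz^1})\boxtimes\widehat{CFD}(\mathbf{H}^1)\boxtimes\overline{\widehat{CFD}(\mathbf{H}^1)}\simeq\mathbb{F}_2$ is unjustified as stated. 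The paper's proof of Lemma \ref{lem:stabinv} works instead at the level of the glued Heegaard diagram: the adjoined strip of genus-one identity diagrams has its intersection points forced, and every region of the adjoined strip lies in the basepoint region, so no domain contributing a differential can enter it; hence the stabilized and unstabilized complexes are literally identical, with no genus-one model computation needed.

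For the reparametrization step, ``verify the equivalence one generator at a time'' hides two issues. First, a generating set of the handlebody subgroup depends on $g$, so as written you face an unbounded family of verifications, one for each genus; the paper avoids this by using Suzuki's generating set (Theorem \ref{thm:gensuzuki}), whose elements are supported in one or two handles, and then invoking the same adjoining-a-handle argument as in the stabilization proof to show that checking each generator in its minimal genus ($1$ or $2$) suffices --- reducing everything to a finite (computer-assisted) computation. Second, the relevant group is the strongly-based group $\mathrm{MCG}_0^{\beta}(\slz^g)$, a central $\mathbb{Z}$-extension of $\mathrm{MCG}^{\beta}(F_g)$ by the boundary Dehn twist; a generating set must account for this extra element (the paper writes it as $\rho^g$ and re-expresses $\rho$ in terms of local generators), which your ``handle slides of the 1-handles'' gloss does not cover. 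Your treatment of the $f_2$ side by symmetry is in the right spirit, but note the paper makes this precise through the duality statements $\overline{\widehat{CFDD}(\phi)}\simeq\widehat{CFDD}(\bar{\phi})$ and Lemma \ref{lem:cfaaselfdual}, so that the $\alpha$-handlebody case is genuinely reduced to the $\beta$-handlebody computation rather than redone.
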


We will prove Theorem \ref{thm:invclosed} combinatorially in Section
\ref{sec:3manifold}. Given this theorem, we can write $\widehatit{HF}(Y)$ for
$\widehatit{HF}(Y,Y_1,Y_2,u,f_1,f_2)$, for some choice of Heegaard splitting and
parametrizations. From the construction, it is clear that this is equivalent to
the definition of $\widehatit{HF}(Y)$ using holomorphic curves.

%%% Local Variables:
%%% mode: latex
%%% TeX-master: "dacalc"
%%% End:

\section{Computations on $\DA$ invariants}
\label{sec:computationda}

In this section, we prove some preliminary results on type $\DA$ bimodules, and
perform some computations on the type $DA$ invariants of arcslides, in
preparation for the proof of Theorem \ref{thm:invmappingclass} in Section \ref
{sec:relmcg}.

First, we give an outline for the proof of Theorem \ref{thm:invmappingclass}. We
want to show that the combinatorial construction of $\hatda(\phi,\bm{\tau})$
does not depend on the choice of factorization of $\phi$ into arcslides
$\bm{\tau}$. For this purpose, it is necessary to understand relations among
arcslides. This is studied in detail in \cite{Bene} and \cite{ABP}. The notions
of pointed matched circles and arcslides correspond to linear chord diagrams and
chord slides in these papers. We now give a summary of the results.

Locally, an arcslide can be viewed as one end of the $B$ pair sliding along the
$C$ pair:
\begin{equation}
  \begin{threebase}
    \draw (0,-1.6) to [out=90,in=-45] (-1.05,-0.05);
    \draw (-0.95,0.05) to [out=-45,in=-135] (0.95,0.05);
  \end{threebase}
  \Longrightarrow
  \begin{threebase}
    \draw (0,-1.6) to [out=90,in=-135] (1.05,-0.05);
    \draw (-0.95,0.05) to [out=-45,in=-135] (0.95,0.05);
  \end{threebase} \nonumber
\end{equation}
In this diagram, the three short segments denote portions of the straight line
in the pointed matched circle. The upper, stationary arc denotes the $C$ pair;
and the lower, moving arc denotes the $B$ pair.

There are five types of relations on arcslides, and together they generate all
relations. The local diagrams for the five types of relations are as follows
(see \cite[Theorem 6.2, Figure 6.1]{Bene}):
\begin{itemize}
\item Triangle
  \begin{equation}
    \begin{threebase}
      \draw (-1.05,-0.05) to [out=-45,in=90] (-0.1,-1.6);
      \draw (1.05,-0.05) to [out=-135,in=90] (0.1,-1.6);
    \end{threebase}
    \Longrightarrow
    \begin{threebase}
      \draw (-1.05,-0.05) to [out=-45,in=90] (-0.1,-1.6);
      \draw (-0.95,0.05) to [out=-45,in=-135] (0.95,0.05);
    \end{threebase}
    \Longrightarrow
    \begin{threebase}
      \draw (-0.95,0.05) to [out=-45,in=-135] (0.95,0.05);
      \draw (1.05,-0.05) to [out=-135,in=90] (0.1,-1.6);
    \end{threebase}
    \Longrightarrow
    \begin{threebase}
      \draw (-1.05,-0.05) to [out=-45,in=90] (-0.1,-1.6);
      \draw (1.05,-0.05) to [out=-135,in=90] (0.1,-1.6);
    \end{threebase} \nonumber
  \end{equation}
\item Involution
  \begin{equation}
    \begin{threebase}
      \draw (-0.95,0.05) to [out=-45,in=-135] (0.95,0.05);
      \draw (1.05,-0.05) to [out=-135,in=90] (0.1,-1.6);
    \end{threebase}
    \Longrightarrow
    \begin{threebase}
      \draw (-1.05,-0.05) to [out=-45,in=90] (-0.1,-1.6);
      \draw (-0.95,0.05) to [out=-45,in=-135] (0.95,0.05);
    \end{threebase}
    \Longrightarrow
    \begin{threebase}
      \draw (-0.95,0.05) to [out=-45,in=-135] (0.95,0.05);
      \draw (1.05,-0.05) to [out=-135,in=90] (0.1,-1.6);
    \end{threebase} \nonumber
  \end{equation}
\item Commutativity
  \begin{eqnarray}&&
    \begin{threebase}
      \draw (-1.05,-0.05) to [out=-45,in=90] (-0.1,-1.6);
      \draw (1.05,-0.05) to [out=-135,in=90] (0.1,-1.6);
    \end{threebase}
    \begin{threebase}
      \draw (-1.05,-0.05) to [out=-45,in=90] (-0.1,-1.6);
      \draw (1.05,-0.05) to [out=-135,in=90] (0.1,-1.6);
    \end{threebase}
    \Rightarrow
    \begin{threebase}
      \draw (-0.95,0.05) to [out=-45,in=-135] (0.95,0.05);
      \draw (1.05,-0.05) to [out=-135,in=90] (0.1,-1.6);
    \end{threebase}
    \begin{threebase}
      \draw (-1.05,-0.05) to [out=-45,in=90] (-0.1,-1.6);
      \draw (1.05,-0.05) to [out=-135,in=90] (0.1,-1.6);
    \end{threebase}
    \Rightarrow
    \begin{threebase}
      \draw (-0.95,0.05) to [out=-45,in=-135] (0.95,0.05);
      \draw (1.05,-0.05) to [out=-135,in=90] (0.1,-1.6);
    \end{threebase}
    \begin{threebase}
      \draw (-0.95,0.05) to [out=-45,in=-135] (0.95,0.05);
      \draw (1.05,-0.05) to [out=-135,in=90] (0.1,-1.6);
    \end{threebase} \nonumber \\
    &\Rightarrow&
    \begin{threebase}
      \draw (-1.05,-0.05) to [out=-45,in=90] (-0.1,-1.6);
      \draw (1.05,-0.05) to [out=-135,in=90] (0.1,-1.6);
    \end{threebase}
    \begin{threebase}
      \draw (-0.95,0.05) to [out=-45,in=-135] (0.95,0.05);
      \draw (1.05,-0.05) to [out=-135,in=90] (0.1,-1.6);
    \end{threebase}
    \Rightarrow
    \begin{threebase}
      \draw (-1.05,-0.05) to [out=-45,in=90] (-0.1,-1.6);
      \draw (1.05,-0.05) to [out=-135,in=90] (0.1,-1.6);
    \end{threebase}
    \begin{threebase}
      \draw (-1.05,-0.05) to [out=-45,in=90] (-0.1,-1.6);
      \draw (1.05,-0.05) to [out=-135,in=90] (0.1,-1.6);
    \end{threebase} \nonumber
  \end{eqnarray}
\item Left Pentagon
  \begin{equation}
    \begin{fourbase}
      \draw (0,1) to (0,-1);
      \draw (-1,0) to [out=0,in=90] (-0.15,-1);
      \draw (1,0) to [out=180,in=90] (0.15,-1);
    \end{fourbase}
    \Rightarrow
    \begin{fourbase}
      \draw (0,1) to (0,-1);
      \draw (0.15,1) to [out=270,in=180] (1,0);
      \draw (-1,0) to [out=0,in=90] (-0.15,-1);
    \end{fourbase}
    \Rightarrow
    \begin{fourbase}
      \draw (-0.1,1) to [out=270,in=0] (-1,0.1);
      \draw (-1,-0.1) to [out=0,in=90] (-0.1,-1);
      \draw (0.1,1) to [out=270,in=180] (1,0.1);
    \end{fourbase}
    \Rightarrow
    \begin{fourbase}
      \draw (0,1) to [out=270,in=0] (-1,0.15);
      \draw (-1,0) to (1,0);
      \draw (-1,-0.15) to [out=0,in=90] (0,-1);
    \end{fourbase}
    \Rightarrow
    \begin{fourbase}
      \draw (-0.1,1) to [out=270,in=0] (-1,0.1);
      \draw (-1,-0.1) to [out=0,in=90] (-0.1,-1);
      \draw (0.1,-1) to [out=90,in=180] (1,-0.1);
    \end{fourbase}
    \Rightarrow
    \begin{fourbase}
      \draw (0,1) to (0,-1);
      \draw (-1,0) to [out=0,in=90] (-0.15,-1);
      \draw (1,0) to [out=180,in=90] (0.15,-1);
    \end{fourbase} \nonumber
  \end{equation}
\item Right Pentagon
  \begin{equation}
    \begin{fourbase}
      \draw (0,1) to (0,-1);
      \draw (-1,0) to [out=0,in=90] (-0.15,-1);
      \draw (1,0) to [out=180,in=90] (0.15,-1);
    \end{fourbase}
    \Rightarrow
    \begin{fourbase}
      \draw (0,1) to (0,-1);
      \draw (0.15,-1) to [out=90,in=180] (1,0);
      \draw (-1,0) to [out=0,in=270] (-0.15,1);
    \end{fourbase}
    \Rightarrow
    \begin{fourbase}
      \draw (-0.1,1) to [out=270,in=0] (-1,0.1);
      \draw (1,-0.1) to [out=180,in=90] (0.1,-1);
      \draw (0.1,1) to [out=270,in=180] (1,0.1);
    \end{fourbase}
    \Rightarrow
    \begin{fourbase}
      \draw (0,1) to [out=270,in=180] (1,0.15);
      \draw (-1,0) to (1,0);
      \draw (1,-0.15) to [out=180,in=90] (0,-1);
    \end{fourbase}
    \Rightarrow
    \begin{fourbase}
      \draw (0.1,-1) to [out=90,in=180] (1,-0.1);
      \draw (1,0.1) to [out=180,in=270] (0.1,1);
      \draw (-0.1,-1) to [out=90,in=0] (-1,-0.1);
    \end{fourbase}
    \Rightarrow
    \begin{fourbase}
      \draw (0,1) to (0,-1);
      \draw (-1,0) to [out=0,in=90] (-0.15,-1);
      \draw (1,0) to [out=180,in=90] (0.15,-1);
    \end{fourbase} \nonumber
  \end{equation}
\end{itemize}

Each relation gives one way to factor the identity morphism $\iz$ starting and
ending at some pointed matched circle $\slz$. For proving Theorem
\ref{thm:invmappingclass}, it suffices to check that for each such factorization
\[ \iz = \tau_n\circ\cdots\circ\tau_1, \] the corresponding homotopy equivalence
\begin{equation}\label{eq:tocheck}
  ^{\sla(\slz)}\mathbb{I}_{\sla(\slz)} \simeq
  \hatda(\tau_1)\boxtimes\cdots\boxtimes\hatda(\tau_n)
\end{equation}
holds. Note that in general, the starting and ending pointed matched circles of
each $\tau_i$ may be different from $\slz$. This is the main reason why we need
to consider strongly-based mapping class groupoids, even if we are only
interested in statements about strongly-based mapping class groups.

The overall strategy for verifying Equation (\ref{eq:tocheck}) is as
follows. From the description of $\hatda(\tau_i)$, we can readily enumerate the
set of generators on the right side of the equation. There are, however, more
generators on the right side than on the left side. The \emph{cancellation
  lemma} for type $\DA$ bimodules describes conditions under which we can prove
that a bimodule is homotopy equivalent to one with two fewer generators. Using
it, we can remove generators from the right side in pairs, so that the set of
remaining generators matches that on the left side. The cancellation lemma is
stated and proved in Section \ref{sec:cancellationlemma}.

It turns out that a type $\DA$ bimodule with the same set of generators as
$^{\sla(\slz)}\mathbb{I}_{\sla(\slz)}$, and with a few more properties in common
with $^{\sla(\slz)}\mathbb{I}_{\sla(\slz)}$, must be homotopy equivalent to
$^{\sla(\slz)}\mathbb{I}_{\sla(\slz)}$. We prove two lemmas of this kind, which
we call \emph{rigidity lemmas}, in Section \ref{sec:idrecognize}. The first
lemma will be used to prove the involution relation, and the second lemma will
be used for all other relations. The idea here is that once the involution
relation is proved, we can show that $\hatda(\tau)$ is quasi-invertible for any
arcslide $\tau$, which implies that any box tensor product of such bimodules is
also quasi-invertible (recall that a type $\DA$ bimodule $^AM_B$ is
\emph{quasi-invertible} if there exists $^BN_A$ such that $^AM_B\boxtimes
\tensor[^B]{N}{_A}\simeq \tensor[^A]{\mathbb{I}}{_A}$). This means checking the
quasi-invertibility condition in the second lemma becomes trivial, and we can
avoid checking the more involved condition in the first lemma that it
replaces. We note here that the rigidity lemmas depend on specific properties of
$\sla(\slz)$, and is not applicable to dg-algebras in general.

To apply the cancellation lemma, and in the case of the involution relation, the
rigidity lemma, we need to compute certain arrows in the type $\DA$ action of the
bimodule on the right side. To prepare for this, we review the construction of
$\hataa(\iz)$ in Section \ref{sec:cfaaconstruction}, and compute in Section
\ref{sec:daoperations} some arrows in the type $\DA$ action of $\hatda(\tau)$ for
arcslides $\tau$ (the components in the tensor product).

\subsection{Cancellation lemmas}\label{sec:cancellationlemma}

In this section we state cancellation lemmas for type $D$ modules and type $\DA$
bimodules over dg-algebras. Both are generalizations of the cancellation lemma
in the case of chain complexes. These results are well-known (see, for example,
\cite[Section 2.6]{Levine10}).

Let $A$ be a dg-algebra over a ground ring $\mathbf{k}$, where $\mathbf{k}$ is a
direct sum of copies of $\mathbb{F}_2$ (in our application, $A=\sla(\slz)$ and
$\mathbf{k}$ is generated by the indecomposable idempotents). Let $M$ be a left
type $D$ module over $A$ with a fixed set of generators $\mathcal{G}$. We can
describe the action $\delta^1$ on $M$ in terms of coefficients as follows: for
any $x\in\mathcal{G}$, expand $\delta^1(x)$ as
\[ \delta^1(x) = \sum_{\mathbf{y}\in\mathcal{G}} c_{xy} \otimes \mathbf{y}, \]
for some choice of $c_{xy}$. Here the tensor product is implicitly taken over
$\mathbf{k}$, and as a result, there is some flexibility in the choice of
$c_{xy}$. We generally choose $c_{xy}$ to consist of as few generators of $A$ as
possible, except when choosing $c_{xy}$ to be invertible whenever possible.

Now suppose that for some $a,b\in\mathcal{G}$, the coefficient $c_{ab}$ is
invertible in $A$, and $d(c_{ab})=0$. Then there is a new type $D$ module $M'$,
generated by $\mathcal{G}'=\mathcal{G}\setminus\{a,b\}$ and with type $D$ action
\begin{equation}
  \delta^{1'}(x) =
  \sum_{y\in\mathcal{G}'} (c_{xy} + c_{xb}c_{ab}^{-1}c_{ay})\otimes y,
\end{equation}
for any $x\in\mathcal{G}'$. The first part of each term in the sum is simply the
original $\delta^1$ (excluding terms involving $a$ and $b$). The second part is
as follows: for each ``zigzag'' in $M$ as shown in Figure \ref{fig:cancelex},
the term $c_{xb}c_{ab}^{-1}c_{ay}\otimes y$ is added to $\delta^{1'}{x}$. The
coefficient can be read out by ``following the arrows'' from $x$ to $y$,
treating a reversed arrow as taking inverse.

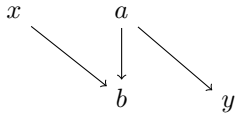
\begin{figure} [h!]
  \centering
  \begin{tikzpicture}
    \matrix (cancellation) [matrix of math nodes]
    {
      x & [1cm] a & [1cm]  \\ [7mm]
      & b & y \\
    };
    \draw [->] (cancellation-1-1) -- (cancellation-2-2);
    \draw [->] (cancellation-1-2) -- (cancellation-2-2);
    \draw [->] (cancellation-1-2) -- (cancellation-2-3);
  \end{tikzpicture}
  \caption{Standard example of a ``zigzag'' in $M$. This becomes $x\to
    c_{xb}c_{ab}^{-1}c_{ay}\otimes y$ in $M'$.}
  \label{fig:cancelex}
\end{figure}

\begin{theorem} [Cancellation lemma for type $D$ modules]
  With the above definitions, the action $\delta^{1'}$ on $M'$ satisfies the
  type $D$ structure equation, and the resulting type $D$ module $M'$ is
  homotopy equivalent to $M$.
  \label{thm:canceld}
\end{theorem}

\begin{proof}
  We prove this by giving explicit type $D$ morphisms and homotopies, in terms
  of coefficients as for the type $D$ action. The necessary data are morphisms
  $f:M\to M'$ and $g:M'\to M$, and homotopy $h:M\to M$, satisfying the
  identities $f\circ g=\mathbb{I}_{M'}$ and $g\circ f=\mathbb{I}_{M} +
  h\circ\delta^1 + \delta^1\circ h$. The morphisms $f$ and $g$ are given by
  \begin{equation}
    f(a)=0,~f(b)=\sum_{y\in\mathcal{G'}}c_{ab}^{-1}c_{ay}\otimes y,
    \mathrm{~and~} f(x)=1\otimes x \mathrm{~for~} x\in\mathcal{G}'.
  \end{equation}
  \begin{equation}
    g(x)=1\otimes x+c_{xb}c_{ab}^{-1}\otimes a.
  \end{equation}
  Part of $f$ can be visualized using the zigzag by following arrows from $b$ to
  $y$. Likewise, part of $g$ can be visualized by following arrows from $x$ to
  $a$.

  The homotopy $h:M\to M$ is given by $h(b)=c_{ab}^{-1}\otimes a$ and $h(x)=0$
  for any $x\neq b$.

  It remains to verify that $\delta^{1'}$ satisfies the type $D$ structure
  equations, and the maps $f, g$, and $h$ satisfy the required identities. This
  can be done by converting the equations into coefficient form. For example,
  the type $D$ structure equation
  \begin{equation}
    (\mu_2\otimes\mathbb{I}_M)\circ(\mathbb{I}_A\otimes\delta^1)\circ\delta^1
    +(\mu_1\otimes\mathbb{I}_M)\circ\delta^1=0, \nonumber
  \end{equation}
  when written in terms of coefficients, becomes
  \begin{equation}
    d(c_{xy})+\sum_{z\in\mathcal{G}} c_{xz}c_{zy}=0.
    \label{eq:strm}
  \end{equation}
  for any $x, y\in\mathcal{G}$.

  The structure equation for a type $D$ morphism $\phi:M\to N$ is:
  \begin{equation}
    (\mu_2\otimes\mathbb{I}_N)\circ(\mathbb{I}_A\otimes\delta^1_N)\circ\phi^1+
    (\mu_2\otimes\mathbb{I}_N)\circ(\mathbb{I}_A\otimes\phi^1)\circ\delta^1_M+
    (\mu_1\otimes\mathbb{I}_N)\circ\phi^1 = 0. \nonumber
  \end{equation}
  For any $x\in\mathcal{G}(M)$ and $y\in\mathcal{G}(N)$, let $\phi_{xy}$ be the
  coefficient of $y$ in $\phi^1(x)$. Then, applying the above equation to an
  arbitrary generator $x$ of $M$, we see that the structure equation is
  equivalent to
  \begin{equation}
    d(\phi_{xy})+\sum_{z'\in\mathcal{G}(N)} \phi_{xz'}c_{z'y,N}+\sum_{z\in\mathcal{G}(M)} c_{xz,M}\phi_{zy}=0
    \label{eq:dmorstr}
  \end{equation}
  for any $y\in\mathcal{G}(N)$.

  The composition of two morphisms $\phi:M\to N$ and $\psi:N\to P$ is given by:
  \[ (\psi\circ\phi)^1 =
  (\mu_2\otimes\mathbb{I}_P)\circ(\mathbb{I}_A\otimes\psi^1)\circ\phi^1. \] In
  terms of coefficients, this is:
  \begin{equation}
    (\psi\circ\phi)_{xy} = \sum_{z\in\mathcal{G}(N)} \phi_{xz}\psi_{zy},
  \end{equation}
  for any $x\in\mathcal{G}(M)$ and $y\in\mathcal{G}(P)$.

  It is then routine to verify these equations, using the assumption that
  $c_{ab}$ is invertible and $d(c_{ab})=0$.
\end{proof}

The cancellation lemma for type $\DA$ bimodules follows from that for type $D$
modules, by viewing type $\DA$ bimodules over $A'$ and $A$ as type $D$ modules
over $\cob(A)\otimes A'$ (see Remark 2.2.35 in \cite{LOT10a}).

\begin{definition}
  Given a strand algebra $A$, let $A_+$ be the sub-$dg$-algebra of $A$ generated
  by the non-idempotent generators. The cobar resolution $\mathrm{Cob}(A)$ is
  defined as $T^*(A_+[1]^*)$, the tensor algebra of the dual of $A_+$. This can
  be given the structure of a dg-algebra, with product being the one on the
  tensor algebra, and the differential consisting of the following arrows:
  \begin{itemize}
  \item $a_1^*\otimes\cdots\otimes b^*\otimes\cdots\otimes a_k^* \to
    a_1^*\otimes\cdots\otimes a_i^*\otimes\cdots\otimes a_k^*$, for each $i$ and
    term $b$ in $da_i$,
  \item $a_1^*\otimes\cdots\otimes a_i^*\otimes\cdots\otimes a_k^* \to
    a_1^*\otimes\cdots\otimes b^*\otimes b'^*\otimes\cdots\otimes a_k^*$, for
    each $i$ and generators $b, b'$ such that $bb'=a_i$.
  \end{itemize}
  Furthermore, we denote $\cob(A)$ to be the completion of $\mathrm{Cob}(A)$
  with respect to the length filtration (that is, an element of $\cob(A)$ is a
  formal sum of elements in $(A_+[1]^*)^{\otimes i}$ for possibly infinitely
  many $i$).
\end{definition}

The category of type $\DA$ bimodules over $A'$ on the $D$-side and $A$ on the
$A$-side is equivalent to the category of type $D$ modules over $\cob(A)\otimes
A'$, where the arrow
\[ \delta^1_{1+i}: (x; a_1, \dots, a_i) \to a'\otimes y \] in the action of a
type $\DA$ bimodule $M$ corresponds to the arrow
\[ \delta^1: x \to (a_1^*\otimes\cdots\otimes a_i^*)\otimes a'\otimes y \] in
the action of the type $D$ module corresponding to $M$.

Using this correspondence, we can define ``coefficients'' on a type $\DA$
bimodule:

\begin{definition}\label{def:dacoeff}
  Given two generators $x,y$ of $M$, define the coefficient $C_{xy}$ to be the
  formal sum, in $\cob(A')\otimes A$, of $(a_1^*\otimes\dots\otimes
  a_i^*)\otimes a'$ over all arrows of the form $\delta^1_{1+i}: (x;
  a_1,\dots,a_i)\to a'\otimes y$. As in the type $D$ case, we choose $a'$ to be
  invertible whenever possible when writing the action in terms of arrows.
\end{definition}

This allows us to state the cancellation lemma for type $\DA$ bimodules,
following immediately from the cancellation lemma in the type $D$ case, and the
equivalence of categories.

\begin{theorem} [Cancellation lemma for type $\DA$ bimodules]
  Let $^{A'}M_A$ be a type $\DA$ bimodule, with a fixed set $\mathcal{G}$ of
  generators. Suppose there are $x,y\in\mathcal{G}$ such that $C_{xy}=1\otimes
  a$ with $a\in A$ invertible and $da=0$. Then $C_{xy}^{-1}=1\otimes a^{-1}$,
  and the type $\DA$ bimodule $M'$ generated by
  $\mathcal{G}'=\mathcal{G}\setminus\{x,y\}$, and with coefficients
  $C'_{ab}=C_{ab}+C_{ay}C_{xy}^{-1}C_{xb}$, is homotopy equivalent to $M$.
  \label{thm:cancelda}
\end{theorem}

We end with a remark on grading. If $M$ is graded by a grading set $S_M$, and if
every generator being cancelled is homogeneous in grading, then $M'$ is also
graded by $S_M$, with the grading of each generator in $M'$ equal to the grading
of the corresponding generator in $M$. The arrows that are added to $M'$ satisfy
the grading constraints, because they come from traversing a zig-zag as in
Figure \ref{fig:cancelex}, where each of the three arrows in the zig-zag satisfy
the grading constraints. The homogeneity condition of the cancelled generators
will be automatically satisfied in our case.

\subsection{Characterization of the identity bimodule}
\label{sec:idrecognize}

In this section, we prove two lemmas describing conditions under which we can
assert a type $\DA$ bimodule $^{\sla(\slz)}M_{\sla(\slz)}$ is homotopy
equivalent to the identity bimodule $^{\sla(\slz)}\mathbb{I}_{\sla(\slz)}$. The
main result we use is the characterization of $\hatdd(\iz)$ given in
\cite{LOT10c}. We will start by reviewing that result here.

\begin{definition}[{\cite[Definition 3.1]{LOT10c}}]\label{def:diagonalalg}
  The \emph{diagonal subalgebra} of $\sla(\slz)\otimes\sla(-\slz)$ is the
  algebra generated by $a\otimes b$, where $a$ and $b$ satisfy the following
  conditions: $\mathrm{mult}(a)=\mathrm{mult}(\overline{b})$, the left
  idempotents of $a$ and $b$ are complementary, and the right idempotents of $a$
  and $b$ are complementary.
\end{definition}

\begin{proposition}[{\cite[Proposition 3.8, Proof of Theorem 1]{LOT10c}}]
  \label{thm:ddrecognize}
  Let $M$ be a left type $\DD$ bimodule over $\sla(\slz)$ and $\sla(-\slz)$,
  where $\slz$ has genus greater than one. Suppose $M$ satisfies the following
  conditions, then $M$ is isomorphic to $\hatdd(\iz)$.
  \begin{enumerate}
  \item The generators of $M$ are in one-to-one correspondence with the
    idempotents of $\sla(\slz)$, so that the generator corresponding to
    idempotent $i$ has (type $D$) idempotents $i$ and $\overline{o(i)}$.
  \item For any arrow $\mathbf{x}\to a\otimes b\otimes\mathbf{y}$ in the
    differential of $M$, the element $a\otimes b$ lies in the diagonal
    subalgebra.
  \item $M$ is graded by a $\lambda$-free grading set $S$, with a left-right
    $G(\slz)$-$G(\slz)$ action.
  \item The differential in $M$ contains all arrows of the form
    \[ \mathbf{x}\to a(\xi)\otimes\overline{a(\xi)}\otimes\mathbf{y}, \]
    where $\xi$ is a length-1 chord.
  \end{enumerate}
  In the case where $\slz$ has genus one, if $M$ satisfies an additional
  stability condition, in the sense of \cite[Definition 1.8]{LOT10c}, then we
  can still conclude that $M=\hatdd(\iz)$.
\end{proposition}

The following result will be used in the proof of the second lemma:
\begin{proposition}\label{prop:idtoexistshortaction}
  Suppose a type $DA$ bimodule $\tensor[^{\sla(\slz)}]{M}{_{\sla(\slz)}}$
  satisfies the following two conditions:
  \begin{enumerate}
  \item $M$ is homotopy equivalent to the identity bimodule
    $^{\sla(\slz)}\mathbb{I}_{\sla(\slz)}$.
  \item The generators of $M$ are in one-to-one correspondence with the
    idempotents of $\sla(\slz)$, so that the generator corresponding to
    idempotent $i$ has both left (type $D$) and right (type $A$) idempotent
    equal to $i$.
  \end{enumerate}
  Then the type $\DA$ action on $M$ contains all arrows of the form
  \begin{equation}\label{eq:arrowtoshowexist}
    \delta_2^1: (\mathbf{x}, a(\xi)) \to a(\xi) \otimes \mathbf{y},
  \end{equation}
  where $\xi$ is a length-1 chord.
\end{proposition}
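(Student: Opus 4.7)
The plan is to strengthen the hypothesis by showing $M$ is in fact isomorphic to $^{\sla(\slz)}\mathbb{I}_{\sla(\slz)}$, not merely homotopy equivalent, so that every arrow of the identity bimodule appears directly in $M$. The key observation is that, because of the idempotent constraints on the generators of $M$, the cancellation lemma for type $DA$ bimodules (Theorem~\ref{thm:cancelda}) can never reduce the generator count of $M$.

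First I would argue that no cancellation applies to $M$. Suppose for contradiction that two distinct generators $\mathbf{x}_i$ and $\mathbf{x}_j$ satisfied $C_{\mathbf{x}_i\mathbf{x}_j} = 1 \otimes a$ for some $a \in \sla(\slz)$ that is invertible in the sense of Theorem~\ref{thm:cancelda}. In $\sla(\slz)$, the only invertible elements are sums of mutually orthogonal idempotents, each of which has equal left and right idempotents. On the other hand, the hypothesis that $\mathbf{x}_i$ has both left and right idempotent $i$ forces the left idempotent of $a$ to be $i$ and its right idempotent to be $j$. These two facts together yield $i = j$ and hence $\mathbf{x}_i = \mathbf{x}_j$, contradicting distinctness. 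Therefore $M$ admits no elementary reduction and is already minimal.

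Since $\mathbb{I}$ is itself minimal (it has no $\delta^1_1$ arrows, so $C_{ij}$ between distinct generators has no $1\otimes(\text{invertible})$ contribution) and has exactly the same number of generators as $M$, the uniqueness of minimal models in the category of type $DA$ bimodules over $\sla(\slz)$ gives $M \cong \mathbb{I}$ as type $DA$ bimodules. Since $\mathbb{I}$ contains the arrows $\delta^1_2(i, a(\xi)) \to a(\xi) \otimes j$ for every length-1 chord $\xi$ and compatible idempotents $i, j$, the same arrows appear in $M$, which is the desired conclusion.

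The main obstacle is justifying the uniqueness-of-minimal-models step in this type $DA$ setting, since the homotopy equivalence $M \simeq \mathbb{I}$ need not a priori be realized by a sequence of elementary cancellation moves. A direct workaround is to take the homotopy equivalence $f : M \to \mathbb{I}$, normalize its first component by a change of basis so that the length-zero part of $f_1(\mathbf{x}_i)$ is $i \otimes i$, and then induct on the length of the algebra input in the type $DA$ morphism structure equations for $f$. Applied to $(\mathbf{x}_i, a(\xi))$ the structure equation produces the term $a(\xi) \otimes j$ on the $\mathbb{I}$ side with coefficient $1$; no other arrow in $M$ of the appropriate length and idempotents can supply that term, so the required arrow $\delta^1_2(\mathbf{x}_i, a(\xi)) \to a(\xi) \otimes \mathbf{x}_j$ must be present in $M$.
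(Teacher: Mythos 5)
Your main route does not go through. The no-cancellation observation is correct (an invertible element of $\sla(\slz)$ has multiplicity zero, hence is a sum of idempotents, and the idempotent bookkeeping then forces $x=y$), but this weak minimality does not determine the bimodule up to the kind of isomorphism you need. The paper's own representative $\widehat{CFDD}(\iz)\boxtimes\widehat{CFAA}(\iz)$ satisfies both hypotheses of the proposition and admits no cancellable pair, yet it carries a genuine two-input arrow (see the fourth case in Section \ref{subsec:typedaid}), so it is not strictly isomorphic to $\mathbb{I}$; at best a ``uniqueness of minimal models'' statement in this setting would produce an isomorphism with higher-order components, and such isomorphisms do not preserve the list of arrows. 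Concluding from ``$M$ is isomorphic (or homotopy equivalent) to $\mathbb{I}$'' that the particular arrows $\delta^1_2(\mathbf{x},a(\xi))\to a(\xi)\otimes\mathbf{y}$ literally occur in $M$ is essentially the content of the proposition itself, so this step begs the question rather than proving it.

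Your fallback via the morphism structure equation is closer to a genuine proof, but its key assertion --- ``no other arrow in $M$ of the appropriate length and idempotents can supply that term'' --- is precisely what must be established, and it is not free. The proposition assumes no grading hypothesis (that only enters later, in Lemmas \ref{lem:idrecognize1} and \ref{lem:idrecognize2}), so $M$ may have $\delta^1_1$ arrows and the equivalence $f$ may have nontrivial components $f_2, f_3,\dots$; in the coefficient of $a(\xi)\otimes j$ one must then account for cross terms such as an idempotent-output arrow of $\delta^1_{M,2}(\mathbf{x}_i,a(\xi))$ multiplied against a multiplicity-one term of $f_1$, a multiplicity-one $\delta^1_{M,1}$ arrow multiplied against an $f_2$ term consuming $a(\xi)$, and $\mu_1$ applied to the coefficient of $f_2(\mathbf{x}_i,a(\xi))$; even the normalization of the idempotent part of $f_1$ requires a short argument. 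None of this is addressed. The paper avoids the bookkeeping entirely: it builds two-generator test modules $T_A$ and $T_D$ from the chord $a(\xi)$ and observes that $T_A\boxtimes N\boxtimes T_D$ is a two-generator complex whose differential is nonzero exactly when the arrow (\ref{eq:arrowtoshowexist}) is present in $N$; since box products preserve homotopy equivalences and the complex for $N=\mathbb{I}$ has vanishing homology, the same holds for $N=M$, forcing the arrow. If you want to salvage your approach, you would either need to carry out the full cross-term analysis above or switch to a homotopy-invariant detection argument of this kind.
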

\begin{proof}
  Consider generators $\mathbf{x},\mathbf{y}$ corresponding to idempotents
  $i,j\in\sla(\slz)$, and $\xi$ a length-1 chord, such that the idempotent
  matches in the arrow (\ref{eq:arrowtoshowexist}). We want to show that
  (\ref{eq:arrowtoshowexist}) does exist as an arrow.

  Let $T_D$ be a type $D$ module over $\sla(\slz)$ with two generators
  $\mathbf{x}_D$ and $\mathbf{y}_D$, whose idempotents are $i$ and $j$, such
  that $\delta^1(\mathbf{x}_D)=a(\xi)\otimes\mathbf{y}_D$ and
  $\delta^1(\mathbf{y}_D)=0$. Since $d(a(\xi))=0$, it is clear that $\delta^1$
  satisfies the type $D$ structure equation.

  Likewise, let $T_A$ be the $A_\infty$-module over $\sla(\slz)$ with two
  generators $\mathbf{x}_A$ and $\mathbf{y}_A$ whose idempotents are $i$ and
  $j$, and $m_{1,1}: (\mathbf{x}_A,a(\xi))\to \mathbf{y}_A$ is the only arrow in
  the $A_\infty$ action.

  Consider the tensor product $T_A\boxtimes N\boxtimes T_D$, with $N=M$ or
  $N=\mathbb{I}$. This is a chain complex with two generators
  $\mathbf{x}_A\otimes\mathbf{x}\otimes\mathbf{x}_D$ and
  $\mathbf{y}_A\otimes\mathbf{y}\otimes\mathbf{y}_D$, and there is an arrow
  between these two if and only if the arrow (\ref{eq:arrowtoshowexist}) exists
  in $N$ for the given $\mathbf{x},\mathbf{y}$ and $a(\xi)$. In particular,
  $T_A\boxtimes\mathbb{I}\boxtimes T_D$ has zero homology. By assumption,
  $M\simeq\mathbb{I}$, so $T_A\boxtimes M\boxtimes T_D$ must also have zero
  homology. This shows the arrow (\ref{eq:arrowtoshowexist}) exists in $M$.
\end{proof}

\paragraph{Note:} The argument in the above proof only works when $\xi$ has
length 1. If otherwise, we may have $d(a(\xi))\neq 0$, and $\delta^1$ on $T_D$
no longer satisfies the type $D$ structure equation. Indeed, in the case where
$\xi$ has length 2, we may have:
\[
d\left(\begin{sdn}{3}\strandup{1}{3}\singlehor{2}\end{sdn}\right) =
\begin{sdn}{3}\strandup{1}{2}\strandup{2}{3}\end{sdn} =
\begin{sdn}{3}\strandup{2}{3}\singlehor{1}\end{sdn} \cdot
\begin{sdn}{3}\strandup{1}{2}\singlehor{3}\end{sdn}.
\]
Hence, if there are generators $\mathbf{x}_D$ and $\mathbf{y}_D$ in $T_D$ with
arrow $\mathbf{x}_D\to a(\xi)\otimes\mathbf{y}_D$, where the idempotents of
$\mathbf{x}_D$ and $\mathbf{y}_D$ contain the middle point, then there must be
an additional generator $\mathbf{z}_D$ with appropriate arrows from
$\mathbf{x}_D$ to $\mathbf{z}_D$ and from $\mathbf{z}_D$ to $\mathbf{y}_D$, so
that the type $D$ structure equation remains satisfied. This is why we may have,
for example, arrow (\ref{eq:dalen2.3}) instead of (\ref{eq:dalen2.2}) in
$^{\sla(\slz)}M_{\sla(\slz)}$, according the computations in Section
\ref{subsec:typeaaid}.

We now state and prove the lemmas on the characterization of
$^{\sla(\slz)}\mathbb{I}_{\sla(\slz)}$.

\begin{lemma}\label{lem:idrecognize1}
  Let $M={^{\sla(\slz)}M_{\sla(\slz)}}$ be a left-right type $\DA$ bimodule over
  $\sla(\slz)$-$\sla(\slz)$.  Suppose $M$ satisfies the following properties,
  then $M$ is homotopy equivalent to the identity bimodule
  $^{\sla(\slz)}\mathbb{I}_{\sla(\slz)}$.
  \begin{itemize}
  \item (ID-1). The generators of $M$ are in one-to-one correspondence with the
    idempotents of $\sla(\slz)$, so that the generator corresponding to
    idempotent $i$ has both left (type $D$) and right (type $A$) idempotent
    equal to $i$.
  \item (ID-2). $M$ can be graded by a principal left-right
    $G(\mathcal{Z})$-$G(\mathcal{Z})$ set, such that the induced map
    $\phi\in\mathrm{Out}(G(\mathcal{Z}),G(\mathcal{Z}))$ (as in \cite[Lemma
    6.4]{LOT10c}) is the identity map, and there is a choice of refined relative
    grading with every generator having grading zero. (The choice of grading
    refinement for $G(\mathcal{Z})$ is arbitrary but must be the same on both
    sides).
  \item (ID-3). The type $\DA$ action on $M$ contains all arrows of the form
    \[ \delta_2^1: (\mathbf{x}, a(\xi)) \to a(\xi) \otimes \mathbf{y}, \] where
    $\xi$ is a length-1 chord.
  \item (ID-4). $M$ is stable in the sense of \cite[Definition 1.8]{LOT10c}
    (this condition is only necessary when $\mathcal{Z}$ is the unique genus-1
    pointed matched circle).
  \end{itemize}
\end{lemma}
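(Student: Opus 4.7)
The plan is to reduce to the characterization of $\widehat{CFDD}(\iz)$ in Theorem~\ref{thm:ddrecognize}. Set $N = M \boxtimes \widehat{CFDD}(\iz)$, where the tensor contracts the right $A_\infty$-action of $M$ with the left type $D$-action of $\widehat{CFDD}(\iz)$ by $\sla(\slz)$. Then $N$ is a left type $DD$ bimodule over $\sla(\slz)$ and $\sla(-\slz)$. I will verify the four hypotheses of Theorem~\ref{thm:ddrecognize} for $N$, concluding $N \simeq \widehat{CFDD}(\iz)$. Tensoring on the right with $\widehat{CFAA}(\iz)$ and using \eqref{eq:ddtensoraa} (established combinatorially as Corollary~\ref{cor:ddtimesdais1}) then yields
\[
M \;\simeq\; M \boxtimes \widehat{CFDD}(\iz) \boxtimes \widehat{CFAA}(\iz) \;=\; N \boxtimes \widehat{CFAA}(\iz) \;\simeq\; \widehat{CFDD}(\iz) \boxtimes \widehat{CFAA}(\iz) \;\simeq\; \mathbb{I}.
\]

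Verification of conditions (1), (3), (4) of Theorem~\ref{thm:ddrecognize} is relatively direct. For (1): the generators of $N$ are pairs whose idempotents on the shared $\sla(\slz)$ side match; combining (ID-1) with the description of $\widehat{CFDD}(\iz)$-generators as complementary pairs $(i,\overline{o(i)})$ gives one generator per idempotent $i$, with left type $D$ idempotents $i$ and $\overline{o(i)}$. For (3): the grading set of $N$ inherits from the grading sets of $M$ and $\widehat{CFDD}(\iz)$, and $\lambda$-freeness follows from (ID-2). For (4): the required arrow $\mathbf{x} \to a(\xi) \otimes \overline{a(\xi)} \otimes \mathbf{y}$ (for a length-1 chord $\xi$) is produced by combining the (ID-3) arrow $\delta_2^1(\mathbf{x}_M, a(\xi)) \to a(\xi) \otimes \mathbf{y}_M$ of $M$ with the corresponding term of \eqref{eq:ddop} in $\widehat{CFDD}(\iz)$. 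In the genus-$1$ case, stability (ID-4) passes to $N$.

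The main obstacle is verifying condition (2): every arrow $\mathbf{x} \to a \otimes b \otimes \mathbf{y}$ in $N$ should have $a \otimes b$ in the diagonal subalgebra of Definition~\ref{def:diagonalalg}. Such an arrow factors as a $(k{+}1)$-input action $\delta_{k+1}^1(\mathbf{x}_M; c_1,\dots,c_k) \to a \otimes \mathbf{y}_M$ in $M$ coupled with $k$ applications of the type $D$ action of $\widehat{CFDD}(\iz)$ outputting $c_i \otimes d_i$ at step $i$, so that $b = d_1 \cdots d_k$. Diagonality of $\widehat{CFDD}(\iz)$ gives $\mathrm{mult}(c_i) = \mathrm{mult}(\overline{d_i})$ for each $i$, hence $\mathrm{mult}(c_1 \cdots c_k) = \mathrm{mult}(\bar{b})$. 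The nontrivial step is then $\mathrm{mult}(a) = \mathrm{mult}(c_1 \cdots c_k)$. This is where (ID-2) enters: since both $\mathbf{x}_M$ and $\mathbf{y}_M$ lie at grading zero and the induced outer automorphism on $G(\slz)$ is the identity, extracting the $H_1(Z',\mathbf{a})$-component of the grading equation for the arrow in $M$ forces exactly the required balance (the factor $\lambda^{k-1}$ only affects the $\mathrm{inv}$-component). Idempotent compatibility is automatic from (ID-1), so $a \otimes b$ lies in the diagonal subalgebra, completing the verification and hence the proof.
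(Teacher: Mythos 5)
Your argument is essentially the paper's: form $M\boxtimes\widehat{CFDD}(\iz)$, check the four hypotheses of Theorem \ref{thm:ddrecognize} (with condition (2) coming from the grading-zero/identity-outer-automorphism hypothesis (ID-2) forcing the multiplicity balance, exactly as in the paper), and conclude that the tensor product is isomorphic to $\widehat{CFDD}(\iz)$. Your verification of conditions (1), (3), (4) and of the diagonal-subalgebra condition is sound, and your bookkeeping of how an arrow of $N$ decomposes into a $(k+1)$-input $DA$ arrow of $M$ fed by $k$ iterations of the $\widehat{CFDD}(\iz)$ action is correct.

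The one genuine problem is the final de-tensoring step. You invoke Corollary \ref{cor:ddtimesdais1} to pass from $N\simeq\widehat{CFDD}(\iz)$ to $M\simeq\mathbb{I}$, but within this paper Corollary \ref{cor:ddtimesdais1} is itself proved by applying Lemma \ref{lem:idrecognize1} to the bimodule $\widehat{CFDD}(\iz)\boxtimes\widehat{CFAA}(\iz)$, so citing it here is circular. The repair is immediate and is what the paper does: since $\widehat{CFDD}(\iz)$ is quasi-invertible (a fact available independently of this lemma, not via Corollary \ref{cor:ddtimesdais1}), the isomorphism $M\boxtimes\widehat{CFDD}(\iz)\cong\widehat{CFDD}(\iz)\simeq\mathbb{I}\boxtimes\widehat{CFDD}(\iz)$ already yields $M\simeq\mathbb{I}$ by tensoring with any quasi-inverse; no identification of that quasi-inverse with $\widehat{CFAA}(\iz)$ is needed. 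With that substitution your proof matches the paper's.
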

\begin{proof}
  Consider the type $\DD$ bimodule $M_{\DD}=M\boxtimes\hatdd(\iz)$. We check
  that $M_{\DD}$ satisfies all the conditions of Proposition
  \ref{thm:ddrecognize}, which would show that $M_{\DD}$ is isomorphic to
  $\hatdd(\iz)$. Since $\hatdd(\iz)$ is quasi-invertible, this implies
  $M\simeq\mathbb{I}$.

  Using the fact that relative grading can be chosen on $\hatdd(\iz)$ so that
  every generator has grading zero, Condition (ID-2) on the grading of $M$
  implies a similar condition on the grading of $M_{\DD}$. The constraint that
  the type $\DD$ action must respect the grading implies the following: for each
  arrow
  \[ \mathbf{x}\to a\otimes b\otimes\mathbf{y}, \] the multiplicities of $a$ and
  $\overline{b}$ must be the same. The idempotent conditions on the diagonal
  subalgebra follow from the constraints on idempotents on each arrow, and the
  fact that both $\mathbf{x}$ and $\mathbf{y}$ have complementary
  idempotents. This verifies condition (2) in Proposition \ref{thm:ddrecognize}.

  The other deductions are trivial. (ID-1), (ID-2), and (ID-3) imply conditions
  (1), (3), and (4), respectively. Condition (ID-4) implies the stability of
  $M_{\DD}$, needed for the genus 1 case.
\end{proof}

The condition (ID-3) in the previous lemma can still be difficult to verify in
actual computations. It is possible to replace it as follows.

\begin{lemma}\label{lem:idrecognize2}
  With the same notation as in Lemma \ref{lem:idrecognize1}, if $M$ satisfies
  the conditions (ID-1), (ID-2), (ID-4), and the following condition, then it is
  homotopy equivalent to $\mathbb{I}$.
  \begin{itemize}
  \item (ID-3') $M$ is invertible, with a quasi-inverse $M'$ that satisfies
    conditions (ID-1) and (ID-2).
  \end{itemize}
\end{lemma}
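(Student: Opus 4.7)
My plan is to reduce the lemma to Lemma \ref{lem:idrecognize1}. Since $M$ already satisfies (ID-1), (ID-2), and (ID-4), the only condition I need to derive is (ID-3), namely the existence of all short arrows of the form $\delta^1_2(\mathbf{x}, a(\xi)) \to a(\xi) \otimes \mathbf{y}$ in $M$ for length-$1$ chords $\xi$. The key tool is to transfer short-arrow information through the box tensor $N := M \boxtimes M'$, which by (ID-3') satisfies $N \simeq \tensor[^{\sla(\slz)}]{\mathbb{I}}{_{\sla(\slz)}}$.

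Observe first that the generators of $N$ are pairs $\mathbf{x}_i \otimes \mathbf{x}_i'$ indexed by the idempotents $i$ of $\sla(\slz)$, and by (ID-1) applied to both $M$ and $M'$ each such pair has matching left and right idempotents both equal to $i$. Hence $N$ satisfies (ID-1), and Proposition \ref{prop:idtoexistshortaction} yields every short arrow
\[
\delta^1_2(\mathbf{x}_i \otimes \mathbf{x}_i',\, a(\xi)) \to a(\xi) \otimes (\mathbf{x}_j \otimes \mathbf{x}_j')
\]
in $N$, for each length-$1$ chord $\xi$ and compatible pair $(i, j)$.

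Next I would unpack the box-tensor formula for such an arrow. Any contribution arises from iteratively applying $\delta^1$ on $M'$ with the single right input $a(\xi)$, producing intermediate algebra elements $a_1, \ldots, a_k$, and then feeding these into a single $\delta^1_{1+k}$ on $M$. Under the refined gradings provided by (ID-2), multiplicities must add: $\sum_\ell \mathrm{mult}(a_\ell) = \mathrm{mult}(a(\xi))$, which has total length $1$. An intermediate step $\delta^1_1$ on $M'$ (consuming no input) would require an algebra element of grading $\lambda^{-1}$ and multiplicity $0$, which does not exist in $\sla(\slz)$, so multi-step decompositions are ruled out. Hence $k = 1$, and the idempotent constraints from (ID-1) pin $a_1$ down to the unique summand $\xi_{i \to j}$ of $a(\xi)$ compatible with $\mathbf{x}_i'$ and $\mathbf{x}_j'$. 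The coefficient of the short arrow in $N$ is therefore a single product of the coefficients of $\delta^1_2(\mathbf{x}_i', a(\xi)) \to \xi_{i \to j} \otimes \mathbf{x}_j'$ in $M'$ and $\delta^1_2(\mathbf{x}_i, \xi_{i \to j}) \to \xi_{i \to j} \otimes \mathbf{x}_j$ in $M$.

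Working over $\mathbb{F}_2$, existence of the short arrow in $N$ therefore forces both factor arrows to exist, so $M$ satisfies (ID-3) and Lemma \ref{lem:idrecognize1} applies to yield $M \simeq \tensor[^{\sla(\slz)}]{\mathbb{I}}{_{\sla(\slz)}}$. The main obstacle is the grading bookkeeping in the third paragraph: one must rule out every multi-step decomposition (including those that use a $\delta^1_1$ arrow whose algebra output is an idempotent of length $0$) and verify that, after the single-step reduction, the intermediate element is indeed uniquely determined by the chosen idempotents. This comes down to a direct calculation using the explicit refined grading formula on $\sla(\slz)$ reviewed in Section \ref{sec:combconstr}.
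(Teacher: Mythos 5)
Your argument is correct and follows essentially the same route as the paper: both reduce (ID-3') to (ID-3) by applying Proposition \ref{prop:idtoexistshortaction} to $M\boxtimes M'\simeq\mathbb{I}$ and by using the grading-zero condition from (ID-2) to rule out $\delta^1_1$-type contributions and to pin down the intermediate algebra element, so that over $\mathbb{F}_2$ the short arrow in the product forces the short arrow in $M$, after which Lemma \ref{lem:idrecognize1} applies. The only difference is bookkeeping: the paper first observes $\delta^1_1=0$ and invokes \cite[Lemma 2.2.50]{LOT10a} to write $M,M'$ as morphism bimodules $[\phi],[\phi']$ with $M\boxtimes M'=[\phi'\circ\phi]$, whereas you expand the box tensor product differential directly --- the underlying single-step computation is the same.
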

\begin{proof}
  It suffices to show that (ID-3'), together with the other conditions, imply
  (ID-3). We first show that $\delta_1^1=0$ on both $M$ and $M'$. That is, there
  are no arrows of the form
  \[ \delta_1^1: \mathbf{x} \to a\otimes \mathbf{y}. \] By the grading
  constraints on any arrow, the algebra generator $a$ must have multiplicity
  zero. That is, it must be an idempotent in $\sla(\slz)$. However, this would
  mean that the grading of $\mathbf{x}$ and $\mathbf{y}$ differ by $\lambda$,
  contradicting the assumption that all generators in $M$ (or $M')$ have grading
  zero.

  Both $M$ and its quasi-inverse $M'$ also satisfy condition (ID-1), so by
  \cite[Lemma 2.2.50]{LOT10a}, they can be represented as
  $^{\sla(\slz)}[\phi]_{\sla(\slz)}$ and $^{\sla(\slz)}[\phi']_{\sla(\slz)}$
  respectively, for $A_\infty$-algebra morphisms
  $\phi,\phi':\sla(\slz)\to\sla(\slz)$. Then $M'\boxtimes M$ is represented by
  $^{\sla(\slz)}[\phi'\circ\phi]_{\sla(\slz)}$.

  Since $M'\boxtimes M$ satisfies the grading condition (ID-2), the map
  $\phi'\circ\phi$ must preserve gradings. This means that for $a=a(\xi)$ where
  $\xi$ is a length-1 chord, the only possible term in $(\phi'\circ\phi)(a)$ is
  $a$. Since $M\boxtimes M'$ is homotopy equivalent to identity, by Proposition
  \ref{prop:idtoexistshortaction}, we have $(\phi'\circ\phi)(a)=a$, which
  implies $\phi(a)\neq 0$. By the same grading argument, either $\phi(a)=0$ or
  $\phi(a)=a$. So we must have $\phi(a)=a$. This shows $\phi$ is the identity
  map on length-1 chords, which implies condition (ID-3).
\end{proof}

\subsection{Combinatorial model of
  $\widehatit{CFAA}(\iz)$}\label{sec:cfaaconstruction}
In this section, we review the construction of the combinatorial model
$\hataa(\iz)$ of $\widehatit{CFAA}(\iz)$ given in \cite{BZ1}, in preparation for
computing some arrows in $\hatda(\tau)$ for arcslides $\tau$ in the next
section.

The construction begins with Equation (\ref{eq:typeaaid}). After expanding the
definitions, this gives a model of $\widehatit{CFAA}(\iz)$ generated by the set
of pairs $[a_1,a_2]$, where $a_1$ and $a_2$ are generators of $\sla(\slz)$, such
that the initial idempotents of $a_1$ and $a_2$ are complementary. The
differential and type $\AA$ action on these generators are given as Proposition
1 in \cite{BZ1}. The smaller model $\hataa(\iz)$ is obtained from this using
homological perturbation theory. This involves finding the homology $C'$ of $C$,
the chain complex underlying the larger model, and giving chain maps $f:C\to
C'$, $g:C'\to C$, and homotopy $H:C\to C$ verifying the homotopy equivalence
between $C$ and $C'$. The homology $C'$ is generated by those $[a_1,a_2]$ where
both $a_1$ and $a_2$ are idempotents (which are then complementary). The chain
maps $f$ and $g$ are the obvious projection and inclusion maps. The homotopy $H$
is summarized in Figures 6 and 9 in \cite{BZ1}.

From homological perturbation theory, we obtain the following description of the
smaller model: the $A_\infty$-bimodule $\hataa(\iz)_{\sla(-\slz),\sla(\slz)}$ is
generated by pairs of complementary idempotents $i'\otimes i$, where
$i\in\sla(\slz)$ and $i'=\overline{o(i)}\in\sla(-\slz)$. The generator
$i'\otimes i$ has type $A$ idempotents $i'$ and $i$. Each arrow in the type
$\AA$ action of $\hataa(\iz)$ comes from a sequence of ``moves'' between
generators of $C$. There are three types of moves, the first two of which carry
a coefficient.
\begin{itemize}
\item Move $A_1$: If $c\overline{b'}\neq 0$, with $b'\in\sla(-\slz)$, move from
  $[c\overline{b'},a_2]$ to $[c,a_2]$ with coefficient $b'$.
\item Move $A_2$: If $a_2b\neq 0$, with $b\in\sla(\slz)$, move from $[a_1,a_2]$
  to $[a_1,a_2b]$ with coefficient $b$.
\item Move $H$: Apply one of the arrows in the homotopy map $H$.
\end{itemize}

Each arrow then corresponds to a sequence
$[a_{1,1},a_{1,2}],\dots,[a_{2n,1},a_{2n,2}]$ of generators of $C$, satisfying
the following conditions:
\begin{itemize}
\item $[a_{1,1},a_{1,2}]=[o(i),i]$, and $[a_{2n,1},a_{2n,2}]=[o(j),j]$, for some
  idempotents $i,j\in\sla(\slz)$.
\item Each $[a_{2k,1},a_{2k,2}]$ is obtained from $[a_{2k-1,1},a_{2k-1,2}]$ by
  applying either move $A_1$ or $A_2$.
\item Each $[a_{2k+1,1},a_{2k+1,2}]$ is obtained from $[a_{2k,1},a_{2k,2}]$ by
  applying move $H$.
\end{itemize}

Let $b_1',\dots,b_p'$ be the ordered sequence of coefficients for moves of type
$A_1$, and $b_1,\dots,b_q$ be the ordered sequence of coefficients for moves of
type $A_2$, then such a sequence of generators of $C$ gives rise to an arrow
\[ m_{1,p,q}: (i'\otimes i;b_1',\dots,b_p';b_1,\dots,b_q) \to j'\otimes j, \]
where $i'=\overline{o(i)}$ and $j'=\overline{o(j)}$.

An important property of $\hataa(\iz)$ which follows directly from this
construction is that for any arrow in the type $\AA$ action, the total
multiplicity of the $\sla(-\slz)$ inputs (that is, the sum of multiplicities of
$b_1',\dots,b_p'$) equals that of the $\sla(\slz)$ inputs (the sum of
multiplicities of $b_1,\dots,b_q$). From the definition using holomorphic
curves, this is clear since each arrow comes from a domain in the standard
Heegaard diagram of the identity diffeomorphism. We also note that $\hataa(\iz)$
can be given a refined relative grading where all generators have grading zero.

The definition of the homotopy map $H$ involves first defining a specific
ordering $<_{\slz}$ on the $4g-1$ intervals of the pointed matched circle
$\slz$. This means the determination of arrows is not ``local'', in the sense
that if we restrict to a certain interval of $\slz$, containing points paired
outside the interval, then the type $\AA$ arrows restricted to that interval may
depend on how $\slz$ is configured outside the interval. However, we note that
if all points are paired within the interval, then the ordering $<_{\slz}$ on
these points (and therefore the type $\AA$ arrows) is independent of outside
configurations (this follows directly from how the ordering $<_{\slz}$ is
defined). In particular, $\hataa(\mathbb{I}_\slz)$ behaves well with respect to
stabilization. That is, if $\mathring{\slz}=\slz\#\slz^1$, then
$\hataa(\mathbb{I}_\slz)$ is isomorphic to the appropriate restriction of
$\hataa(\mathbb{I}_{\mathring{\slz}})$.

%%% Local Variables:
%%% mode: latex
%%% TeX-master: "dacalc"
%%% End:

\subsection{Certain arrows in $\hatda$ of arcslides}
\label{sec:daoperations}

In this section we compute some of the arrows in $\hatda(\tau)$ for a general
arcslide $\tau$, using Equation (\ref{eq:arcslidedadef}). From its description
in the previous section, one can expect arrows in $\hataa(\iz)$ to be extremely
complicated in general. The same would then be true for arrows in
$\hatda(\tau)$. We manage this complexity by focusing only on arrows whose
algebra coefficients have a small total length (say length 1 or 2 on each
side). It turns out that these are sufficient to prove the necessary properties
of the box tensor products of $\hatda(\tau_i)$ that we will need to consider.

Since the algebra coefficients have small total length, the domain corresponding
to the arrow is supported in a small part of the Heegaard diagram. For
arcslides, the parts of the Heegaard diagram that we are particularly interested
in are the differences with the Heegaard diagram for the identity diffeomorphism
-- that is, around the points $b_1,c_1,c_2$ and $b_1'$.

One source of complexity comes from the fact that the definition of the homotopy
map $H$ in the construction of $\hataa(\iz)$ depends on the ordering
$<_\mathcal{Z}$ on the intervals of the pointed matched circle. In a local
situation, if we cannot tell which interval comes first in the ordering, we will
need to cover all possible cases. Note only the restriction of $<_\mathcal{Z}$
to the intervals covered by the algebra coefficients matter for determining the
arrows.

When we show a set of local arrows in a given local situation and restriction of
the ordering $<_\mathcal{Z}$, we intend to make the following assertions:
\begin{itemize}
\item There is an arrow for every way of extending the local arrow by completing
  the pointed matched circle and adding the appropriate number of horizontal
  lines to the algebra coefficients.
\item Every arrow in the bimodule action whose algebra coefficients lie within
  the area shown can be obtained by extending one of the local arrows.
\end{itemize}

We now begin with the simplest case: arrows in the type $\AA$ action on
$\hataa(\iz)$ where the algebra coefficients have length 1 on either side. The
coefficients must then cover the same interval. The sequence of pairs
$[a_{i,1},a_{i,2}]$ is:

\begin{equation}
  \begin{dsdn}{2}\lsinglehor{2}\rsinglehor{1}\end{dsdn}\overto{A_2}
  \begin{dsdn}{2}\lsinglehor{2}\rstrandup{1}{2}\end{dsdn}\overto{H}
  \begin{dsdn}{2}\lstrandup{1}{2}\rsinglehor{2}\end{dsdn}\overto{A_1}
  \begin{dsdn}{2}\lsinglehor{1}\rsinglehor{2}\end{dsdn}, \nonumber
\end{equation}
where the middle $H$ is the Case 3 of the homotopy map in the multiplicity-one
case given in \cite{BZ1}. This gives the arrow:
\begin{aaequation}{equation} \label{eq:aalen1}
  m_{1,1,1}: \left(
    \left[\begin{dsdn}{2}\lsinglehor{2}\rsinglehor{1}\end{dsdn}\right];
    \begin{sdn}{2}{7pt}\stranddown{1}{2}\end{sdn}~;~
    \begin{sdn}{2}{7pt}\strandup{1}{2}\end{sdn}\right) \to
  \left[\begin{dsdn}{2}\lsinglehor{1}\rsinglehor{2}\end{dsdn}\right].
\end{aaequation}

From (\ref{eq:aalen1}), we obtain a simple method of deriving arrows in
$\hatda(\tau)$ from arrows in $\hatdd(\tau)$, in cases where the second
coefficient of the type $\DD$ arrow has length 1 (the second algebra action is
the one that is involved in the box tensor product). For each type $\DD$ arrow
$\delta^1: \mathbf{x}\to a_1\otimes a_2\otimes \mathbf{y}$, where $a_2$ has
length 1, there corresponds a type $\DA$ arrow $\delta_2^1:
(\mathbf{x},\overline{a_2})\to a_1\otimes\mathbf{y}$, where by abuse of notation
we used the same symbol to denote corresponding generators of $\hatda(\tau)$ and
$\hatdd(\tau)$.

As an application, we give a combinatorial proof of the following:
\begin{corollary}\label{cor:ddtimesaais1}
  The tensor product $\hataa(\iz)\boxtimes\hatdd(\iz)$ is homotopy equivalent to
  $\mathbb{I}$.
\end{corollary}
\begin{proof}
  Directly check each of the conditions in Lemma \ref{lem:idrecognize1}. For
  Condition (ID-2), we use the refined relative grading on $\hataa(\iz)$ with
  all generators having grading zero. For Condition (ID-3), use the type $\AA$
  arrows computed here. For (ID-4), use the stabilization property of
  $\hataa(\iz)$ discussed at the end of Section \ref{sec:cfaaconstruction}.
\end{proof}

\subsubsection{Type $\AA$ on a size 2 interval, disjoint pairs case}
\label{subsec:typeaaid}
The next simplest case for type $\AA$ arrows is the size 2 interval. First, we
assume that no two of the three points are paired with each other. There are
four subcases, depending on whether the middle idempotent is occupied on the
left or on the right, and whether the lower or the upper interval comes first in
the ordering $<_\mathcal{Z}$.

\paragraph{Middle idempotent on the left, lower interval first in ordering.}
The only sequence covering the size 2 interval is:
\begin{equation}
  \begin{dsdn}{3}\lsinglehor{2}\lsinglehor{3}\rsinglehor{1}\end{dsdn}\overto{A_2}
  \begin{dsdn}{3}\lsinglehor{2}\lsinglehor{3}\rstrandup{1}{3}\end{dsdn}\overto{H}
  \begin{dsdn}{3}\lsinglehor{3}\lstrandup{1}{2}\rstrandup{2}{3}\end{dsdn}\overto{A_1}
  \begin{dsdn}{3}\lsinglehor{1}\lsinglehor{3}\rstrandup{2}{3}\end{dsdn}\overto{H}
  \begin{dsdn}{3}\lsinglehor{1}\lstrandup{2}{3}\rsinglehor{3}\end{dsdn}\overto{A_1}
  \begin{dsdn}{3}\lsinglehor{1}\lsinglehor{2}\rsinglehor{3}\end{dsdn}, \nonumber
\end{equation}
giving the arrow:
\begin{aaequation}{equation} \label{eq:aalen2.1}
  m_{1,2,1}: \left(
    \left[\begin{dsdn}{3}\lsinglehor{2}\lsinglehor{3}\rsinglehor{1}\end{dsdn}\right];
    \begin{sdn}{3}\stranddown{1}{2}\singlehor{3}\end{sdn},
    \begin{sdn}{3}\stranddown{2}{3}\singlehor{1}\end{sdn}~;~
    \begin{sdn}{3}\strandup{1}{3}\end{sdn}\right) \to
  \left[\begin{dsdn}{3}\lsinglehor{1}\lsinglehor{2}\rsinglehor{3}\end{dsdn}\right].
\end{aaequation}
Note in the first $H$ move, we shift only the lower part of the strand to the
left, since the lower interval comes first in the ordering.

\paragraph{Middle idempotent on the left, upper interval first in ordering.}
The only sequence covering the size 2 interval is:
\begin{equation}
  \begin{dsdn}{3}\lsinglehor{2}\lsinglehor{3}\rsinglehor{1}\end{dsdn}\overto{A_2}
  \begin{dsdn}{3}\lsinglehor{2}\lsinglehor{3}\rstrandup{1}{3}\end{dsdn}\overto{H}
  \begin{dsdn}{3}\lsinglehor{2}\lstrandup{1}{3}\rsinglehor{3}\end{dsdn}\overto{A_1}
  \begin{dsdn}{3}\lsinglehor{1}\lsinglehor{2}\rsinglehor{3}\end{dsdn}, \nonumber
\end{equation}
giving the arrow:
\begin{aaequation}{equation} \label{eq:aalen2.2}
  m_{1,1,1}: \left(
    \left[\begin{dsdn}{3}\lsinglehor{2}\lsinglehor{3}\rsinglehor{1}\end{dsdn}\right];
    \begin{sdn}{3}\singlehor{2}\stranddown{1}{3}\end{sdn}~;~
    \begin{sdn}{3}\strandup{1}{3}\end{sdn}\right) \to
  \left[\begin{dsdn}{3}\lsinglehor{1}\lsinglehor{2}\rsinglehor{3}\end{dsdn}\right].
\end{aaequation}
Here the upper interval comes first, so we shift the entire strand to the left
in the first $H$ move.

\paragraph{Middle idempotent on the right, upper interval first in ordering.}
In this case there are two possible sequences covering the size 2 interval. The
first one is,
\begin{equation}
  \begin{dsdn}{3}\lsinglehor{3}\rsinglehor{1}\rsinglehor{2}\end{dsdn}\overto{A_2}
  \begin{dsdn}{3}\lsinglehor{3}\rsinglehor{2}\rstrandup{1}{3}\end{dsdn}\overto{H}
  \begin{dsdn}{3}\lstrandup{1}{3}\rsinglehor{2}\rsinglehor{3}\end{dsdn}\overto{A_1}
  \begin{dsdn}{3}\lsinglehor{1}\rsinglehor{2}\rsinglehor{3}\end{dsdn}, \nonumber
\end{equation}
giving the arrow:
\begin{aaequation}{equation} \label{eq:aalen2.3}
  m_{1,1,1}: \left(
    \left[\begin{dsdn}{3}\lsinglehor{3}\rsinglehor{1}\rsinglehor{2}\end{dsdn}\right];
    \begin{sdn}{3}\stranddown{1}{3}\end{sdn}~;~
    \begin{sdn}{3}\strandup{1}{3}\singlehor{2}\end{sdn}\right) \to
  \left[\begin{dsdn}{3}\lsinglehor{1}\rsinglehor{2}\rsinglehor{3}\end{dsdn}\right].
\end{aaequation}

The second one is:
\begin{equation}
  \begin{dsdn}{3}\lsinglehor{3}\rsinglehor{1}\rsinglehor{2}\end{dsdn}\overto{A_2}
  \begin{dsdn}{3}\lsinglehor{3}\rstrandup{1}{2}\rstrandup{2}{3}\end{dsdn}\overto{H}
  \begin{dsdn}{3}\lstrandup{2}{3}\rsinglehor{3}\rstrandup{1}{2}\end{dsdn}\overto{A_1}
  \begin{dsdn}{3}\lsinglehor{2}\rsinglehor{3}\rstrandup{1}{2}\end{dsdn}\overto{H}
  \begin{dsdn}{3}\lstrandup{1}{2}\rsinglehor{2}\rsinglehor{3}\end{dsdn}\overto{A_1}
  \begin{dsdn}{3}\lsinglehor{1}\rsinglehor{2}\rsinglehor{3}\end{dsdn}, \nonumber
\end{equation}
giving the arrow:
\begin{aaequation}{equation} \label{eq:aalen2.4}
  m_{1,2,1}: \left(
    \left[\begin{dsdn}{3}\lsinglehor{3}\rsinglehor{1}\rsinglehor{2}\end{dsdn}\right];
    \begin{sdn}{3}\stranddown{2}{3}\end{sdn},
    \begin{sdn}{3}\stranddown{1}{2}\end{sdn}~;~
    \begin{sdn}{3}\strandup{1}{2}\strandup{2}{3}\end{sdn}\right) \to
  \left[\begin{dsdn}{3}\lsinglehor{1}\rsinglehor{2}\rsinglehor{3}\end{dsdn}\right].
\end{aaequation}

\paragraph{Middle idempotent on the right, lower interval first in ordering.}
The only sequence covering the size 2 interval is:
\begin{equation}
  \begin{dsdn}{3}\lsinglehor{3}\rsinglehor{1}\rsinglehor{2}\end{dsdn}\overto{A_2}
  \begin{dsdn}{3}\lsinglehor{3}\rsinglehor{1}\rstrandup{2}{3}\end{dsdn}\overto{H}
  \begin{dsdn}{3}\lstrandup{2}{3}\rsinglehor{1}\rsinglehor{3}\end{dsdn}\overto{A_2}
  \begin{dsdn}{3}\lstrandup{2}{3}\rsinglehor{3}\rstrandup{1}{2}\end{dsdn}\overto{H}
  \begin{dsdn}{3}\lstrandup{1}{3}\rsinglehor{2}\rsinglehor{3}\end{dsdn}\overto{A_1}
  \begin{dsdn}{3}\lsinglehor{1}\rsinglehor{2}\rsinglehor{3}\end{dsdn}, \nonumber
\end{equation}
giving the arrow:
\begin{aaequation}{equation} \label{eq:aalen2.5}
  m_{1,1,2}: \left(
    \left[\begin{dsdn}{3}\lsinglehor{3}\rsinglehor{1}\rsinglehor{2}\end{dsdn}\right];
    \begin{sdn}{3}\stranddown{1}{3}\end{sdn}~;~
    \begin{sdn}{3}\strandup{2}{3}\singlehor{1}\end{sdn},
    \begin{sdn}{3}\strandup{1}{2}\singlehor{3}\end{sdn}\right) \to
  \left[\begin{dsdn}{3}\lsinglehor{1}\rsinglehor{2}\rsinglehor{3}\end{dsdn}\right].
\end{aaequation}

As examples, we show the computation of type $\DA$ arrows in
$\hataa(\iz)\boxtimes\hatdd(\iz)$ that cover a size 2 interval. While the
results in the remainder of this section will not be used directly in what
follows, it serves as a model for the calculations of similar arrows in
$\hatda(\tau)$ for an arcslide $\tau$.

To compute the type $\DA$ arrows, we combine the previous results with what is
known about type $\DD$ arrows in $\hatdd(\iz)$. On the size 2 interval, the
possibilities are given below. On each line, $\delta^1: \mathbf{x}\to
(a,a')\otimes\mathbf{y}$ represents the arrow $\delta^1: \mathbf{x}\to a\otimes
a'\otimes\mathbf{y}$, where $a\in\sla(\slz)$ and $a'\in\sla(-\slz)$.

\begin{ddequation}{alignat}{1}
  \delta^1: \left(\begin{dsdn}{3}\lsinglehor{1}\rsinglehor{2}\rsinglehor{3}\end{dsdn}\right) &\to
  \begin{dsdn}{3}\lstrandup{1}{2}\rstranddown{1}{2}\rsinglehor{3}\end{dsdn} \otimes
  \begin{dsdn}{3}\lsinglehor{2}\rsinglehor{1}\rsinglehor{3}\end{dsdn}
  \label{eq:ddlen2.1} \\
  \delta^1: \left(\begin{dsdn}{3}\lsinglehor{1}\lsinglehor{3}\rsinglehor{2}\end{dsdn}\right) &\to
  \begin{dsdn}{3}\lstrandup{1}{2}\rstranddown{1}{2}\lsinglehor{3}\end{dsdn} \otimes
  \begin{dsdn}{3}\lsinglehor{2}\rsinglehor{1}\lsinglehor{3}\end{dsdn}
  \label{eq:ddlen2.2} \\
  \delta^1: \left(\begin{dsdn}{3}\lsinglehor{2}\rsinglehor{1}\rsinglehor{3}\end{dsdn}\right) &\to
  \begin{dsdn}{3}\lstrandup{2}{3}\rstranddown{2}{3}\rsinglehor{1}\end{dsdn} \otimes
  \begin{dsdn}{3}\lsinglehor{3}\rsinglehor{1}\rsinglehor{2}\end{dsdn}
  \label{eq:ddlen2.3} \\
  \delta^1: \left(\begin{dsdn}{3}\lsinglehor{2}\lsinglehor{1}\rsinglehor{3}\end{dsdn}\right) &\to
  \begin{dsdn}{3}\lstrandup{2}{3}\rstranddown{2}{3}\lsinglehor{1}\end{dsdn} \otimes
  \begin{dsdn}{3}\lsinglehor{3}\lsinglehor{1}\rsinglehor{2}\end{dsdn}
  \label{eq:ddlen2.4} \\
  \delta^1: \left(\begin{dsdn}{3}\lsinglehor{1}\rsinglehor{2}\rsinglehor{3}\end{dsdn}\right) &\to
  \begin{dsdn}{3}\lstrandup{1}{3}\rstranddown{1}{3}\rsinglehor{2}\end{dsdn} \otimes
  \begin{dsdn}{3}\lsinglehor{3}\rsinglehor{1}\rsinglehor{2}\end{dsdn}
  \label{eq:ddlen2.5} \\
  \delta^1: \left(\begin{dsdn}{3}\lsinglehor{1}\lsinglehor{2}\rsinglehor{3}\end{dsdn}\right) &\to
  \begin{dsdn}{3}\lstrandup{1}{3}\lsinglehor{2}\rstranddown{1}{3}\end{dsdn} \otimes
  \begin{dsdn}{3}\lsinglehor{2}\lsinglehor{3}\rsinglehor{1}\end{dsdn}
  \label{eq:ddlen2.6}
\end{ddequation}

It is now a matter of combining these following the rules of the box tensor
product. In the figures below, for both type $\DD$ and type $\AA$ bimodules, we
will show the first algebra action on the left and the second algebra action on
the right. This is purely for ease of visualization, and does not indicate which
side the algebras act on. Indeed, both actions on the type $\DD$ bimodule are on
the left, and both actions on the type $\AA$ bimodule are on the
right. Nevertheless, we will often talk about ``left action'' or ``left
idempotents'' to match how the figures are drawn. Moreover, we will put the
$\DD$ arrows on the left, and $\AA$ arrows on the right, since we are tensoring
the second action in $\hatdd(\mathbb{I}_\slz)$ with the first action in
$\hataa(\mathbb{I}_\slz)$.

Each type $\DA$ arrow comes from a single type $\AA$ arrow and zero or more type
$\DD$ arrows. The left outputs (in $\sla(\slz)$) of the type $\DD$ arrows are
multiplied together to give the overall type $D$ output, while the right outputs
(in $\sla(-\slz)$) are given as the left inputs to the type $\AA$ arrow. The
overall type $A$ inputs in $\sla(\slz)$ are given as the right inputs to the
type $\AA$ arrow.

The right idempotent of the $\DD$ generator must agree with the left idempotent
of the $\AA$ generator. The left idempotent of the $\DD$ generator and the right
idempotent of the $\AA$ generator then combine to form the idempotent of the
resulting $\DA$ generator.

We now look at each of the four cases:

\paragraph{Middle idempotent on the left, lower interval first in ordering.}
The combination is shown as Figure \ref{fig:daform1}, using (\ref{eq:ddlen2.1}),
(\ref{eq:ddlen2.3}), and (\ref{eq:aalen2.1}). The resulting arrow is:

\begin{daequation}{equation}
  \label{eq:dalen2.1}
  \delta_2^1: \left(\begin{dsdn}{3}\lsinglehor{1}\rsinglehor{1}\end{dsdn}~,~
    \begin{sdn}{3}\strandup{1}{3}\end{sdn}\right) \to
  \begin{sdn}{3}\strandup{1}{3}\end{sdn} ~\otimes~
  \begin{dsdn}{3}\lsinglehor{3}\rsinglehor{3}\end{dsdn}.
\end{daequation}

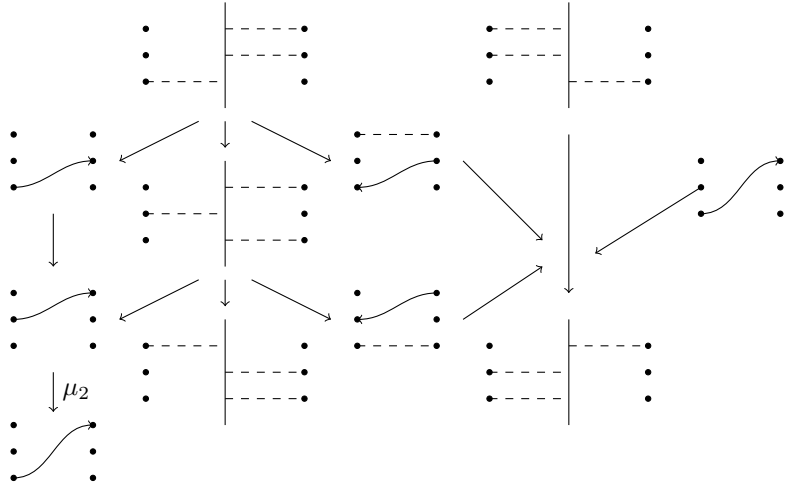
\begin{figure}[h!tb] \centering
  \begin{tikzpicture} [x=10pt,y=10pt]
    \begin{subssdn}{0}{0}{3}\strandup{1}{3}\end{subssdn}
    \begin{subssdn}{0}{5}{3}\strandup{2}{3}\end{subssdn}
    \begin{subssdn}{0}{11}{3}\strandup{1}{2}\end{subssdn}
    \begin{subsdn}{5}{3}{3}\lsinglehor{3}\rsinglehor{1}\rsinglehor{2}\end{subsdn}
    \begin{subsdn}{5}{9}{3}\lsinglehor{2}\rsinglehor{1}\rsinglehor{3}\end{subsdn}
    \begin{subsdn}{5}{15}{3}\lsinglehor{1}\rsinglehor{2}\rsinglehor{3}\end{subsdn}
    \begin{subssdn}{13}{5}{3}\singlehor{1}\stranddown{2}{3}\end{subssdn}
    \begin{subssdn}{13}{11}{3}\singlehor{3}\stranddown{1}{2}\end{subssdn}
    \begin{subsdn}{18}{3}{3}\lsinglehor{1}\lsinglehor{2}\rsinglehor{3}\end{subsdn}
    \begin{subsdn}{18}{15}{3}\lsinglehor{2}\lsinglehor{3}\rsinglehor{1}\end{subsdn}
    \begin{subssdn}{26}{10}{3}\strandup{1}{3}\end{subssdn}
    \draw [->] (1.5,11) to (1.5,9);
    \draw [->] (1.5,5) to node [right] {$\mu_2$} (1.5,3.5);
    \draw [->] (8,14.5) to (8,13.5);
    \draw [->] (8,8.5) to (8,7.5);
    \draw [->] (7,14.5) to (4,13);
    \draw [->] (7,8.5) to (4,7);
    \draw [->] (9,14.5) to (12,13);
    \draw [->] (9,8.5) to (12,7);
    \draw [->] (21,14) to (21,8);
    \draw [->] (17,13) to (20,10);
    \draw [->] (17,7) to (20,9);
    \draw [->] (26,12) to (22,9.5);
  \end{tikzpicture}
  \caption{Formation of type $\DA$ operation, case 1.}
  \label{fig:daform1}
\end{figure}

\paragraph{Middle idempotent on the left, upper interval first in ordering.} The
combination is shown in Figure \ref{fig:daform2}, using (\ref{eq:ddlen2.5}) and
(\ref{eq:aalen2.2}). The resulting arrow is the same as in
(\ref{eq:dalen2.1}). So in this case the order of the two intervals already does
not matter at the $\DA$ level.

\begin{figure}[h!tb] \centering
  \begin{tikzpicture} [x=10pt,y=10pt]
    \begin{subssdn}{0}{4}{3}\strandup{1}{3}\end{subssdn}
    \begin{subsdn}{5}{0}{3}\lsinglehor{3}\rsinglehor{1}\rsinglehor{2}\end{subsdn}
    \begin{subsdn}{5}{8}{3}\lsinglehor{1}\rsinglehor{2}\rsinglehor{3}\end{subsdn}
    \begin{subssdn}{13}{4}{3}\singlehor{2}\stranddown{1}{3}\end{subssdn}
    \begin{subsdn}{18}{0}{3}\lsinglehor{1}\lsinglehor{2}\rsinglehor{3}\end{subsdn}
    \begin{subsdn}{18}{8}{3}\lsinglehor{2}\lsinglehor{3}\rsinglehor{1}\end{subsdn}
    \begin{subssdn}{26}{4}{3}\strandup{1}{3}\end{subssdn}
    \draw [->] (8,7) to (8,5);
    \draw [->] (21,7) to (21,5);
    \draw [->] (7,7) to (4,6);
    \draw [->] (9,7) to (12,6);
    \draw [->] (17,6) to (20,5);
    \draw [->] (25,6) to (22,5);
  \end{tikzpicture}
  \caption{Formation of type $\DA$ operation, case 2.}
  \label{fig:daform2}
\end{figure}
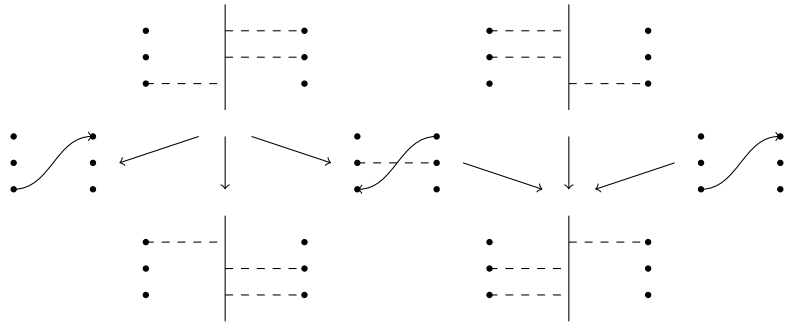

\paragraph{Middle idempotent on the right, upper interval first in ordering.}
The combination is shown in Figure \ref{fig:daform3}, using (\ref{eq:ddlen2.6})
and (\ref{eq:aalen2.3}). The resulting arrow is:
\begin{daequation}{equation}
  \label{eq:dalen2.2}
  \delta_2^1: \left(\begin{dsdn}{3}\lsinglehor{1}\lsinglehor{2}\rsinglehor{1}\rsinglehor{2}\end{dsdn}~,~
    \begin{sdn}{3}\strandup{1}{3}\singlehor{2}\end{sdn}\right) \to
  \begin{sdn}{3}\strandup{1}{3}\singlehor{2}\end{sdn} ~\otimes~
  \begin{dsdn}{3}\lsinglehor{2}\lsinglehor{3}\rsinglehor{2}\rsinglehor{3}\end{dsdn}.
\end{daequation}

\begin{figure}[h!tb] \centering
  \begin{tikzpicture} [x=10pt,y=10pt]
    \begin{subssdn}{0}{4}{3}\singlehor{2}\strandup{1}{3}\end{subssdn}
    \begin{subsdn}{5}{0}{3}\lsinglehor{2}\lsinglehor{3}\rsinglehor{1}\end{subsdn}
    \begin{subsdn}{5}{8}{3}\lsinglehor{1}\lsinglehor{2}\rsinglehor{3}\end{subsdn}
    \begin{subssdn}{13}{4}{3}\stranddown{1}{3}\end{subssdn}
    \begin{subsdn}{18}{0}{3}\lsinglehor{1}\rsinglehor{2}\rsinglehor{3}\end{subsdn}
    \begin{subsdn}{18}{8}{3}\lsinglehor{3}\rsinglehor{1}\rsinglehor{2}\end{subsdn}
    \begin{subssdn}{26}{4}{3}\singlehor{2}\strandup{1}{3}\end{subssdn}
    \draw [->] (8,7) to (8,5);
    \draw [->] (21,7) to (21,5);
    \draw [->] (7,7) to (4,6);
    \draw [->] (9,7) to (12,6);
    \draw [->] (17,6) to (20,5);
    \draw [->] (25,6) to (22,5);
  \end{tikzpicture}
  \caption{Formation of type $\DA$ operation, case 3.}
  \label{fig:daform3}
\end{figure}

Another combination, using (\ref{eq:ddlen2.4}), (\ref{eq:ddlen2.2}), and
(\ref{eq:aalen2.4}), gives the arrow:
\begin{daequation}{equation}
  \label{eq:dalen2.2'}
  \delta_2^1: \left(\begin{dsdn}{3}\lsinglehor{1}\lsinglehor{2}\rsinglehor{1}\rsinglehor{2}\end{dsdn}~,~
    \begin{sdn}{3}\strandup{1}{2}\strandup{2}{3}\end{sdn}\right) \to
  \begin{sdn}{3}\strandup{1}{2}\strandup{2}{3}\end{sdn} ~\otimes~
  \begin{dsdn}{3}\lsinglehor{2}\lsinglehor{3}\rsinglehor{2}\rsinglehor{3}\end{dsdn}.
\end{daequation}

\paragraph{Middle idempotent on the right, lower interval first in ordering.}
The combination is shown in Figure \ref{fig:daform4}, using (\ref{eq:ddlen2.6})
and (\ref{eq:aalen2.5}). The resulting arrow is:

\begin{daequation}{equation}
  \label{eq:dalen2.3}
  \delta_3^1: \left(\begin{dsdn}{3}\lsinglehor{1}\lsinglehor{2}\rsinglehor{1}\rsinglehor{2}\end{dsdn}~,~
    \begin{sdn}{3}\strandup{2}{3}\singlehor{1}\end{sdn}~,~
    \begin{sdn}{3}\strandup{1}{2}\singlehor{3}\end{sdn}\right) \to
  \begin{sdn}{3}\strandup{1}{3}\singlehor{2}\end{sdn} ~\otimes~
  \begin{dsdn}{3}\lsinglehor{2}\lsinglehor{3}\rsinglehor{2}\rsinglehor{3}\end{dsdn}.
\end{daequation}

\begin{figure}[h!tb] \centering
  \begin{tikzpicture} [x=10pt,y=10pt]
    \begin{subssdn}{0}{5}{3}\singlehor{2}\strandup{1}{3}\end{subssdn}
    \begin{subsdn}{5}{0}{3}\lsinglehor{2}\lsinglehor{3}\rsinglehor{1}\end{subsdn}
    \begin{subsdn}{5}{10}{3}\lsinglehor{1}\lsinglehor{2}\rsinglehor{3}\end{subsdn}
    \begin{subssdn}{13}{5}{3}\stranddown{1}{3}\end{subssdn}
    \begin{subsdn}{18}{0}{3}\lsinglehor{1}\rsinglehor{2}\rsinglehor{3}\end{subsdn}
    \begin{subsdn}{18}{10}{3}\lsinglehor{3}\rsinglehor{1}\rsinglehor{2}\end{subsdn}
    \begin{subssdn}{26}{4}{3}\singlehor{3}\strandup{1}{2}\end{subssdn}
    \begin{subssdn}{26}{8}{3}\singlehor{1}\strandup{2}{3}\end{subssdn}
    \draw [->] (8,9) to (8,5);
    \draw [->] (21,9) to (21,5);
    \draw [->] (7,9) to (4,7);
    \draw [->] (9,9) to (12,7);
    \draw [->] (17,7) to (20,5);
    \draw [->] (25,9) to (22,6);
    \draw [->] (25,6) to (22,5);
  \end{tikzpicture}
  \caption{Formation of type $\DA$ operation, case 4.}
  \label{fig:daform4}
\end{figure}
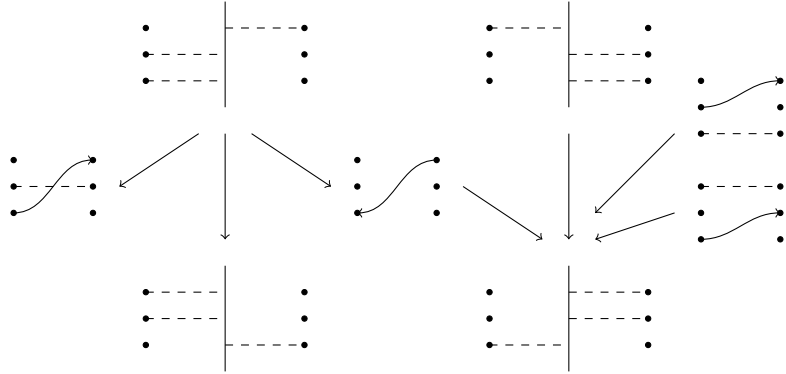

This arrow shows that the model $\hataa(\iz)\boxtimes\hatdd(\iz)$ of
$\widehatit{CFDA}(\iz)$ is not exactly the same, but only homotopy equivalent to
$\mathbb{I}$.

\subsubsection{Pieces of arcslide}
We now compute some simple arrows in $\hatda(\tau)$ for an arcslide
$\tau$. Since the method used here is similar to that in the previous section,
we will show only the results.

First, we consider the case where $b_1$ is directly above $c_1$, and computing
the arrows in $\hatda(\tau)$ corresponding to the region of the Heegaard diagram
around $b_1$. The Heegaard diagram around $b_1$ is:
\[
\begin{tikzpicture}[x=14pt,y=14pt]
  \draw (0,-0.2) to (0,2.2); \draw (4,-0.2) to (4,2.2);
  \draw (0,0) to (4,0); \draw (0,2) to (4,2);
  \draw[rounded corners=5] (0,1) to (2,1) to (2,0);
  \filldraw[fill=white] (2,0) circle (0.15);
  \draw (2,0) circle (0.5);
  \filldraw[fill=white] (1,1) circle (0.15);
  \filldraw[fill=white] (3,2) circle (0.15);
\end{tikzpicture}.\]

The possible type $\DD$ arrows are:
\begin{ddequation}{alignat}{1}
  \delta^1: \left(\begin{dsdmr2}\lsinglehor{1}\rsinglehor{3}\end{dsdmr2}\right) &\to
  \begin{dsdmr2}\lstrandup{1}{3}\rstranddown{1}{3}\end{dsdmr2} \otimes
  \begin{dsdmr2}\lsinglehor{3}\rsinglehor{1}\end{dsdmr2}
  \label{eq:dd1.1} \\
  \delta^1: \left(\begin{dsdmr2}\lsinglehor{1}\lsinglehor{2}\rsinglehor{3}\end{dsdmr2}\right) &\to
  \begin{dsdmr2}\lsinglehor{2}\lstrandup{1}{3}\rstranddown{1}{3}\end{dsdmr2} \otimes
  \begin{dsdmr2}\lsinglehor{2}\lsinglehor{3}\rsinglehor{1}\end{dsdmr2}
  \label{eq:dd1.2} \\
  \delta^1: \left(\begin{dsdmr2}\lsinglehor{1}\lsinglehor{3}\rsinglehor{1}\end{dsdmr2}\right) &\to
  \begin{dsdmr2}\lsinglehor{3}\lstrandup{1}{2}\rsinglehor{1}\end{dsdmr2} \otimes
  \begin{dsdmr2}\lsinglehor{2}\lsinglehor{3}\rsinglehor{1}\end{dsdmr2}
  \label{eq:dd1.3} \\
  \delta^1: \left(\begin{dsdmr2}\lsinglehor{1}\rsinglehor{1}\rsinglehor{3}\end{dsdmr2}\right) &\to
  \begin{dsdmr2}\lstrandup{1}{2}\rsinglehor{1}\rsinglehor{3}\end{dsdmr2} \otimes
  \begin{dsdmr2}\lsinglehor{2}\rsinglehor{1}\rsinglehor{3}\end{dsdmr2}
  \label{eq:dd1.4} \\
  \delta^1: \left(\begin{dsdmr2}\lsinglehor{1}\lsinglehor{2}\rsinglehor{3}\end{dsdmr2}\right) &\to
  \begin{dsdmr2}\lsinglehor{1}\lstrandup{2}{3}\rstranddown{1}{3}\end{dsdmr2} \otimes
  \begin{dsdmr2}\lsinglehor{1}\lsinglehor{3}\rsinglehor{1}\end{dsdmr2}
  \label{eq:dd1.5}
\end{ddequation}

This comes directly from \cite{LOT10c}. The only potentially tricky part is
figuring out the possible locations of idempotents. For example, in the arrow
$\delta^1: \mathbf{x}\to (a(\sigma),1)\otimes\mathbf{y}$ (third and fourth
arrows above; $(\sigma)$ is the chord $c_1\to b_1$), the left idempotent of
$\mathbf{y}$ must be occupied at the $B$ pair and unoccupied at the $C$
pair. Since generators of $\hatdd(\tau)$ either have complementary idempotents
or idempotents that are both occupied at $C$, the right idempotent of
$\mathbf{y}$ must be occupied at $C$ (so $\mathbf{y}$ is of type $X$). From the
idempotent of $\mathbf{y}$, we can deduce that of $\mathbf{x}$, and see that
$\mathbf{x}$ is of type $Y$. Similar arguments are used to list possible
idempotents in the other cases.

Computing the type $\DA$ arrows in this case is relatively straightforward, as
we are combining with the arrow (\ref{eq:aalen1}) on a size 1 interval. The
results are as follows, where (\ref{eq:da1.1}) through (\ref{eq:da1.5}) follow
respectively from (\ref{eq:dd1.1}) through (\ref{eq:dd1.5}).

\begin{daequation}{alignat}{1}
  \delta_2^1: \left(\begin{dsdmr2}\lsinglehor{1}\rsinglehor{1}\end{dsdmr2} ~,~
    \begin{sdbig2}\strandup{1}{3}\end{sdbig2} \right) &\to
  \begin{sdn}{3}\strandup{1}{3}\end{sdn} \otimes
  \begin{dsdmr2}\lsinglehor{3}\rsinglehor{3}\end{dsdmr2}
  \label{eq:da1.1} \\
  \delta_2^1: \left(\begin{dsdmr2}\lsinglehor{1}\lsinglehor{2}\rsinglehor{1}\end{dsdmr2} ~,~
    \begin{sdbig2}\strandup{1}{3}\end{sdbig2} \right) &\to
  \begin{sdn}{3}\singlehor{2}\strandup{1}{3}\end{sdn} \otimes
  \begin{dsdmr2}\lsinglehor{2}\lsinglehor{3}\rsinglehor{3}\end{dsdmr2}
  \label{eq:da1.2} \\
  \delta_1^1: \left(\begin{dsdmr2}\lsinglehor{1}\lsinglehor{3}\rsinglehor{3}\end{dsdmr2}\right) &\to
  \begin{sdn}{3}\singlehor{3}\strandup{1}{2}\end{sdn} \otimes
  \begin{dsdmr2}\lsinglehor{2}\lsinglehor{3}\rsinglehor{3}\end{dsdmr2}
  \label{eq:da1.3} \\
  \delta_1^1: \left(\begin{dsdmr2}\lsinglehor{1}\end{dsdmr2}\right) &\to
  \begin{sdn}{3}\strandup{1}{2}\end{sdn} \otimes
  \begin{dsdmr2}\lsinglehor{2}\end{dsdmr2}
  \label{eq:da1.4} \\
  \delta_2^1: \left(\begin{dsdmr2}\lsinglehor{1}\lsinglehor{2}\rsinglehor{1}\end{dsdmr2} ~,~
    \begin{sdbig2}\strandup{1}{3}\end{sdbig2} \right) &\to
  \begin{sdn}{3}\singlehor{1}\strandup{2}{3}\end{sdn} \otimes
  \begin{dsdmr2}\lsinglehor{1}\lsinglehor{3}\rsinglehor{3}\end{dsdmr2}
  \label{eq:da1.5}
\end{daequation}

Now we consider other side of the same case, computing arrows in $\hatda(\tau)$
corresponding to the region around $b_1'$. Since $b_1$ is directly above $c_1$,
we have $b_1'$ directly below $c_2$, and the Heegaard diagram around $b_1'$ is:
\[ \begin{tikzpicture}[x=14pt,y=14pt]
  \draw (0,-0.2) to (0,2.2); \draw (4,-0.2) to (4,2.2);
  \draw (0,0) to (4,0); \draw (0,2) to (4,2);
  \draw[rounded corners=5] (2,2) to (2,1) to (4,1);
  \filldraw[fill=white] (2,2) circle (0.15);
  \filldraw[fill=white] (1,0) circle (0.15);
\end{tikzpicture}. \]

The type $\DD$ operations are:
\begin{ddequation}{alignat}{1}
  \delta^1: \left(\begin{dsdml2}\lsinglehor{1}\rsinglehor{3}\end{dsdml2}\right) &\to
  \begin{dsdml2}\lstrandup{1}{3}\rstranddown{1}{3}\end{dsdml2} \otimes
  \begin{dsdml2}\lsinglehor{3}\rsinglehor{1}\end{dsdml2}
  \label{eq:dd2.1} \\
  \delta^1: \left(\begin{dsdml2}\lsinglehor{1}\rsinglehor{2}\rsinglehor{3}\end{dsdml2}\right) &\to
  \begin{dsdml2}\lstrandup{1}{3}\rstranddown{1}{3}\rsinglehor{2}\end{dsdml2} \otimes
  \begin{dsdml2}\lsinglehor{3}\rsinglehor{1}\rsinglehor{2}\end{dsdml2}
  \label{eq:dd2.2} \\
  \delta^1: \left(\begin{dsdml2}\lsinglehor{1}\lsinglehor{3}\rsinglehor{3}\end{dsdml2}\right) &\to
  \begin{dsdml2}\lsinglehor{1}\lsinglehor{3}\rstranddown{2}{3}\end{dsdml2} \otimes
  \begin{dsdml2}\lsinglehor{1}\lsinglehor{3}\rsinglehor{2}\end{dsdml2}
  \label{eq:dd2.3} \\
  \delta^1: \left(\begin{dsdml2}\lsinglehor{3}\rsinglehor{1}\rsinglehor{3}\end{dsdml2}\right) &\to
  \begin{dsdml2}\lsinglehor{3}\rsinglehor{1}\rstranddown{2}{3}\end{dsdml2} \otimes
  \begin{dsdml2}\lsinglehor{3}\rsinglehor{1}\rsinglehor{2}\end{dsdml2}
  \label{eq:dd2.4} \\
  \delta^1: \left(\begin{dsdml2}\lsinglehor{1}\rsinglehor{2}\rsinglehor{3}\end{dsdml2}\right) &\to
  \begin{dsdml2}\lstrandup{1}{3}\rsinglehor{3}\rstranddown{1}{2}\end{dsdml2} \otimes
  \begin{dsdml2}\lsinglehor{3}\rsinglehor{1}\rsinglehor{3}\end{dsdml2}
  \label{eq:dd2.5}
\end{ddequation}

This time we will need to combine with type $\AA$ arrows on a size 2 interval,
emulating the method in Section \ref{subsec:typeaaid}. The results are:

\begin{daequation}{alignat}{1}
  (\mathrm{upper~first})\;
  \delta_2^1: \left(\begin{dsdml2}\lsinglehor{1}\rsinglehor{1}\rsinglehor{2}\end{dsdml2} ~,~
    \begin{sdn}{3}\singlehor{2}\strandup{1}{3}\end{sdn}\right) &\to
  \begin{sdbig2}\strandup{1}{3}\end{sdbig2} \otimes
  \begin{dsdml2}\lsinglehor{3}\rsinglehor{2}\rsinglehor{3}\end{dsdml2}
  \label{eq:da2.1} \\
  (\mathrm{lower~first})\;
  \delta_3^1: \left(\begin{dsdml2}\lsinglehor{1}\rsinglehor{1}\rsinglehor{2}\end{dsdml2} ~,~
    \begin{sdn}{3}\singlehor{1}\strandup{2}{3}\end{sdn} ~,~
    \begin{sdn}{3}\singlehor{3}\strandup{1}{2}\end{sdn}\right) &\to
  \begin{sdbig2}\strandup{1}{3}\end{sdbig2} \otimes
  \begin{dsdml2}\lsinglehor{3}\rsinglehor{2}\rsinglehor{3}\end{dsdml2}
  \label{eq:da2.2} \\
  \delta_2^1: \left(\begin{dsdml2}\lsinglehor{1}\rsinglehor{1}\end{dsdml2} ~,~
    \begin{sdn}{3}\strandup{1}{3}\end{sdn}\right) &\to
  \begin{sdbig2}\strandup{1}{3}\end{sdbig2} \otimes
  \begin{dsdml2}\lsinglehor{3}\rsinglehor{3}\end{dsdml2}
  \label{eq:da2.3} \\
  \delta_2^1: \left(\begin{dsdml2}\lsinglehor{1}\lsinglehor{3}\rsinglehor{1}\rsinglehor{2}\end{dsdml2} ~,~
    \begin{sdn}{3}\singlehor{1}\strandup{2}{3}\end{sdn}\right) &\to
  \begin{sdbig2}\singlehor{1}\singlehor{3}\end{sdbig2} \otimes
  \begin{dsdml2}\lsinglehor{1}\lsinglehor{3}\rsinglehor{1}\rsinglehor{3}\end{dsdml2} \label{eq:da2.4} \\
  \delta_2^1: \left(\begin{dsdml2}\lsinglehor{3}\rsinglehor{2}\end{dsdml2} ~,~
    \begin{sdn}{3}\strandup{2}{3}\end{sdn}\right) &\to
  \begin{sdbig2}\singlehor{3}\end{sdbig2} \otimes
  \begin{dsdml2}\lsinglehor{3}\rsinglehor{3}\end{dsdml2}
  \label{eq:da2.5} \\
  \delta_2^1: \left(\begin{dsdml2}\lsinglehor{1}\rsinglehor{1}\end{dsdml2} ~,~
    \begin{sdn}{3}\strandup{1}{2}\end{sdn}\right) &\to
  \begin{sdbig2}\strandup{1}{3}\end{sdbig2} \otimes
  \begin{dsdml2}\lsinglehor{3}\rsinglehor{2}\end{dsdml2}
  \label{eq:da2.6}
\end{daequation}

The first arrow follows from (\ref{eq:dd2.1}) and (\ref{eq:aalen2.3}) only if
the upper interval comes first in the ordering $<_{\slz'}$ for the right pointed
matched circle. The second arrow follows from (\ref{eq:dd2.1}) and
(\ref{eq:aalen2.5}) only if the lower interval comes first in the ordering. The
third arrow does not depend on ordering. However, it is formed in different ways
for the two orderings: if upper interval comes first, it follows from
(\ref{eq:dd2.2}) and (\ref{eq:aalen2.2}), otherwise it follows (\ref{eq:dd2.5}),
(\ref{eq:dd2.4}), and (\ref{eq:aalen2.1}). The last three arrows are independent
of ordering. They follow from (\ref{eq:aalen1}) and respectively
(\ref{eq:dd2.3}) through (\ref{eq:dd2.5}).

The cases where $b_1$ is directly below $c_1$ (and therefore $b_1'$ is directly
above $c_2$) are very similar. The Heegaard diagram around $b_1$ is:
\[
\begin{tikzpicture}[x=14pt,y=14pt]
  \draw (0,-0.2) to (0,2.2); \draw (4,-0.2) to (4,2.2);
  \draw (0,0) to (4,0); \draw (0,2) to (4,2);
  \draw[rounded corners=5] (0,1) to (2,1) to (2,2);
  \filldraw[fill=white] (2,2) circle (0.15);
  \draw (2,2) circle (0.5);
  \filldraw[fill=white] (1,1) circle (0.15);
  \filldraw[fill=white] (3,0) circle (0.15);
\end{tikzpicture}.\] The first two $\DA$ arrows are the same as (\ref{eq:da1.1})
and (\ref{eq:da1.2}), and the last three $\DA$ arrows are modified appropriately
from (\ref{eq:da1.3}) through (\ref{eq:da1.5}).

The Heegaard diagram around $b_1'$ is:
\[ \begin{tikzpicture}[x=14pt,y=14pt]
  \draw (0,-0.2) to (0,2.2); \draw (4,-0.2) to (4,2.2);
  \draw (0,0) to (4,0); \draw (0,2) to (4,2);
  \draw[rounded corners=5] (2,0) to (2,1) to (4,1);
  \filldraw[fill=white] (2,0) circle (0.15);
  \filldraw[fill=white] (1,2) circle (0.15);
\end{tikzpicture}. \] The first three $\DA$ arrows are the same as
(\ref{eq:da2.1}) through (\ref{eq:da2.3}), and the last three $\DA$ arrows are
modified appropriately from (\ref{eq:da2.4}) through (\ref{eq:da2.6}).

\subsubsection{Type $\AA$ on a size 2 interval, paired case}
We now consider the case of a size 2 interval, where the top and bottom points
are paired with each other. Here the lower interval immediately precedes the
upper interval in the ordering $<_\slz$. There are no arrows starting at
generators where the middle idempotent is on the same side as the idempotent
containing the top and bottom points. Starting at generators where the middle
idempotent is to the right, there is a sequence:
\begin{equation}
  \begin{dsdn}{3}\ldoublehor{1}{3}\rsinglehor{2}\end{dsdn}\overto{A_2}
  \begin{dsdn}{3}\ldoublehor{1}{3}\rstrandup{2}{3}\end{dsdn}\overto{H}
  \begin{dsdn}{3}\lstrandup{2}{3}\rdoublehor{1}{3}\end{dsdn}\overto{A_2}
  \begin{dsdn}{3}\lstrandup{2}{3}\rstrandup{1}{2}\end{dsdn}\overto{H}
  \begin{dsdn}{3}\lstrandup{1}{3}\rsinglehor{2}\end{dsdn}\overto{A_1}
  \begin{dsdn}{3}\ldoublehor{1}{3}\rsinglehor{2}\end{dsdn}, \nonumber
\end{equation}
giving the arrow:
\begin{aaequation}{equation} \label{eq:aalen2.1'}
  m_{1,1,2}: \left(
    \left[\begin{dsdn}{3}\ldoublehor{1}{3}\rsinglehor{2}\end{dsdn}\right];
    \begin{sdn}{3}\stranddown{1}{3}\end{sdn}~;~
    \begin{sdn}{3}\strandup{2}{3}\end{sdn},
    \begin{sdn}{3}\strandup{1}{2}\end{sdn}\right) \to
  \left[\begin{dsdn}{3}\ldoublehor{1}{3}\rsinglehor{2}\end{dsdn}\right].
\end{aaequation}

Starting at generators where the middle idempotent is to the left, there is a
sequence:
\begin{equation}
  \begin{dsdn}{3}\lsinglehor{2}\rdoublehor{1}{3}\end{dsdn}\overto{A_2}
  \begin{dsdn}{3}\lsinglehor{2}\rstrandup{1}{3}\end{dsdn}\overto{H}
  \begin{dsdn}{3}\lstrandup{1}{2}\rstrandup{2}{3}\end{dsdn}\overto{A_1}
  \begin{dsdn}{3}\ldoublehor{1}{3}\rstrandup{2}{3}\end{dsdn}\overto{H}
  \begin{dsdn}{3}\lstrandup{2}{3}\rdoublehor{1}{3}\end{dsdn}\overto{A_1}
  \begin{dsdn}{3}\lsinglehor{2}\rdoublehor{1}{3}\end{dsdn}, \nonumber
\end{equation}
giving the arrow:
\begin{aaequation}{equation} \label{eq:aalen2.2'}
  m_{1,2,1}: \left(
    \left[\begin{dsdn}{3}\lsinglehor{2}\rdoublehor{1}{3}\end{dsdn}\right];
    \begin{sdn}{3}\stranddown{1}{2}\end{sdn},
    \begin{sdn}{3}\stranddown{2}{3}\end{sdn}~;~
    \begin{sdn}{3}\strandup{1}{3}\end{sdn}\right) \to
  \left[\begin{dsdn}{3}\lsinglehor{2}\rdoublehor{1}{3}\end{dsdn}\right].
\end{aaequation}

Furthermore, there are several infinite series of arrows formed by repeating the
moves used above. We list the two arrows that will be used later in the paper:
\begin{eqnarray}
  \begin{dsdn}{3}\ldoublehor{1}{3}\rsinglehor{2}\end{dsdn}&\overto{A_2}&
  \begin{dsdn}{3}\ldoublehor{1}{3}\rstrandup{2}{3}\end{dsdn}\overto{H}
  \begin{dsdn}{3}\lstrandup{2}{3}\rdoublehor{1}{3}\end{dsdn}\overto{A_2}
  \begin{dsdn}{3}\lstrandup{2}{3}\rstrandup{1}{3}\end{dsdn}\overto{H}
  \begin{dsdn}{3}\lstrandup{1}{3}\rstrandup{2}{3}\end{dsdn} \nonumber \\ &\overto{A_1}&
  \begin{dsdn}{3}\ldoublehor{1}{3}\rstrandup{2}{3}\end{dsdn}\overto{H}
  \begin{dsdn}{3}\lstrandup{2}{3}\rdoublehor{1}{3}\end{dsdn}\overto{A_1}
  \begin{dsdn}{3}\lsinglehor{2}\rdoublehor{1}{3}\end{dsdn}, \nonumber
\end{eqnarray}
giving the arrow:
\begin{aaequation}{equation} \label{eq:aalen2.3'}
  m_{1,2,2}: \left(
    \left[\begin{dsdn}{3}\ldoublehor{1}{3}\rsinglehor{2}\end{dsdn}\right];
    \begin{sdn}{3}\stranddown{1}{3}\end{sdn},
    \begin{sdn}{3}\stranddown{2}{3}\end{sdn}~;~
    \begin{sdn}{3}\strandup{2}{3}\end{sdn},
    \begin{sdn}{3}\strandup{1}{3}\end{sdn}\right) \to
  \left[\begin{dsdn}{3}\lsinglehor{2}\rdoublehor{1}{3}\end{dsdn}\right].
\end{aaequation}
and
\begin{eqnarray}
  \begin{dsdn}{3}\lsinglehor{2}\rdoublehor{1}{3}\end{dsdn}&\overto{A_2}&
  \begin{dsdn}{3}\lsinglehor{2}\rstrandup{1}{3}\end{dsdn}\overto{H}
  \begin{dsdn}{3}\lstrandup{1}{2}\rstrandup{2}{3}\end{dsdn}\overto{A_1}
  \begin{dsdn}{3}\ldoublehor{1}{3}\rstrandup{2}{3}\end{dsdn}\overto{H}
  \begin{dsdn}{3}\lstrandup{2}{3}\rdoublehor{1}{3}\end{dsdn} \nonumber \\ &\overto{A_2}&
  \begin{dsdn}{3}\lstrandup{2}{3}\rstrandup{1}{2}\end{dsdn}\overto{H}
  \begin{dsdn}{3}\lstrandup{1}{3}\rsinglehor{2}\end{dsdn}\overto{A_1}
  \begin{dsdn}{3}\ldoublehor{1}{3}\rsinglehor{2}\end{dsdn}, \nonumber
\end{eqnarray}
giving the arrow:
\begin{aaequation}{equation} \label{eq:aalen2.4'}
  m_{1,2,2}: \left(
    \left[\begin{dsdn}{3}\lsinglehor{2}\rdoublehor{1}{3}\end{dsdn}\right];
    \begin{sdn}{3}\stranddown{1}{2}\end{sdn},
    \begin{sdn}{3}\stranddown{1}{3}\end{sdn}~;~
    \begin{sdn}{3}\strandup{1}{3}\end{sdn},
    \begin{sdn}{3}\strandup{1}{2}\end{sdn}\right) \to
  \left[\begin{dsdn}{3}\ldoublehor{1}{3}\rsinglehor{2}\end{dsdn}\right].
\end{aaequation}

Note that in the derivation of the first arrow, we used Case 3 of the homotopy
map $H$ in the multiplicity greater than one case. From this we see that the
type $\AA$ bimodule can have infinitely many arrows. However, in our examples,
only a finite number of them will be used when constructing the action on type
$\DA$ invariants.

\subsubsection{Short underslide}
Using results from the previous section, we compute type $\DA$ arrows for the
short underslide. These are underslides where $b_1$ is the only point between
$c_1$ and $c_2$. Hence $b_1$ and $b_1'$ are located in the same region of the
Heegaard diagram, which is the only region of interest. The diagram for the case
where $b_1$ is directly above $c_1$ is:
\[
\begin{tikzpicture}[x=14pt,y=14pt]
  \draw (0,-0.2) to (0,2.2); \draw (4,-0.2) to (4,2.2);
  \draw (0,0) to (4,0); \draw (0,2) to (4,2);
  \draw[rounded corners=7] (0,1) to (2,1) to (2,0);
  \draw[rounded corners=7] (2,2) to (2,1) to (4,1);
  \filldraw[fill=white] (2,0) circle (0.15);
  \filldraw[fill=white] (2,2) circle (0.15);
  \draw (2,0) circle (0.5);
  \filldraw[fill=white] (1,1) circle (0.15);
\end{tikzpicture}~.\]

The possible type $\DD$ arrows are:
\begin{ddequation}{alignat}{1}
  \delta^1: \left(\begin{dsd3}\ldoublehor{1}{3}\rdoublehor{1}{3}\end{dsd3}\right) &\to
  \begin{dsd3}\lstrandup{1}{2}\rdoublehor{1}{3}\end{dsd3} \otimes
  \begin{dsd3}\lsinglehor{2}\rdoublehor{1}{3}\end{dsd3}
  \label{eq:dd4.1} \\
  \delta^1: \left(\begin{dsd3}\ldoublehor{1}{3}\rdoublehor{1}{3}\end{dsd3}\right) &\to
  \begin{dsd3}\ldoublehor{1}{3}\rstranddown{2}{3}\end{dsd3} \otimes
  \begin{dsd3}\ldoublehor{1}{3}\rsinglehor{2}\end{dsd3}
  \label{eq:dd4.2} \\
  \delta^1: \left(\begin{dsd3}\ldoublehor{1}{3}\rsinglehor{2}\end{dsd3}\right) &\to
  \begin{dsd3}\lstrandup{1}{3}\rstranddown{1}{2}\end{dsd3} \otimes
  \begin{dsd3}\ldoublehor{1}{3}\rdoublehor{1}{3}\end{dsd3}
  \label{eq:dd4.3} \\
  \delta^1: \left(\begin{dsd3}\lsinglehor{2}\rdoublehor{1}{3}\end{dsd3}\right) &\to
  \begin{dsd3}\lstrandup{2}{3}\rstranddown{1}{3}\end{dsd3} \otimes
  \begin{dsd3}\ldoublehor{1}{3}\rdoublehor{1}{3}\end{dsd3}
  \label{eq:dd4.4}
\end{ddequation}

These give rise to type $\DA$ arrows
\begin{daequation}{alignat}{1}
  \delta_1^1: \left(\begin{dsd3}\ldoublehor{1}{3}\rsinglehor{2}\end{dsd3}\right) &\to
  \begin{sdn}{3}\strandup{1}{2}\end{sdn} \otimes
  \begin{dsd3}\lsinglehor{2}\rsinglehor{2}\end{dsd3}
  \label{eq:da4.1} \\
  \delta_2^1: \left(\begin{dsd3}\ldoublehor{1}{3}\rsinglehor{2}\end{dsd3} ~,~
    \begin{sdn}{3}\strandup{2}{3}\end{sdn}\right) &\to
  \begin{sdn}{3}\doublehor{1}{3}\end{sdn} \otimes
  \begin{dsd3}\ldoublehor{1}{3}\rdoublehor{1}{3}\end{dsd3}
  \label{eq:da4.2} \\
  \delta_2^1: \left(\begin{dsd3}\ldoublehor{1}{3}\rdoublehor{1}{3}\end{dsd3} ~,~
    \begin{sdn}{3}\strandup{1}{2}\end{sdn}\right) &\to
  \begin{sdn}{3}\strandup{1}{3}\end{sdn} \otimes
  \begin{dsd3}\ldoublehor{1}{3}\rsinglehor{2}\end{dsd3}
  \label{eq:da4.3} \\
  \delta_3^1: \left(\begin{dsd3}\lsinglehor{2}\rsinglehor{2}\end{dsd3} ~,~
    \begin{sdn}{3}\strandup{2}{3}\end{sdn} ~,~
    \begin{sdn}{3}\strandup{1}{2}\end{sdn}\right) &\to
  \begin{sdn}{3}\strandup{2}{3}\end{sdn} \otimes
  \begin{dsd3}\ldoublehor{1}{3}\rsinglehor{2}\end{dsd3}
  \label{eq:da4.4} \\
  \delta_2^1: \left(\begin{dsd3}\ldoublehor{1}{3}\rdoublehor{1}{3}\end{dsd3} ~,~
    \begin{sdn}{3}\strandup{1}{3}\end{sdn}\right) &\to
  \begin{sdn}{3}\strandup{1}{3}\end{sdn} \otimes
  \begin{dsd3}\ldoublehor{1}{3}\rdoublehor{1}{3}\end{dsd3}
  \label{eq:da4.5} \\
  \delta_3^1: \left(\begin{dsd3}\lsinglehor{2}\rsinglehor{2}\end{dsd3} ~,~
    \begin{sdn}{3}\strandup{2}{3}\end{sdn} ~,~
    \begin{sdn}{3}\strandup{1}{3}\end{sdn}\right) &\to
  \begin{sdn}{3}\strandup{2}{3}\end{sdn} \otimes
  \begin{dsd3}\ldoublehor{1}{3}\rdoublehor{1}{3}\end{dsd3}
  \label{eq:da4.6}
\end{daequation}

Here arrows (\ref{eq:da4.1}) through (\ref{eq:da4.3}) follow respectively from
(\ref{eq:dd4.1}) through (\ref{eq:dd4.3}). Arrow (\ref{eq:da4.4}) follows from
(\ref{eq:dd4.4}) and (\ref{eq:aalen2.1'}). Arrow (\ref{eq:da4.5}) follows from
(\ref{eq:dd4.3}), (\ref{eq:dd4.2}), and (\ref{eq:aalen2.2'}). Arrow
(\ref{eq:da4.6}) follows from (\ref{eq:dd4.4}), (\ref{eq:dd4.2}), and
(\ref{eq:aalen2.3'}).

The diagram for the case where $b_1$ is directly below $c_1$ is:
\[
\begin{tikzpicture}[x=14pt,y=14pt]
  \draw (0,-0.2) to (0,2.2); \draw (4,-0.2) to (4,2.2);
  \draw (0,0) to (4,0); \draw (0,2) to (4,2);
  \draw[rounded corners=7] (0,1) to (2,1) to (2,2);
  \draw[rounded corners=7] (2,0) to (2,1) to (4,1);
  \filldraw[fill=white] (2,0) circle (0.15);
  \filldraw[fill=white] (2,2) circle (0.15);
  \draw (2,0) circle (0.5);
  \filldraw[fill=white] (1,1) circle (0.15);
\end{tikzpicture}.\]

The possible type $\DD$ arrows are:
\begin{ddequation}{alignat}{1}
  \delta^1: \left(\begin{dsd3}\lsinglehor{2}\rdoublehor{1}{3}\end{dsd3}\right) &\to
  \begin{dsd3}\lstrandup{2}{3}\rdoublehor{1}{3}\end{dsd3} \otimes
  \begin{dsd3}\ldoublehor{1}{3}\rdoublehor{1}{3}\end{dsd3}
  \label{eq:dd5.1} \\
  \delta^1: \left(\begin{dsd3}\ldoublehor{1}{3}\rsinglehor{2}\end{dsd3}\right) &\to
  \begin{dsd3}\ldoublehor{1}{3}\rstranddown{1}{2}\end{dsd3} \otimes
  \begin{dsd3}\ldoublehor{1}{3}\rdoublehor{1}{3}\end{dsd3}
  \label{eq:dd5.2} \\
  \delta^1: \left(\begin{dsd3}\ldoublehor{1}{3}\rdoublehor{1}{3}\end{dsd3}\right) &\to
  \begin{dsd3}\lstrandup{1}{3}\rstranddown{2}{3}\end{dsd3} \otimes
  \begin{dsd3}\ldoublehor{1}{3}\rsinglehor{2}\end{dsd3}
  \label{eq:dd5.3} \\
  \delta^1: \left(\begin{dsd3}\ldoublehor{1}{3}\rdoublehor{1}{3}\end{dsd3}\right) &\to
  \begin{dsd3}\lstrandup{1}{2}\rstranddown{1}{3}\end{dsd3} \otimes
  \begin{dsd3}\lsinglehor{2}\rdoublehor{1}{3}\end{dsd3}
  \label{eq:dd5.4}
\end{ddequation}

These give rise to type $\DA$ arrows
\begin{daequation}{alignat}{1}
  \delta_1^1: \left(\begin{dsd3}\lsinglehor{2}\rsinglehor{2}\end{dsd3}\right) &\to
  \begin{sdn}{3}\strandup{2}{3}\end{sdn} \otimes
  \begin{dsd3}\ldoublehor{1}{3}\rsinglehor{2}\end{dsd3}
  \label{eq:da5.1} \\
  \delta_2^1: \left(\begin{dsd3}\ldoublehor{1}{3}\rdoublehor{1}{3}\end{dsd3} ~,~
    \begin{sdn}{3}\strandup{1}{2}\end{sdn}\right) &\to
  \begin{sdn}{3}\doublehor{1}{3}\end{sdn} \otimes
  \begin{dsd3}\ldoublehor{1}{3}\rsinglehor{2}\end{dsd3}
  \label{eq:da5.2} \\
  \delta_2^1: \left(\begin{dsd3}\ldoublehor{1}{3}\rsinglehor{2}\end{dsd3} ~,~
    \begin{sdn}{3}\strandup{2}{3}\end{sdn}\right) &\to
  \begin{sdn}{3}\strandup{1}{3}\end{sdn} \otimes
  \begin{dsd3}\ldoublehor{1}{3}\rdoublehor{1}{3}\end{dsd3}
  \label{eq:da5.3} \\
  \delta_3^1: \left(\begin{dsd3}\ldoublehor{1}{3}\rsinglehor{2}\end{dsd3} ~,~
    \begin{sdn}{3}\strandup{2}{3}\end{sdn} ~,~
    \begin{sdn}{3}\strandup{1}{2}\end{sdn}\right) &\to
  \begin{sdn}{3}\strandup{1}{2}\end{sdn} \otimes
  \begin{dsd3}\lsinglehor{2}\rsinglehor{2}\end{dsd3}
  \label{eq:da5.4} \\
  \delta_2^1: \left(\begin{dsd3}\ldoublehor{1}{3}\rdoublehor{1}{3}\end{dsd3} ~,~
    \begin{sdn}{3}\strandup{1}{3}\end{sdn}\right) &\to
  \begin{sdn}{3}\strandup{1}{3}\end{sdn} \otimes
  \begin{dsd3}\ldoublehor{1}{3}\rdoublehor{1}{3}\end{dsd3}
  \label{eq:da5.5} \\
  \delta_3^1: \left(\begin{dsd3}\ldoublehor{1}{3}\rdoublehor{1}{3}\end{dsd3} ~,~
    \begin{sdn}{3}\strandup{1}{3}\end{sdn} ~,~
    \begin{sdn}{3}\strandup{1}{2}\end{sdn}\right) &\to
  \begin{sdn}{3}\strandup{1}{2}\end{sdn} \otimes
  \begin{dsd3}\lsinglehor{2}\rsinglehor{2}\end{dsd3}
  \label{eq:da5.6}
\end{daequation}

Here arrows (\ref{eq:da5.1}) through (\ref{eq:da5.3}) follow respectively from
(\ref{eq:dd5.1}) through (\ref{eq:dd5.3}). Arrow (\ref{eq:da5.4}) follows from
(\ref{eq:dd5.4}) and (\ref{eq:aalen2.1'}). Arrow (\ref{eq:da5.5}) follows from
(\ref{eq:dd5.2}), (\ref{eq:dd5.3}), and (\ref{eq:aalen2.2'}). Arrow
(\ref{eq:da5.6}) follows from (\ref{eq:dd5.2}), (\ref{eq:dd5.4}), and
(\ref{eq:aalen2.4'}).

\subsubsection{Type $\AA$ on two separated intervals with pairing}
To consider more local situations for the arcslide, we will need type $\AA$
arrows on two separated intervals, such that either the two inner positions or
the two outer positions are paired. These two cases are very similar, so we will
only write out the first case here.

In this case, the upper interval immediately precedes the lower interval in the
ordering $<_\mathcal{Z}$. If the middle idempotent (consisting of the two paired
inner points) is occupied on the left, then it is not possible to multiply both
intervals to the right as the first step. So the only possible sequence of
$[a_{i,1},a_{i,2}]$ is:

\begin{equation}
  \begin{dsd2sep}\ldoublehor{2}{4}\lsinglehor{5}\rsinglehor{1}\end{dsd2sep}\overto{A_2}
  \begin{dsd2sep}\ldoublehor{2}{4}\lsinglehor{5}\rstrandup{1}{2}\end{dsd2sep}\overto{H}
  \begin{dsd2sep}\lsinglehor{5}\lstrandup{1}{2}\rdoublehor{2}{4}\end{dsd2sep}\overto{A_2}
  \begin{dsd2sep}\lsinglehor{5}\lstrandup{1}{2}\rstrandup{4}{5}\end{dsd2sep}\overto{H}
  \begin{dsd2sep}\lstrandup{1}{2}\lstrandup{4}{5}\rsinglehor{5}\end{dsd2sep}\overto{A_1}
  \begin{dsd2sep}\lsinglehor{1}\ldoublehor{2}{4}\rsinglehor{5}\end{dsd2sep} \nonumber
\end{equation}
giving the arrow:
\begin{aaequation}{equation} \label{eq:aasep1}
  m_{1,1,2}: \left(
    \left[\begin{dsd2sep}\ldoublehor{2}{4}\lsinglehor{5}\rsinglehor{1}\end{dsd2sep}\right] ~;~
    \begin{sd2sep}\stranddown{1}{2}\stranddown{4}{5}\end{sd2sep} ~;~
    \begin{sd2sep}\strandup{1}{2}\end{sd2sep} ~,~ \begin{sd2sep}\strandup{4}{5}\end{sd2sep}\right) \to
  \left[\begin{dsd2sep}\ldoublehor{2}{4}\lsinglehor{1}\rsinglehor{5}\end{dsd2sep}\right].
\end{aaequation}

In these diagrams, the two middle positions are paired, and there can be an
arbitrary number of points between them in the full pointed matched circle.
Since no arrow in $\hatdd(\iz)$ gives off an algebra element with two separate
strands, this cannot be used to form a type $\DA$ arrow for the identity.

If the middle idempotent is occupied on the right, it is possible to multiply
both intervals to the right as the first step, but not possible to multiply only
the lower interval. So the only sequence is:
\begin{equation}
  \begin{dsd2sep}\lsinglehor{5}\rsinglehor{1}\rdoublehor{2}{4}\end{dsd2sep}\overto{A_2}
  \begin{dsd2sep}\lsinglehor{5}\rstrandup{1}{2}\rstrandup{4}{5}\end{dsd2sep}\overto{H}
  \begin{dsd2sep}\lstrandup{4}{5}\rstrandup{1}{2}\rsinglehor{5}\end{dsd2sep}\overto{A_1}
  \begin{dsd2sep}\ldoublehor{2}{4}\rsinglehor{5}\rstrandup{1}{2}\end{dsd2sep}\overto{H}
  \begin{dsd2sep}\lstrandup{1}{2}\rdoublehor{2}{4}\rsinglehor{5}\end{dsd2sep}\overto{A_1}
  \begin{dsd2sep}\lsinglehor{1}\rdoublehor{2}{4}\rsinglehor{5}\end{dsd2sep} \nonumber
\end{equation}
giving the arrow:
\begin{aaequation}{equation} \label{eq:aasep2}
  m_{1,2,1}: \left(
    \left[\begin{dsd2sep}\lsinglehor{5}\rsinglehor{1}\rdoublehor{2}{4}\end{dsd2sep}\right] ~;~
    \begin{sd2sep}\stranddown{4}{5}\end{sd2sep} ~,~
    \begin{sd2sep}\stranddown{1}{2}\end{sd2sep} ~;~
    \begin{sd2sep}\strandup{1}{2}\strandup{4}{5}\end{sd2sep}\right) \to
  \left[\begin{dsd2sep}\lsinglehor{1}\rsinglehor{5}\rdoublehor{2}{4}\end{dsd2sep}\right].
\end{aaequation}

This leads to the following type $\DA$ arrow for identity:
\begin{daequation}{equation} \label{eq:dasep1}
  \delta_2^1: \left(\begin{dsd2sep}\lsinglehor{1}\ldoublehor{2}{4}\rsinglehor{1}\rdoublehor{2}{4}\end{dsd2sep} ~,~
    \begin{sd2sep}\strandup{1}{2}\strandup{4}{5}\end{sd2sep}\right) \to
  \begin{sd2sep}\strandup{1}{2}\strandup{4}{5}\end{sd2sep} \otimes
  \begin{dsd2sep}\lsinglehor{5}\ldoublehor{2}{4}\rsinglehor{5}\rdoublehor{2}{4}\end{dsd2sep}. \nonumber
\end{daequation}

\subsubsection{More pieces of arcslide}

We now compute some arrows whose corresponding domains touch both $c_1$ and
$c_2$. We focus on the overslide cases, with the underslide cases being
similar. First, if $b_1$ is directly above $c_1$, the local Heegaard diagram is
as follows. We focus on arrows whose domain is restricted inside this diagram.
\begin{equation}
  \begin{tikzpicture}[x=14pt,y=14pt]
    \draw (0,-0.2) to (0,2.2); \draw (4,-0.2) to (4,2.2);
    \draw (0,0) to (4,0); \draw (0,2) to (4,2);
    \draw[rounded corners=5] (2,2) to (2,1) to (4,1);
    \draw (0,4) to (4,4); \draw (0,3.8) to (0,5.2); \draw (4,3.8) to (4,4.2);
    \draw[rounded corners=5] (0,5) to (2,5) to (2,4);
    \filldraw[fill=white] (2,2) circle (0.15);
    \filldraw[fill=white] (2,4) circle (0.15);
    \draw (2,4) circle (0.5);
    \filldraw[fill=white] (1,5) circle (0.15);
  \end{tikzpicture}. \nonumber
\end{equation}

Here the two horizontal lines where the 1-handle is attached contain the
$\alpha$-arcs for the $C$ pair. Immediately above and below these two lines are
the points $b_1$ on the left and $b_1'$ on the right.

For clarity, we list all type $\DD$ and $\DA$ arrows in this region, even though
some may already have been covered in previous cases. The type $\DD$ arrows are:
\begin{ddequation}{alignat}{1}
  \delta^1: \left(\begin{dsdlrise}\lsinglehor{1}\ldoublehor{3}{5}\rdoublehor{3}{5}\end{dsdlrise}\right) &\to
  \begin{dsdlrise}\lsinglehor{1}\lstrandup{5}{6}\rdoublehor{3}{5}\end{dsdlrise} \otimes
  \begin{dsdlrise}\lsinglehor{1}\lsinglehor{6}\rdoublehor{3}{5}\end{dsdlrise}
  \label{eq:dd6.1} \\
  \delta^1: \left(\begin{dsdlrise}\ldoublehor{3}{5}\rsinglehor{1}\rdoublehor{3}{5}\end{dsdlrise}\right) &\to
  \begin{dsdlrise}\lstrandup{5}{6}\rsinglehor{1}\rdoublehor{3}{5}\end{dsdlrise} \otimes
  \begin{dsdlrise}\lsinglehor{6}\rsinglehor{1}\rdoublehor{3}{5}\end{dsdlrise}
  \label{eq:dd6.2} \\
  \delta^1: \left(\begin{dsdlrise}\lsinglehor{1}\rsinglehor{2}\rdoublehor{3}{5}\end{dsdlrise}\right) &\to
  \begin{dsdlrise}\lstrandup{1}{3}\rstranddown{1}{2}\rdoublehor{3}{5}\end{dsdlrise} \otimes
  \begin{dsdlrise}\ldoublehor{3}{5}\rsinglehor{1}\rdoublehor{3}{5}\end{dsdlrise}
  \label{eq:dd6.3} \\
  \delta^1: \left(\begin{dsdlrise}\lsinglehor{1}\ldoublehor{3}{5}\rsinglehor{2}\end{dsdlrise}\right) &\to
  \begin{dsdlrise}\lstrandup{1}{3}\lstrandup{5}{6}\rstranddown{1}{2}\end{dsdlrise} \otimes
  \begin{dsdlrise}\ldoublehor{3}{5}\lsinglehor{6}\rsinglehor{1}\end{dsdlrise}
  \label{eq:dd6.4}
\end{ddequation}

These give rise to the following type $\DA$ arrows. Here (\ref{eq:da6.1})
through (\ref{eq:da6.4}) follow respectively from (\ref{eq:dd6.1}) through
(\ref{eq:dd6.4}).

\begin{daequation}{alignat}{1}
  \delta_1^1: \left(\begin{dsdlrise}\lsinglehor{1}\ldoublehor{3}{5}\rsinglehor{1}\rsinglehor{2}\end{dsdlrise}\right) &\to
  \begin{sd6m24}\singlehor{1}\strandup{5}{6}\end{sd6m24} \otimes
  \begin{dsdlrise}\lsinglehor{1}\lsinglehor{6}\rsinglehor{1}\rsinglehor{2}\end{dsdlrise} \label{eq:da6.1} \\
  \delta_1^1: \left(\begin{dsdlrise}\ldoublehor{3}{5}\rsinglehor{2}\end{dsdlrise}\right) &\to
  \begin{sd6m24}\strandup{5}{6}\end{sd6m24} \otimes
  \begin{dsdlrise}\lsinglehor{6}\rsinglehor{2}\end{dsdlrise}
  \label{eq:da6.2} \\
  \delta_2^1: \left(\begin{dsdlrise}\lsinglehor{1}\rsinglehor{1}\end{dsdlrise} ~,~
    \begin{sd6m46}\strandup{1}{2}\end{sd6m46}\right) &\to
  \begin{sd6m24}\strandup{1}{3}\end{sd6m24} \otimes
  \begin{dsdlrise}\ldoublehor{3}{5}\rsinglehor{2}\end{dsdlrise}
  \label{eq:da6.3} \\
  \delta_2^1: \left(\begin{dsdlrise}\lsinglehor{1}\ldoublehor{3}{5}\rsinglehor{1}\rdoublehor{3}{5}\end{dsdlrise} ~,~
    \begin{sd6m46}\strandup{1}{2}\doublehor{3}{5}\end{sd6m46}\right) &\to
  \begin{sd6m24}\strandup{1}{3}\strandup{5}{6}\end{sd6m24} \otimes
  \begin{dsdlrise}\ldoublehor{3}{5}\lsinglehor{6}\rsinglehor{2}\rdoublehor{3}{5}\end{dsdlrise} \label{eq:da6.4}
\end{daequation}

The case where $b_1$ is directly below $c_1$ is again more complicated. The
local Heegaard diagram is
\begin{equation}
  \begin{tikzpicture}[x=14pt,y=14pt]
    \draw (0,-0.2) to (0,2.2); \draw (4,-0.2) to (4,2.2);
    \draw (0,0) to (4,0); \draw (0,2) to (4,2);
    \draw[rounded corners=5] (2,2) to (2,1) to (0,1);
    \draw (0,4) to (4,4); \draw (4,3.8) to (4,5.2); \draw (0,3.8) to (0,4.2);
    \draw[rounded corners=5] (4,5) to (2,5) to (2,4);
    \filldraw[fill=white] (2,2) circle (0.15);
    \filldraw[fill=white] (2,4) circle (0.15);
    \draw (2,4) circle (0.5);
    \filldraw[fill=white] (3,5) circle (0.15);
  \end{tikzpicture}. \nonumber
\end{equation}

The type $\DD$ arrows are:
\begin{ddequation}{alignat}{1}
  \delta^1: \left(\begin{dsdrrise}\lsinglehor{1}\ldoublehor{3}{5}\rsinglehor{6}\end{dsdrrise}\right) &\to
  \begin{dsdrrise}\lsinglehor{1}\ldoublehor{3}{5}\rstranddown{5}{6}\end{dsdrrise} \otimes
  \begin{dsdrrise}\lsinglehor{1}\ldoublehor{3}{5}\rdoublehor{3}{5}\end{dsdrrise}
  \label{eq:dd7.1} \\
  \delta^1: \left(\begin{dsdrrise}\ldoublehor{3}{5}\rsinglehor{1}\rsinglehor{6}\end{dsdrrise}\right) &\to
  \begin{dsdrrise}\ldoublehor{3}{5}\rsinglehor{1}\rstranddown{5}{6}\end{dsdrrise} \otimes
  \begin{dsdrrise}\ldoublehor{3}{5}\rsinglehor{1}\rdoublehor{3}{5}\end{dsdrrise}
  \label{eq:dd7.2} \\
  \delta^1: \left(\begin{dsdrrise}\lsinglehor{1}\ldoublehor{3}{5}\rdoublehor{3}{5}\end{dsdrrise}\right) &\to
  \begin{dsdrrise}\ldoublehor{3}{5}\lstrandup{1}{2}\rstranddown{1}{3}\end{dsdrrise} \otimes
  \begin{dsdrrise}\lsinglehor{2}\ldoublehor{3}{5}\rsinglehor{1}\end{dsdrrise}
  \label{eq:dd7.3} \\
  \delta^1: \left(\begin{dsdrrise}\lsinglehor{1}\rdoublehor{3}{5}\rsinglehor{6}\end{dsdrrise}\right) &\to
  \begin{dsdrrise}\lstrandup{1}{2}\rstranddown{1}{3}\rstranddown{5}{6}\end{dsdrrise} \otimes
  \begin{dsdrrise}\lsinglehor{2}\rsinglehor{1}\rdoublehor{3}{5}\end{dsdrrise}
  \label{eq:dd7.4}
\end{ddequation}

The resulting type $\DA$ arrows are:

\begin{daequation}{alignat}{2}
  \delta_2^1: \left(\begin{dsdrrise}\lsinglehor{1}\ldoublehor{3}{5}\rsinglehor{1}\rdoublehor{3}{5}\end{dsdrrise} ~,~
    \begin{sd6m24}\singlehor{1}\strandup{5}{6}\end{sd6m24}\right) &\to
  \begin{sd6m46}\singlehor{1}\doublehor{3}{5}\end{sd6m46} \otimes
  \begin{dsdrrise}\lsinglehor{1}\ldoublehor{3}{5}\rsinglehor{1}\rsinglehor{6}\end{dsdrrise} \label{eq:da7.1} \\
  \delta_2^1: \left(\begin{dsdrrise}\ldoublehor{3}{5}\rdoublehor{3}{5}\end{dsdrrise} ~,~
    \begin{sd6m24}\strandup{5}{6}\end{sd6m24}\right) &\to
  \begin{sd6m46}\doublehor{3}{5}\end{sd6m46} \otimes
  \begin{dsdrrise}\ldoublehor{3}{5}\rsinglehor{6}\end{dsdrrise}
  \label{eq:da7.2} \\
  \delta_2^1: \left(\begin{dsdrrise}\lsinglehor{1}\ldoublehor{3}{5}\rsinglehor{1}\rsinglehor{6}\end{dsdrrise} ~,~
    \begin{sd6m24}\strandup{1}{3}\singlehor{6}\end{sd6m24}\right) &\to
  \begin{sd6m46}\strandup{1}{2}\doublehor{3}{5}\end{sd6m46} \otimes
  \begin{dsdrrise}\lsinglehor{2}\ldoublehor{3}{5}\rdoublehor{3}{5}\rsinglehor{6}\end{dsdrrise} \label{eq:da7.3} \\
  \delta_3^1: \left(\begin{dsdrrise}\lsinglehor{1}\rsinglehor{1}\end{dsdrrise} ~,~
    \begin{sd6m24}\strandup{1}{3}\end{sd6m24} ~,~ \begin{sd6m24}\strandup{5}{6}\end{sd6m24}\right) &\to
  \begin{sd6m46}\strandup{1}{2}\end{sd6m46} \otimes
  \begin{dsdrrise}\lsinglehor{2}\rsinglehor{6}\end{dsdrrise}
  \label{eq:da7.4} \\
  \delta_2^1: \left(\begin{dsdrrise}\lsinglehor{1}\ldoublehor{3}{5}\rsinglehor{1}\rdoublehor{3}{5}\end{dsdrrise} ~,~
    \begin{sd6m24}\strandup{1}{3}\strandup{5}{6}\end{sd6m24}\right) &\to
  \begin{sd6m46}\strandup{1}{2}\doublehor{3}{5}\end{sd6m46} \otimes
  \begin{dsdrrise}\lsinglehor{2}\ldoublehor{3}{5}\rdoublehor{3}{5}\rsinglehor{6}\end{dsdrrise} \label{eq:da7.5}
\end{daequation}

Here arrows (\ref{eq:da7.1}) through (\ref{eq:da7.3}) follow respectively from
(\ref{eq:dd7.1}) through (\ref{eq:dd7.3}). Arrow (\ref{eq:da7.4}) follows from
(\ref{eq:dd7.4}) and (\ref{eq:aasep1}). Arrow (\ref{eq:da7.5}) follows from
(\ref{eq:dd7.1}), (\ref{eq:dd7.3}), and (\ref{eq:aasep2}).

%%% Local Variables:
%%% mode: latex
%%% TeX-master: "dacalc"
%%% End:

\section{Relations on the mapping class groupoid}
\label{sec:relmcg}

In this section, we conclude the proof of Theorem \ref{thm:invmappingclass}. In
Section \ref{sec:invarcslides}, we describe how to enumerate the set of
generators of a box tensor product of $\hatda(\tau_i)$, where $\tau_i$ are
arcslides, and how properties of $\hatdd(\tau_i)$ carry over to properties of
$\hatda(\tau_i)$ and their box tensor products. With all these preparations in
place, we prove Equation (\ref{eq:tocheck}) for the involution relation in
Section \ref{sec:relinvolution}, and for the other relations in Section
\ref{sec:relothers}.

\subsection{Compositions of arcslides}\label{sec:invarcslides}

Given an arcslide $\tau$, the description of the set of generators of
$\hatda(\tau)$ follows from that for $\hatdd(\tau)$ and $\hataa(\iz)$. The
generators are classified by their idempotents on the two sides (type $D$
idempotent on the left and type $A$ idempotent on the right). As before, we use
the canonical identification of pairs of points between the pointed matched
circles on the two sides. There are two types of generators in $\hatda(\tau)$:
\begin{itemize}
\item Type $X$, where the idempotents on the two sides contain the same pairs.
\item Type $Y$, where the idempotents on the two sides differ at exactly one
  pair, with $C$ pair occupied on the left and $B$ pair occupied on the right.
\end{itemize}
Using this, and the definition of box tensor product, we can enumerate the set
of generators of
\[ \hatda(\tau_1)\boxtimes\cdots\boxtimes\hatda(\tau_n) \] for a sequence of
arcslides $\tau_1,\dots,\tau_n$. We now describe the procedure in detail.

First, we combine the identification between pairs of points on the starting and
ending pointed matched circles of a single arcslide, to obtain an identification
of pairs on all pointed matched circles appearing in the sequence. Note that
even if the starting and ending pointed matched circle of a sequence is the
same, the identification of pairs between the two, induced by the sequence of
arcslides, may not be the identity. See the triangle relation for an example.

With this identification of pairs throughout a sequence, we can talk about a
pair of points in the sequence, which are pairs of points, one for each pointed
matched circle in the sequence, that are identified to be the same. We assign a
number from 1 to $d$ to each pair of points in the sequence that served as
either the $B$ pair or the $C$ pair of some arcslide, where $d$ is the total
number of such pairs.

Each generator of the box tensor product
$\hatda(\tau_1)\boxtimes\cdots\boxtimes\hatda(\tau_n)$ is of the form
$\mathbf{x}_1\otimes\cdots\otimes\mathbf{x}_n$, where $\mathbf{x}_i$ is a
generator of $\hatda(\tau_i)$ for each $1\le i\le n$, and the right idempotent
of $\mathbf{x}_i$ agrees with the left idempotent of $\mathbf{x}_{i+1}$ for each
$1\le i<n$. A generator $\mathbf{x}_1\otimes\cdots\otimes\mathbf{x}_n$ is
determined by the set of occupied pairs at the starting and ending pointed
matched circles, and at each pointed matched circles in the middle. It is clear
that each unnumbered pair must be either occupied throughout or unoccupied
throughout. For the numbered pairs, the only possible changes are as follows:
suppose that for a certain arcslide $\tau_i$ in the sequence, the $B$ pair is
numbered $b_i$ and the $C$ pair is numbered $c_i$, then it is possible to have
$c_i$, but not $b_i$, occupied in the left idempotent of $\mathbf{x}_i$, and
$b_i$, but not $c_i$, occupied in the right idempotent, with all other pairs
staying the same. This corresponds to choosing $\mathbf{x}_i$ to have type $Y$.

We can therefore specify a \emph{type} of generators by specifying which of the
numbered pairs are occupied at each pointed matched circle. At each arcslide,
the generator is either type $X$ or type $Y$. In the first case, the occupied
pairs must be the same before and after, and in the second case, the $C$ pair
occurs before and is replaced by the $B$ pair. To choose a specific generator of
a given type, it remains to choose which unnumbered pairs to occupy throughout,
so that the total number of occupied pairs is $g$ (half of the total $2g$
pairs).

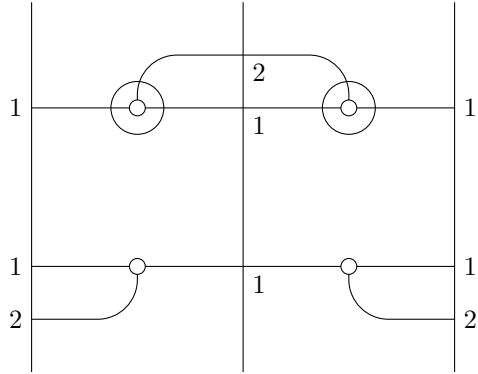
\begin{figure} [h!tb] \centering
  \begin{tikzpicture} [x=20pt,y=20pt]
    \draw (0,1) to (0,8); \draw (4,1) to (4,8); \draw (8,1) to (8,8);
    \draw (0,3) to (8,3); \draw (0,6) to (8,6);
    \draw[rounded corners=15] (0,2) to (2,2) to (2,3);
    \draw[rounded corners=15] (2,6) to (2,7) to (4,7);
    \draw[rounded corners=15] (4,7) to (6,7) to (6,6);
    \draw[rounded corners=15] (6,3) to (6,2) to (8,2);
    \filldraw[fill=white] (2,3) circle (0.15);
    \filldraw[fill=white] (2,6) circle (0.15);
    \filldraw[fill=white] (6,3) circle (0.15);
    \filldraw[fill=white] (6,6) circle (0.15);
    \draw (2,6) circle (0.5); \draw (6,6) circle (0.5);
    \draw (0,3) node[left] {$1$};
    \draw (0,6) node[left] {$1$};
    \draw (0,2) node[left] {$2$};
    \draw (4,7) node[below right] {$2$};
    \draw (4,3) node[below right] {$1$};
    \draw (4,6) node[below right] {$1$};
    \draw (8,3) node[right] {$1$};
    \draw (8,6) node[right] {$1$};
    \draw (8,2) node[right] {$2$};
  \end{tikzpicture}
  \caption{Heegaard diagram for the involution relation.}
  \label{fig:diaginv}
\end{figure}

We now study the involution relation as an example. This is the simplest case,
which nevertheless illustrates most of the reasoning required. One possible
Heegaard diagram for the involution relation is shown in Figure
\ref{fig:diaginv}. There are two numbered pairs ($d = 2$). Pair 1 served as the
$C$ pair and pair 2 served as the $B$ pair for both arcslides. The possible
types of generators are:

\begin{itemize}
\item $_{()}X_{()}X_{()}$
\item $_{(1)}X_{(1)}X_{(1)}$
\item $_{(2)}X_{(2)}X_{(2)}$
\item $_{(12)}X_{(12)}X_{(12)}$
\item $_{(1)}X_{(1)}Y_{(2)}$
\item $_{(1)}Y_{(2)}X_{(2)}$
\end{itemize}

For generators with $X$ at both positions, any combinations of occupying pairs
are possible. For generators with $X$ at first position and $Y$ at second
position, pair 1 and not pair 2 are occupied in the middle, so 1 can be replaced
by 2 at the end. This implies pair 1 and not pair 2 are occupied at the
beginning as well. The same reasoning can be used for type $YX$, and for showing
that type $YY$ is not possible.

Later, we may use $(*)$ to denote an arbitrary subset of the numbered pairs,
that stays the same for a given generator. So for example, we may collect the
first four types above into $_{(*)}X_{(*)}X_{(*)}$.

We now state several general facts about $\hatda(\tau)$ and the box tensor
products of such bimodules. These follow from the corresponding facts about
$\hatdd(\tau)$ in Section \ref{sec:invarcslidesintro}, and the definition of
$\hatda(\tau)$ as $\hataa(\iz)\boxtimes\hatdd(\tau)$.

\begin{remark} [Relation with Heegaard diagram] \label{rem:dacorrespondence}
  Just as in the type $\DD$ case, each generator of $\hatda(\tau)$ corresponds
  to a tuple of points in the standard Heegaard diagram for the arcslide, with
  its left (type $D$) idempotent the set of unoccupied $\alpha$-arcs on the
  left, and its right (type $A$) idempotent the set of occupied $\alpha$-arcs on
  the right. When Heegaard diagrams of arcslides are glued side-by-side along
  their boundaries, the result is a larger Heegaard diagram that now contains
  $\alpha$-circles. Note the boundaries that are glued along are removed from
  the resulting diagram. Each generator
  $\mathbf{x}_1\otimes\cdots\otimes\mathbf{x}_n$ of the box tensor product
  corresponds to a tuple of points, with each $\alpha$ and $\beta$ circles
  containing exactly one point, and each $\alpha$-arc containing at most one
  point.

  As in the type $\DD$ case, each arrow in $\hatda(\tau)$ corresponds to a
  domain away from the basepoint in the Heegaard diagram of the
  arcslide. Likewise, each arrow in the box tensor product
  $\hatda(\tau_1)\boxtimes\cdots\boxtimes\hatda(\tau_n)$ corresponds to a domain
  in the Heegaard diagram obtained by gluing the diagrams for
  $\tau_1,\dots,\tau_n$ in sequence. The multiplicity of the domain on the left
  (resp. right) boundary equals the total multiplicity of the algebra
  coefficients on the left (resp. right) of the arrow. The relation
  $\partial(\partial^\alpha B)=\mathbf{y}-\mathbf{x}$ when domain $B$ represents
  an arrow from $\mathbf{x}$ to $\mathbf{y}$ still holds.
\end{remark}

\begin{remark}[Grading]\label{rem:dagrading}
  The remarks on the grading set of $\hatdd(\tau)$ for an arcslide $\tau$
  extends to a similar statement for $\hatda(\tau)$, and by taking box tensor
  products, extends to $\hatda(\phi,\bm{\tau})$. Given $\phi:F^\circ(\slz_1)\to
  F^\circ(\slz_2)$ and a factorization $\bm{\tau}$ of $\phi$, the bimodule
  $\hatda(\phi,\bm{\tau})$ is graded by a set $S$ having free and transitive
  left-right actions by $G(\slz_1)$ and $G(\slz_2)$. The grading set induces an
  element of $\mathrm{Out}(G(\slz_1),G(\slz_2))$. If $\phi$ begins and ends at
  the same pointed matched circle $\slz$, then it induces an element of the
  outer automorphism group $\mathrm{Out}(G(\slz),G(\slz))$. That element can be
  found from the action of $\phi$ on the homology of the surface. In particular,
  the identity morphism on $F^\circ(\slz)$ induces the identity outer
  isomorphism on $G(\slz)$.
\end{remark}

\begin{remark}[Stabilization]\label{rem:dastabilization}
  Given $\tau:F^\circ(\slz_1)\to F^\circ(\slz_2)$ and its stabilization
  $\mathring{\tau}:F(\mathring{\slz_1})\to F(\mathring{\slz_2})$, the bimodule
  $\hatda(\tau)$ is again an appropriate restriction of
  $\hatda(\mathring{\tau})$. This follows from the corresponding relations
  between $\hataa(\slz)$ and $\hataa(\mathring{\slz})$, for any pointed matched
  circle $\slz$. Taking box tensor products, the stabilization property extends
  to a relation between $\hatda(\phi,\bm{\tau})$ and
  $\hatda(\mathring{\phi},\bm{\mathring{\tau}})$, where $\mathring{\phi}$ is the
  element of the mapping class groupoid that acts as identity on the adjoined
  $\slz^1$, and as $\phi$ elsewhere, and where $\bm{\mathring{\tau}}$ is the
  extension of the factorization $\bm{\tau}$.
\end{remark}

The corresponding duality statements will be left to the end of Section
\ref{sec:relmcg}, where we will have defined all the other bimodule invariants
for surface diffeomorphisms.

\subsection{The involution relation}\label{sec:relinvolution}
In this section we will verify the involution relation. Figure \ref{fig:diaginv}
shows one of the possible cases: overslide in the upward direction. The
computations for overslide in the downward direction, and for underslides over a
pair of points at distance greater than 2 from each other are similar.

Recall that the box tensor product is generated by three types of generators:
\begin{itemize}
\item $_{(*)}X_{(*)}X_{(*)}$
\item $_{(1)}X_{(1)}Y_{(2)}$
\item $_{(1)}Y_{(2)}X_{(2)}$
\end{itemize}
For each type $XY$ generator, there is a corresponding type $YX$ generator that
occupies the same unnumbered pairs. The plan is to cancel out pairs of $XY$ and
$YX$ generators using this correspondence, and show that the resulting bimodule
satisfies the four conditions in Lemma \ref{lem:idrecognize1}.

There are five domains that contribute type $\DA$ arrows of interest. They are
shown in Figures \ref{fig:domainsinv1}.

\begin{figure} [h!tb] \centering
  \begin{tikzpicture} [x=20pt,y=20pt]
    \draw (0,0) to (0,9); \draw (4,0) to (4,9); \draw (8,0) to (8,9);
    \draw (0,3) to (8,3); \draw (0,6) to (8,6);
    \draw (0,1) to (8,1); \draw (0,8) to (8,8);
    \draw[rounded corners=15] (0,2) to (2,2) to (2,3);
    \draw[rounded corners=15] (2,6) to (2,7) to (4,7);
    \draw[rounded corners=15] (4,7) to (6,7) to (6,6);
    \draw[rounded corners=15] (6,3) to (6,2) to (8,2);
    \filldraw[fill=white] (2,3) circle (0.15);
    \filldraw[fill=white] (2,6) circle (0.15);
    \filldraw[fill=white] (6,3) circle (0.15);
    \filldraw[fill=white] (6,6) circle (0.15);
    \draw (2,6) circle (0.5); \draw (6,6) circle (0.5);
    \draw (0,3) node[left] {$1$};
    \draw (0,6) node[left] {$1$};
    \draw (0,2) node[left] {$2$};
    \draw (4,7) node[below right] {$2$};
    \draw (4,3) node[below right] {$1$};
    \draw (4,6) node[below right] {$1$};
    \draw (8,3) node[right] {$1$};
    \draw (8,6) node[right] {$1$};
    \draw (8,2) node[right] {$2$};
    \draw (1,2.5) node {$A$}; \draw (7,2.5) node {$B$};
    \draw (4,6.5) node[left] {$C$};
    \draw (1,7) node {$D$}; \draw (3,2) node {$E$};
  \end{tikzpicture}
  \caption{Domains $A$ through $E$ are connected components of
    $\mathcal{H}\setminus\{\bm{\alpha},\bm{\beta}\}$ containing the respective
    letters. The domains that contribute type $\DA$ arrows of interest are $A$,
    $B$, $C$, $D+C$, and $E+C$. If the upper point in pair 1 (resp. the visible
    point in pair 2) is the topmost (resp. bottommost) point in the pointed
    matched circle, then the domain $D+C$ (resp. $E+C$) does not exist.}
  \label{fig:domainsinv1}
\end{figure}
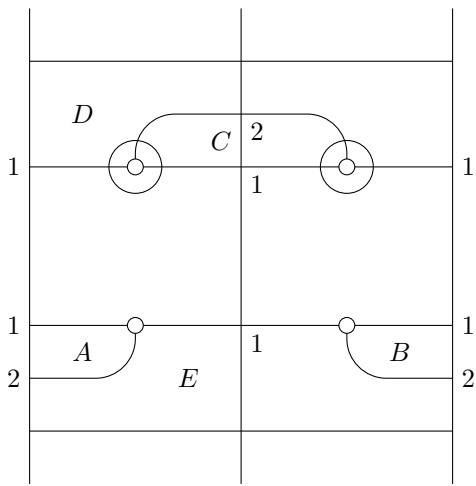

Domain $C$ contributes an arrow with no $A$-side inputs and idempotent $D$-side
output from any $XY$ generator to the corresponding $YX$ generator. This allows
us to cancel all $XY$ and $YX$ generators using the cancellation lemma. We now
focus on the resulting bimodule, with the type $XX$ generators.

This bimodule clearly satisfies (ID-1). Condition (ID-2) can be checked by
explicit grading computations, which uses only the combinatorial features of the
Heegaard diagram. In particular, the fact that the induced map
$\phi\in\mathrm{Out}(G(\mathcal{Z}),G(\mathcal{Z}))$ is identity is equivalent
to the fact that the action of this composition of arcslides on
$H_1(F(\mathcal{Z}))$ is the identity. The stability condition (ID-4) follows
from Remark \ref{rem:dastabilization}.

It remains to verify (ID-3). For this, we need to classify all arrows whose
coefficients have length at most one on both boundaries. Such an arrow either
exists before applying the cancellation lemma, or is produced via a zig-zag.  In
the first case, they correspond to one connected domain between type $XX$
generators. They include trivial horizontal strips in regions away from the
slide, the domain $D+C$, and the domain $E+C$. In the second case, the zig-zag
must be of the form

\[
\begin{tikzpicture} [baseline=(current bounding box.center)]
  \matrix (cancellation) [matrix of math nodes]
  {
    XX & [1cm] XY & [1cm]  \\ [7mm]
    & YX & XX \\
  };
  \draw [->] (cancellation-1-1) to node[below] {$c_1$} (cancellation-2-2);
  \draw [->] (cancellation-1-2) -- (cancellation-2-2);
  \draw [->] (cancellation-1-2) to node[above] {$c_2$} (cancellation-2-3);
\end{tikzpicture}
\]

The coefficient $c_1$ must have length one on the left boundary and length zero
on the right, and $c_2$ must have length zero on the left and length one on the
right, or vice-versa. Looking at the Heegaard diagram, the only possibility is
that $c_1$ is produced by domain $A$ and $c_2$ by domain $B$.

In what follows, we show that for each of the domains $A,B,D+C$, and $E+C$, and
any starting and ending generators with matching idempotents, there is exactly
one arrow. These arrows, together with the ones coming from simple horizontal
strips, cover each length-one interval exactly once, which verifies (ID-3).

Domains $A$ and $B$ are straightforward since they involve only length one
coefficients. The next case is the domain $D+C$. If pair 2 is not occupied, the
arrow follows from (\ref{eq:da1.1}) and (\ref{eq:da2.3}):

\[
\begin{tikzpicture} [x=10pt,y=10pt]
  \begin{subsdbig2}{0}{4}{3}\strandup{1}{3}\end{subsdbig2}
  \begin{subdsdml2}{5}{0}\lsinglehor{3}\rsinglehor{3}\end{subdsdml2}
  \begin{subdsdml2}{5}{8}\lsinglehor{1}\rsinglehor{1}\end{subdsdml2}
  \begin{subssdn}{13}{4}{3}\strandup{1}{3}\end{subssdn}
  \begin{subdsdmr2}{18}{0}{3}\lsinglehor{3}\rsinglehor{3}\end{subdsdmr2}
  \begin{subdsdmr2}{18}{8}{3}\lsinglehor{1}\rsinglehor{1}\end{subdsdmr2}
  \begin{subsdbig2}{26}{4}{3}\strandup{1}{3}\end{subsdbig2}
  \draw [->] (8,7) to (8,5);
  \draw [->] (21,7) to (21,5);
  \draw [->] (7,6) to (4,5);
  \draw [->] (12,7) to (9,6);
  \draw [->] (20,6) to (17,5);
  \draw [->] (25,7) to (22,6);
\end{tikzpicture}
\]

If pair 2 is occupied, then the type $\DA$ arrows on the left side depends on
the ordering. However, in either case we get the same arrow after box
tensoring. If upper interval comes first, it follows from (\ref{eq:da1.2}) and
(\ref{eq:da2.1}) (first diagram below). Otherwise, it follows from
(\ref{eq:da1.5}), (\ref{eq:da1.3}), and (\ref{eq:da2.2}) (second diagram below).

\[
\begin{tikzpicture} [x=10pt,y=10pt]
  \begin{subsdbig2}{0}{4}{3}\strandup{1}{3}\end{subsdbig2}
  \begin{subdsdml2}{5}{0}\lsinglehor{3}\rsinglehor{2}\rsinglehor{3}\end{subdsdml2}
  \begin{subdsdml2}{5}{8}\lsinglehor{1}\rsinglehor{1}\rsinglehor{2}\end{subdsdml2}
  \begin{subssdn}{13}{4}{3}\strandup{1}{3}\singlehor{2}\end{subssdn}
  \begin{subdsdmr2}{18}{0}{3}\lsinglehor{2}\lsinglehor{3}\rsinglehor{3}\end{subdsdmr2}
  \begin{subdsdmr2}{18}{8}{3}\lsinglehor{1}\lsinglehor{2}\rsinglehor{1}\end{subdsdmr2}
  \begin{subsdbig2}{26}{4}{3}\strandup{1}{3}\end{subsdbig2}
  \draw [->] (8,7) to (8,5);
  \draw [->] (21,7) to (21,5);
  \draw [->] (7,6) to (4,5);
  \draw [->] (12,7) to (9,6);
  \draw [->] (20,6) to (17,5);
  \draw [->] (25,7) to (22,6);
\end{tikzpicture}
\]

\[
\begin{tikzpicture} [x=10pt,y=10pt]
  \begin{subsdbig2}{0}{7}{3}\strandup{1}{3}\end{subsdbig2}
  \begin{subdsdml2}{5}{3}{3}\lsinglehor{3}\rsinglehor{2}\rsinglehor{3}\end{subdsdml2}
  \begin{subdsdml2}{5}{15}{3}\lsinglehor{1}\rsinglehor{1}\rsinglehor{2}\end{subdsdml2}
  \begin{subssdn}{13}{5}{3}\singlehor{3}\strandup{1}{2}\end{subssdn}
  \begin{subssdn}{13}{11}{3}\singlehor{1}\strandup{2}{3}\end{subssdn}
  \begin{subdsdmr2}{18}{3}{3}\lsinglehor{2}\lsinglehor{3}\rsinglehor{3}\end{subdsdmr2}
  \begin{subdsdmr2}{18}{9}{3}\lsinglehor{1}\lsinglehor{3}\rsinglehor{3}\end{subdsdmr2}
  \begin{subdsdmr2}{18}{15}{3}\lsinglehor{1}\lsinglehor{2}\rsinglehor{1}\end{subdsdmr2}
  \begin{subsdbig2}{26}{13}{3}\strandup{1}{3}\end{subsdbig2}
  \draw [->] (8,14) to (8,8);
  \draw [->] (7,11) to (4,9);
  \draw [->] (12,13) to (9,11.5);
  \draw [->] (12,7) to (9,10.5);
  \draw [->] (21,14.5) to (21,13.5);
  \draw [->] (21,8.5) to (21,7.5);
  \draw [->] (20,14) to (17,13);
  \draw [->] (20,8) to (17,7);
  \draw [->] (25,15) to (22,14);
\end{tikzpicture}
\]

Now for the domain $E+C$: if pair 1 is occupied, the arrow follows from
(\ref{eq:da6.4}) and (\ref{eq:da7.5}):

\[
\begin{tikzpicture} [x=10pt,y=10pt]
  \begin{subsd6m46}{0}{5}{3}\strandup{1}{2}\doublehor{3}{5}\end{subsd6m46}
  \begin{subdsdrrise}{5}{0}\lsinglehor{2}\ldoublehor{3}{5}\rdoublehor{3}{5}\rsinglehor{6}\end{subdsdrrise}
  \begin{subdsdrrise}{5}{10}\lsinglehor{1}\ldoublehor{3}{5}\rdoublehor{3}{5}\rsinglehor{1}\end{subdsdrrise}
  \begin{subsd6m24}{13}{5}{3}\strandup{1}{3}\strandup{5}{6}\end{subsd6m24}
  \begin{subdsdlrise}{18}{0}{3}\lsinglehor{6}\ldoublehor{3}{5}\rdoublehor{3}{5}\rsinglehor{2}\end{subdsdlrise}
  \begin{subdsdlrise}{18}{10}{3}\lsinglehor{1}\ldoublehor{3}{5}\rdoublehor{3}{5}\rsinglehor{1}\end{subdsdlrise}
  \begin{subsd6m46}{26}{5}{3}\strandup{1}{2}\doublehor{3}{5}\end{subsd6m46}
  \draw [->] (8,9.5) to (8,7.5);
  \draw [->] (21,9.5) to (21,7.5);
  \draw [->] (7,8.5) to (4,7.5);
  \draw [->] (12,9.5) to (9,8.5);
  \draw [->] (20,8.5) to (17,7.5);
  \draw [->] (25,9.5) to (22,8.5);
\end{tikzpicture}
\]
and if pair 1 is unoccupied, the arrow follows from (\ref{eq:da6.3}),
(\ref{eq:da6.2}), and (\ref{eq:da7.4}).

\[
\begin{tikzpicture} [x=10pt,y=10pt]
  \begin{subsd6m46}{0}{7}{3}\strandup{1}{2}\end{subsd6m46}
  \begin{subdsdrrise}{5}{0}\lsinglehor{2}\rsinglehor{6}\end{subdsdrrise}
  \begin{subdsdrrise}{5}{18}\lsinglehor{1}\rsinglehor{1}\end{subdsdrrise}
  \begin{subsd6m24}{13}{4.5}{3}\strandup{5}{6}\end{subsd6m24}
  \begin{subsd6m24}{13}{13.5}{3}\strandup{1}{3}\end{subsd6m24}
  \begin{subdsdlrise}{18}{0}{3}\lsinglehor{6}\rsinglehor{2}\end{subdsdlrise}
  \begin{subdsdlrise}{18}{9}{3}\ldoublehor{3}{5}\rsinglehor{2}\end{subdsdlrise}
  \begin{subdsdlrise}{18}{18}{3}\lsinglehor{1}\rsinglehor{1}\end{subdsdlrise}
  \begin{subsd6m46}{26}{15}{3}\strandup{1}{2}\end{subsd6m46}
  \draw [->] (8,17) to (8,8);
  \draw [->] (21,8.5) to (21,7.5);
  \draw [->] (21,17.5) to (21,16.5);
  \draw [->] (7,12.5) to (4,10.5);
  \draw [->] (12,17) to (9,13);
  \draw [->] (12,8) to (9,12);
  \draw [->] (20,8) to (17,7);
  \draw [->] (20,17) to (17,16);
  \draw [->] (25,18) to (22,17);
\end{tikzpicture}
\]

This finishes the verification of the involution relation, except for the case
of a short underslide. The computations in this case involve size 2 intervals
where the top and bottom points are paired, so we consider them separately. The
diagram is shown in Figure \ref{fig:shortinv}.

\begin{figure} [h!tb] \centering
  \begin{tikzpicture} [x=15pt,y=15pt]
    \draw (0,0) to (0,4); \draw (4,0) to (4,4); \draw (8,0) to (8,4);
    \draw (0,1) to (8,1); \draw (0,3) to (8,3);
    \draw[rounded corners=15] (0,2) to (2,2) to (2,1);
    \draw[rounded corners=15] (2,3) to (2,2) to (4,2);
    \draw[rounded corners=15] (4,2) to (6,2) to (6,3);
    \draw[rounded corners=15] (6,1) to (6,2) to (8,2);
    \draw (4,2.5) node[left] {$B$};
    \draw (4,1.5) node[left] {$A$};
    \filldraw[fill=white] (2,1) circle (0.15);
    \filldraw[fill=white] (2,3) circle (0.15);
    \filldraw[fill=white] (6,1) circle (0.15);
    \filldraw[fill=white] (6,3) circle (0.15);
    \draw (2,3) circle (0.7); \draw (6,3) circle (0.7);
  \end{tikzpicture}
  \caption{Diagram for involution, short underslide case. Domains $A$ and $B$
    are connected components of
    $\mathcal{H}\setminus\{\bm{\alpha},\bm{\beta}\}$ containing the respective
    letters.}
  \label{fig:shortinv}
\end{figure}
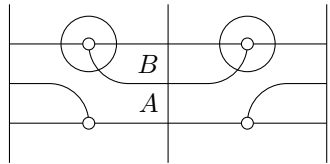

The only difference in the verification is computing the arrow covering the
upper length-one interval. This arrow comes from the domain $A+2B$, and is
produced by (\ref{eq:da5.1}), (\ref{eq:da5.3}), and (\ref{eq:da4.6}):

\[
\begin{tikzpicture} [x=10pt,y=10pt]
  \begin{subssdn}{0}{7}{3}\strandup{2}{3}\end{subssdn}
  \begin{subsdn}{5}{3}{3}\ldoublehor{1}{3}\rdoublehor{1}{3}\end{subsdn}
  \begin{subsdn}{5}{15}{3}\lsinglehor{2}\rsinglehor{2}\end{subsdn}
  \begin{subssdn}{13}{5}{3}\strandup{1}{3}\end{subssdn}
  \begin{subssdn}{13}{11}{3}\strandup{2}{3}\end{subssdn}
  \begin{subsdn}{18}{3}{3}\ldoublehor{1}{3}\rdoublehor{1}{3}\end{subsdn}
  \begin{subsdn}{18}{9}{3}\ldoublehor{1}{3}\rsinglehor{2}\end{subsdn}
  \begin{subsdn}{18}{15}{3}\lsinglehor{2}\rsinglehor{2}\end{subsdn}
  \begin{subssdn}{26}{7}{3}\strandup{2}{3}\end{subssdn}
  \draw [->] (8,14) to (8,8);
  \draw [->] (7,11) to (4,9);
  \draw [->] (12,13) to (9,11.5);
  \draw [->] (12,7) to (9,10.5);
  \draw [->] (21,14.5) to (21,13.5);
  \draw [->] (21,8.5) to (21,7.5);
  \draw [->] (20,14) to (17,13);
  \draw [->] (20,8) to (17,7);
  \draw [->] (25,9) to (22,8);
\end{tikzpicture}
\]
This concludes all cases of the involution relation. Two results follow
immediately from this relation.

\begin{corollary}\label{cor:arcslideinvertible}
  The bimodule $\hatda(\tau)$ is quasi-invertible, and the same is true for any
  box tensor product of such bimodules.
\end{corollary}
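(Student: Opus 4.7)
The plan is to deduce this directly from the involution relation, which was just verified in Section \ref{sec:relinvolution}. Given any arcslide $\tau : F^\circ(\slz_1) \to F^\circ(\slz_2)$, there is an obvious ``reverse'' arcslide $\tau^{-1} : F^\circ(\slz_2) \to F^\circ(\slz_1)$ that slides $b_1'$ back over $c_1$ to recover the original pointed matched circle. The composition $\tau^{-1} \circ \tau$ is exactly one of the four cases of the involution relation (and similarly for $\tau \circ \tau^{-1}$). Thus the calculation completed in Section \ref{sec:relinvolution} shows
\[ \widehat{CFDA}(\tau^{-1}) \boxtimes \widehat{CFDA}(\tau) \simeq \widehat{CFDA}(\iz) \simeq \mathbb{I}, \]
with the analogous statement in the other order obtained by applying the same argument to the reverse composition. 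Hence $\widehat{CFDA}(\tau^{-1})$ is a two-sided quasi-inverse of $\widehat{CFDA}(\tau)$, proving the first half of the corollary.

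For the second half, I would use the elementary fact that quasi-invertibility is closed under tensor product in any monoidal homotopy category: if $M_1, \dots, M_n$ are type $DA$ bimodules with quasi-inverses $M_1', \dots, M_n'$ (meaning $M_i \boxtimes M_i' \simeq \mathbb{I}$ and $M_i' \boxtimes M_i \simeq \mathbb{I}$ over the appropriate algebras), then $M_1 \boxtimes \cdots \boxtimes M_n$ has quasi-inverse $M_n' \boxtimes \cdots \boxtimes M_1'$. This follows by inductively collapsing the innermost pair $M_n \boxtimes M_n' \simeq \mathbb{I}$ and using associativity of the box tensor product together with the fact that box tensor product respects homotopy equivalence. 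Applying this to $M_i = \widehat{CFDA}(\tau_i)$ with $M_i' = \widehat{CFDA}(\tau_i^{-1})$ gives the result.

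There is essentially no obstacle here beyond checking that the two orientations of the involution relation (sliding $b_1$ over $c_1$ from either side, with middle idempotent on left or right) all fall under the four cases alluded to in Section \ref{sec:relinvolution}, so that the computation verifying $\widehat{CFDA}(\tau^{-1}) \boxtimes \widehat{CFDA}(\tau) \simeq \mathbb{I}$ really does apply to \emph{any} arcslide $\tau$, not just the specific one drawn in Figure \ref{fig:diaginv}. Since the paper has already noted that the computations in the other three cases are similar, this is not a genuine difficulty. The proof itself is therefore very short, and its importance lies in enabling the use of the easier rigidity lemma (Lemma \ref{lem:idrecognize2}) in verifying the remaining four arcslide relations, where condition (ID-3') is now trivially available for any tensor product of arcslide bimodules.
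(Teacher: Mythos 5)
Your proposal is correct and matches the paper's own argument: the paper likewise takes $\widehat{CFDA}$ of the inverse arcslide as the quasi-inverse (with the involution relation of Section \ref{sec:relinvolution} supplying the needed homotopy equivalences) and then notes that tensor products of quasi-invertible bimodules are quasi-invertible. You have simply spelled out the details that the paper leaves implicit.
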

\begin{proof}
  For a single arcslide, the quasi-inverse is given by $\hatda(\tau^{-1})$. It
  is clear that box tensor products of quasi-invertible bimodules are also
  quasi-invertible.
\end{proof}

The computations here allow us to prove an uniqueness statement on
$\hatdd(\tau)$. A similar statement is proved in \cite{LOT10c}.
\begin{corollary}
  Let $\tau:F^\circ(\slz_1)\to F^\circ(\slz_2)$ be an arcslide. If a bimodule
  $^{\sla(\slz_1),\sla(-\slz_2)}M$ is stable, has the same generators and
  gradings as $\hatdd(\tau)$, and its type $\DD$ action matches that of
  $\hatdd(\tau)$ on all arrows with total lengths of coefficients at most 3,
  then $M$ is homotopy equivalent to $\hatdd(\tau)$.
\end{corollary}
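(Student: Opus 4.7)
The plan is to mirror the involution-relation argument from Section \ref{sec:relinvolution}, using $M$ in place of $\widehat{CFDD}(\tau)$. Let $\tau^{-1}:F^\circ(\slz_2)\to F^\circ(\slz_1)$ be the inverse arcslide, so that the composition $\tau^{-1}\circ\tau$ realizes an instance of the involution relation. Form the type $DA$ bimodule
\[ N = M \boxtimes \widehat{CFAA}(\mathbb{I}_{\slz_2}) \boxtimes \widehat{CFDD}(\tau^{-1}) \boxtimes \widehat{CFAA}(\mathbb{I}_{\slz_1}) \]
over $\sla(\slz_1)$-$\sla(\slz_1)$. If $M$ were replaced by $\widehat{CFDD}(\tau)$, then $N$ would equal $\widehat{CFDA}(\tau)\boxtimes\widehat{CFDA}(\tau^{-1})$, which Section \ref{sec:relinvolution} shows is homotopy equivalent to $\mathbb{I}$. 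The goal is to prove $N\simeq\mathbb{I}$ with the weaker hypothesis on $M$, and then extract $M\simeq\widehat{CFDD}(\tau)$ by quasi-invertibility.

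First I would re-run the cancellation-and-rigidity procedure of Section \ref{sec:relinvolution} on $N$. That procedure enumerates generators by combinatorial type ($XX$, $XY$, $YX$), cancels paired generators using short-coefficient arrows coming from a fixed list of Heegaard domains, and verifies the hypotheses of Lemma \ref{lem:idrecognize1} on the residue. The idempotent and grading data for $M$ match those of $\widehat{CFDD}(\tau)$ by hypothesis, so conditions (ID-1), (ID-2), and (ID-4) are inherited. The cancellations and the length-1 action check (ID-3) only invoke $\widehat{CFDD}(\tau)$-arrows whose algebra coefficients have total length at most 3; by hypothesis $M$ agrees with $\widehat{CFDD}(\tau)$ on precisely these arrows, so the entire Section \ref{sec:relinvolution} computation proceeds unchanged, yielding $N\simeq\mathbb{I}$.

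Once $N\simeq\mathbb{I}$ is in hand, I would tensor on the right with $\widehat{CFDA}(\tau)$ and invoke Corollary \ref{cor:arcslideinvertible} together with the established involution relation to obtain
\[ M \boxtimes \widehat{CFAA}(\mathbb{I}_{\slz_2}) \simeq \widehat{CFDA}(\tau). \]
A further tensor on the right with $\widehat{CFDD}(\mathbb{I}_{\slz_2})$, using Corollary \ref{cor:ddtimesdais1} to quasi-invert $\widehat{CFAA}(\mathbb{I}_{\slz_2})$, then yields $M\simeq\widehat{CFDD}(\tau)$ as required.

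The main obstacle is the bookkeeping step: carefully auditing each of the five domain contributions (and in particular the zig-zag cancellations which pass through $\widehat{CFAA}(\iz)$) in the Section \ref{sec:relinvolution} proof, to confirm that no argument relies on a $\widehat{CFDD}(\tau)$-arrow with algebra coefficient length above the stated bound. The derivations for domains $D$ and $E$ require attention, since the $DA$-level arrows used there arise from a short sequence of $DD$-arrows multiplied through $\widehat{CFAA}(\iz)$; one must check that the total length budget is distributed across those intermediate $DD$-arrows in a way covered by the hypothesis on $M$. Once this audit is complete, the rest of the argument is a purely formal consequence of quasi-invertibility.
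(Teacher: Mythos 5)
Your proposal is correct and follows essentially the same route as the paper: tensor $M$ with $\widehat{CFAA}(\iz)$ and $\widehat{CFDA}(\tau^{-1})$, re-run the involution-relation computation (which indeed only uses type $DD$ arrows whose coefficients have total length at most 3) to get the identity, and then peel off the quasi-invertible factors $\widehat{CFDA}(\tau^{-1})$ and $\widehat{CFAA}(\iz)$ to conclude $M\simeq\widehat{CFDD}(\tau)$.
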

\begin{proof}
  Let $M_{\DA}=\hataa(\mathbb{I})\boxtimes M$. Since we only used type $\DD$
  arrows whose coefficients have total length at most 3 in this section, we can
  perform the same computations on $M_{\DA}$ as on $\hatda(\tau)$, showing that:
  \[
  M_{\DA}\boxtimes\hatda(\tau^{-1})\simeq \mathbb{I}\simeq
  \hatda(\tau)\boxtimes\hatda(\tau^{-1}). \] Since $\hatda(\tau^{-1})$ is
  quasi-invertible, we see $M_{\DA}$ is homotopy equivalent to
  $\hatda(\tau)$. Since $\hataa(\mathbb{I})$ is also quasi-invertible, we see
  $M$ is homotopy equivalent to $\hatdd(\tau)$.
\end{proof}

\subsection{Other relations on arcslides}\label{sec:relothers}
For each of the other relations on arcslides, we check the conditions in Lemma
\ref{lem:idrecognize2}. Condition (ID-2) is checked by grading computations as
before. (ID-3) follows from Corollary \ref{cor:arcslideinvertible}, with the
quasi-inverse $M'$ being the box tensor product of the inverse arcslides in the
opposite order. (ID-4) follows from Remark \ref{rem:dastabilization} as
before. So it remains to show (ID-1) (with the same technique as given here we
can show (ID-1) for the inverse $M'$).

We now go through each of the remaining relations.

\subsubsection{Triangle}
For the triangle relation, the Heegaard diagram for one of the possible cases is
shown in Figure \ref{fig:diagtriangle}. The other cases differ from this one
only by switching the ordering of the points and underslides with
overslides. The enumeration of generators, and which pairs of generators can be
cancelled, are essentially similar.

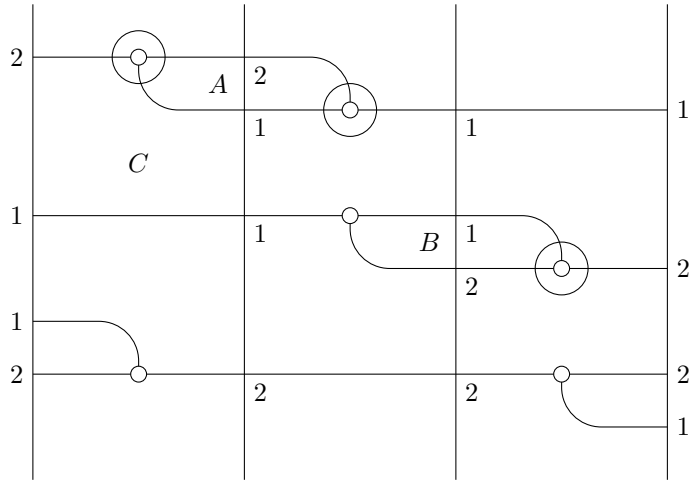
\begin{figure} [h!tb] \centering
  \begin{tikzpicture} [x=20pt,y=20pt]
    \draw (3.5,7.5) node{$A$};
    \draw (7.5,4.5) node{$B$};
    \draw (2,6) node{$C$};
    \draw (0,0) to (0,9); \draw (4,0) to (4,9); \draw (8,0) to (8,9); \draw (12,0) to (12,9);
    \draw (0,2) to (12,2); \draw (0,5) to (8,5); \draw (0,8) to (4,8);
    \draw (8,4) to (12,4); \draw (4,7) to (12,7);
    \draw [rounded corners=15] (0,3) to (2,3) to (2,2);
    \draw [rounded corners=15] (10,2) to (10,1) to (12,1);
    \draw [rounded corners=15] (2,8) to (2,7) to (4,7);
    \draw [rounded corners=15] (4,8) to (6,8) to (6,7);
    \draw [rounded corners=15] (6,5) to (6,4) to (8,4);
    \draw [rounded corners=15] (8,5) to (10,5) to (10,4);
    \filldraw[fill=white] (2,2) circle (0.15);
    \filldraw[fill=white] (2,8) circle (0.15);
    \filldraw[fill=white] (6,5) circle (0.15);
    \filldraw[fill=white] (6,7) circle (0.15);
    \filldraw[fill=white] (10,2) circle (0.15);
    \filldraw[fill=white] (10,4) circle (0.15);
    \draw (2,8) circle (0.5);
    \draw (6,7) circle (0.5);
    \draw (10,4) circle (0.5);
    \draw (0,2) node[left] {$2$};
    \draw (0,3) node[left] {$1$};
    \draw (0,5) node[left] {$1$};
    \draw (0,8) node[left] {$2$};
    \draw (4,2) node[below right] {$2$};
    \draw (4,5) node[below right] {$1$};
    \draw (4,7) node[below right] {$1$};
    \draw (4,8) node[below right] {$2$};
    \draw (8,2) node[below right] {$2$};
    \draw (8,4) node[below right] {$2$};
    \draw (8,5) node[below right] {$1$};
    \draw (8,7) node[below right] {$1$};
    \draw (12,1) node[right] {$1$};
    \draw (12,2) node[right] {$2$};
    \draw (12,4) node[right] {$2$};
    \draw (12,7) node[right] {$1$};
  \end{tikzpicture}
  \caption{Heegaard diagram for the triangle relation. Domains $A$, $B$ and $C$
    are connected components of $\mathcal{H}\setminus\{\bm{\alpha},\bm{\beta}\}$
    containing the respective letters.}
  \label{fig:diagtriangle}
\end{figure}

The roles of the numbered pairs are as follows:
\begin{itemize}
\item Arcslide 1: $C=2, B=1$.
\item Arcslide 2: $C=1, B=2$.
\item Arcslide 3: $C=2, B=1$.
\end{itemize}
Only the sequence $YXY$ is forbidden. For that sequence, pair 1 must be occupied
after the first arcslide, and therefore after the second arcslide, so type $Y$
is not possible at the third arcslide. The possible types are:
\begin{itemize}
\item $_{(*)}X_{(*)}X_{(*)}X_{(*)}$
\item $_{(2)}Y_{(1)}X_{(1)}X_{(1)}$
\item $_{(1)}X_{(1)}Y_{(2)}X_{(2)}$
\item $_{(2)}X_{(2)}X_{(2)}Y_{(1)}$
\item $_{(2)}Y_{(1)}Y_{(2)}X_{(2)}$
\item $_{(1)}X_{(1)}Y_{(2)}Y_{(1)}$
\item $_{(2)}Y_{(1)}Y_{(2)}Y_{(1)}$
\item $_{(1)}X_{(1)}Y_{(2)}X_{(2)}$
\end{itemize}

There are two domains that give rise to cancellable arrows: domain $A$ and $B$
as shown in the figure.

Domain $A$ gives rise to arrows from $YY*$ to $XX*$, and domain $B$ gives rise
to arrows from $*YY$ to $*XX$. So the cancellable arrows are:
\begin{itemize}
\item $YYY \to\,XXY$
\item $YYY \to\,YXX$
\item $_{(1)}XYY_{(1)} \to\,_{(1)}XXX_{(1)}$
\item $_{(2)}YYX_{(2)} \to\,_{(2)}XXX_{(2)}$
\end{itemize}
We choose to cancel everything except the second set $YYY \to\,YXX$ (cancelling
the first set of arrows eliminates the option of cancelling the second). The
remaining generators are:
\[ _{(2)}YXX_{(1)},\quad _{(1)}XYX_{(2)},\quad _{()}XXX_{()}\;\mathrm{and}\;
_{(12)}XXX_{(12)}. \] Since each type of idempotents at the two ends occurs
exactly once, we have verified (ID-1). Note pair 1 at the left becomes pair 2 at
the right, and vice versa, under the bijection of pairs coming from the equality
of pointed matched circles at the two ends.

For the triangle relation, it is not immediately clear that there exists a
refined relative grading where all generators have grading zero, so we give more
details on verifying this condition. Choose a generator in class
$_{(2)}YXX_{(1)}$ as the base generator (with refined grading zero). To verify
that any generator of class $_{(1)}XYX_{(2)}$ has grading zero, it suffices to
check that any potential domain connecting them has the expected grading. The
domain $B+C$ is such a domain. Its grading can be computed to be the same as
that of a simple horizontal strip in the Heegaard diagram for identity, with the
same boundaries at the two sides. If the genus is greater than 2, then
generators of type $_{()}XXX_{()}$ and $_{(12)}XXX_{(12)}$ exist. They are
connected to $_{(2)}YXX_{(1)}$ or $_{(1)}XYX_{(2)}$ by horizontal strips above
either $A$ or $B$. These domains also have the same gradings as the simple
horizontal strips in the diagram for identity with the same boundaries, so the
latter two types of generators must also have grading zero.

\subsubsection{Commutativity}
The Heegaard diagram for one of the cases of the commutativity relation is shown
in Figure \ref{fig:diagcomm} (as in the triangle case, the other possibilities
are similar).

\begin{figure} [h!tb] \centering
  \begin{tikzpicture} [x=15pt,y=15pt]
    \draw (10,1.5) node{$B$};
    \draw (6,6.5) node{$A$};
    \draw (0,0) to (0,11); \draw (4,0) to (4,11); \draw (8,0) to (8,11); \draw (12,0) to (12,11); \draw (16,0) to (16,11);
    \draw (0,2) to (16,2); \draw (0,4) to (16,4); \draw (0,7) to (16,7); \draw (0,9) to (16,9);
    \draw[rounded corners=15] (0,5) to (6,5) to (6,4);
    \draw[rounded corners=15] (6,2) to (6,1) to (14,1) to (14,2);
    \draw[rounded corners=15] (14,4) to (14,5) to (16,5);
    \draw[rounded corners=15] (0,10) to (2,10) to (2,9);
    \draw[rounded corners=15] (2,7) to (2,6) to (10,6) to (10,7);
    \draw[rounded corners=15] (10,9) to (10,10) to (16,10);
    \filldraw[fill=white] (2,9) circle (0.15);
    \filldraw[fill=white] (2,7) circle (0.15);
    \draw (2,7) circle (0.5);
    \filldraw[fill=white] (6,2) circle (0.15);
    \filldraw[fill=white] (6,4) circle (0.15);
    \draw (6,2) circle (0.5);
    \filldraw[fill=white] (10,9) circle (0.15);
    \filldraw[fill=white] (10,7) circle (0.15);
    \draw (10,7) circle (0.5);
    \filldraw[fill=white] (14,2) circle (0.15);
    \filldraw[fill=white] (14,4) circle (0.15);
    \draw (14,2) circle (0.5);
    \draw (0,2) node[left] {$3$}; \draw (0,4) node[left] {$3$}; \draw (0,5) node[left] {$4$};
    \draw (0,7) node[left] {$1$}; \draw (0,9) node[left] {$1$}; \draw (0,10) node[left] {$2$};
    \draw (4,6) node[below right] {$2$}; \draw (4,7) node[below right] {$1$}; \draw (4,9) node[below right] {$1$};
    \draw (8,1) node[below right] {$4$}; \draw (8,2) node[below right] {$3$}; \draw (8,4) node[below right] {$3$};
    \draw (12,7) node[below right] {$1$}; \draw (12,9) node[below right] {$1$}; \draw (12,10) node[below right] {$2$};
    \draw (16,2) node[right] {$3$}; \draw (16,4) node[right] {$3$}; \draw (16,5) node[right] {$4$};
  \end{tikzpicture}
  \caption{Heegaard diagram for the commutativity relation. Domains $A$ and $B$
    are connected components of $\mathcal{H}\setminus\{\bm{\alpha},\bm{\beta}\}$
    containing the respective letters.}
  \label{fig:diagcomm}
\end{figure}
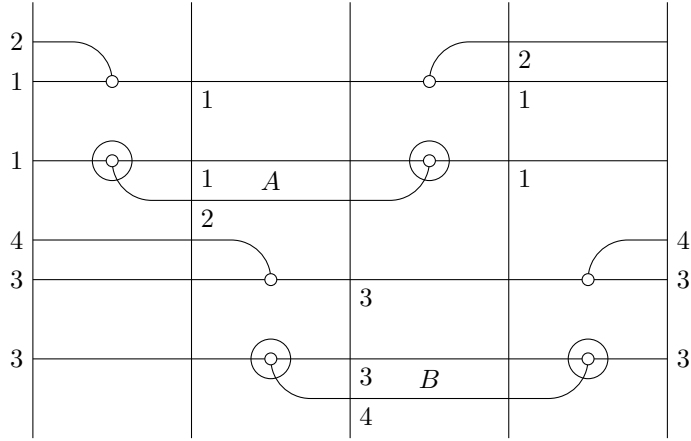

The role of the numbered pairs are as follows:
\begin{itemize}
\item Arcslide 1: $C=1, B=2$.
\item Arcslide 2: $C=3, B=4$.
\item Arcslide 3: $C=1, B=2$.
\item Arcslide 4: $C=3, B=4$.
\end{itemize}

The restriction on the types is that at most one of the types at arcslides 1 and
3 can be $Y$, and at most one at arcslides 2 and 4 can be $Y$. The possibilities
are:

\begin{itemize}
\item $_{(*)}X_{(*)}X_{(*)}X_{(*)}X_{(*)}$
\item $_{(1)}Y_{(2)}X_{(2)}X_{(2)}X_{(2)}$ with pairs 3 and/or 4 possibly added
  to each idempotent (similarly in the next three types).
\item $_{(1)}X_{(1)}X_{(1)}Y_{(2)}X_{(2)}$ with 3 and/or 4 possibly added.
\item $_{(3)}X_{(3)}Y_{(4)}X_{(4)}X_{(4)}$ with 1 and/or 2 possibly added.
\item $_{(3)}X_{(3)}X_{(3)}X_{(3)}Y_{(4)}$ with 1 and/or 2 possibly added.
\item $_{(13)}Y_{(23)}Y_{(24)}X_{(24)}X_{(24)}$
\item $_{(13)}Y_{(23)}X_{(23)}X_{(23)}Y_{(24)}$
\item $_{(13)}X_{(13)}Y_{(14)}Y_{(24)}X_{(24)}$
\item $_{(13)}X_{(13)}X_{(13)}Y_{(23)}Y_{(24)}$
\end{itemize}

The two domains giving rise to cancellable arrows are labelled $A$ and $B$ in
the figure.

Domain $A$ gives rise to arrows from $YaXb$ to $XaYb$, for any valid choice of
$a,b\in\{X,Y\}$. Likewise, domain $B$ gives rise to arrows from $aYbX$ to
$aXbY$. So the cancellable arrows are:
\begin{itemize}
\item $_{(1)}YXXX_{(2)} \to\,_{(1)}XXYX_{(2)}$ with 3 and/or 4 possibly added
\item $_{(3)}XYXX_{(4)} \to\,_{(3)}XXXY_{(4)}$ with 1 and/or 2 possibly added
\end{itemize}
and

\begin{tikzpicture}
  \matrix (cancellation) [matrix of math nodes]
  {
    _{(13)}YYXX_{(24)} & [1cm] _{(13)}YXXY_{(24)} \\ [7mm]
    _{(13)}XYYX_{(24)} & [1cm] _{(13)}XXYY_{(24)} \\
  };
  \draw [->] (cancellation-1-1) -- (cancellation-1-2);
  \draw [->] (cancellation-1-1) -- (cancellation-2-1);
  \draw [->] (cancellation-1-2) -- (cancellation-2-2);
  \draw [->] (cancellation-2-1) -- (cancellation-2-2);
\end{tikzpicture}

The first two arrows cancel all generators with one $Y$. For generators with two
$Y$'s, we can either cancel both horizontal arrows or both vertical arrows in
the square above. In the end, only generators of type $_{(*)}XXXX_{(*)}$ remain,
which checks (ID-1).

\subsubsection{Left and right pentagon}

The Heegaard diagram for one of the cases of the left pentagon relation is shown
in Figure \ref{fig:diagpentagon}. Other cases of the left and right pentagon
relation are similar.

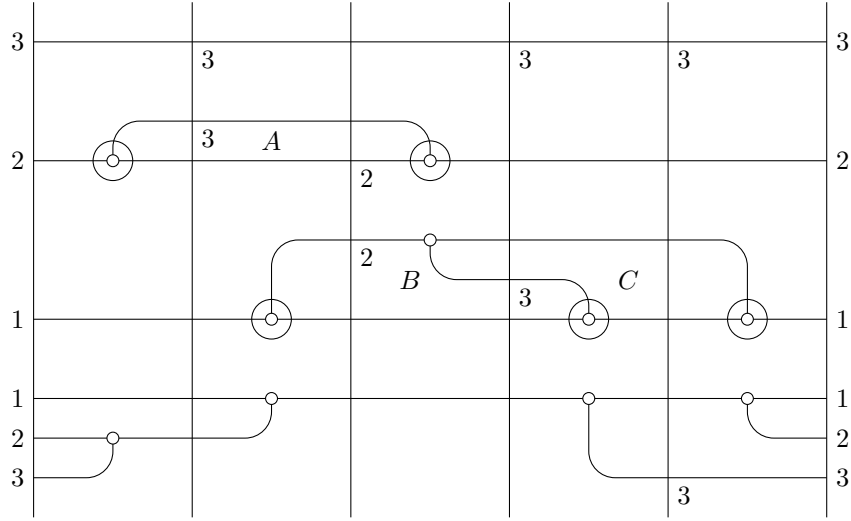
\begin{figure} [h!tb] \centering
  \begin{tikzpicture} [x=15pt,y=15pt]
    \draw (6,9.5) node{$A$};
    \draw (9.5,6) node{$B$};
    \draw (15,6) node{$C$};
    \draw (0,0) to (0,13); \draw (4,0) to (4,13); \draw (8,0) to (8,13);
    \draw (12,0) to (12,13); \draw (16,0) to (16,13); \draw (20,0) to (20,13);
    \draw (0,3) to (20,3); \draw (0,5) to (20,5); \draw (0,9) to (20,9); \draw (0,12) to (20,12);
    \draw[rounded corners=10] (0,2) to (6,2) to (6,3); \draw[rounded corners=10] (0,1) to (2,1) to (2,2);
    \draw[rounded corners=10] (20,2) to (18,2) to (18,3); \draw[rounded corners=10] (20,1) to (14,1) to (14,3);
    \draw[rounded corners=10] (6,5) to (6,7) to (18,7) to (18,5);
    \draw[rounded corners=10] (10,7) to (10,6) to (14,6) to (14,5);
    \draw[rounded corners=10] (2,9) to (2,10) to (10,10) to (10,9);
    \draw (0,1) node[left] {$3$}; \draw (0,2) node[left] {$2$}; \draw (0,3) node[left] {$1$};
    \draw (0,5) node[left] {$1$}; \draw (0,9) node[left] {$2$}; \draw (0,12) node[left] {$3$};
    \draw (4,10) node[below right] {$3$}; \draw (4,12) node[below right] {$3$};
    \draw (8,7) node[below right] {$2$}; \draw (8,9) node[below right] {$2$};
    \draw (12,6) node[below right] {$3$}; \draw (12,12) node[below right] {$3$};
    \draw (16,1) node[below right] {$3$}; \draw (16,12) node[below right] {$3$};
    \draw (20,1) node[right] {$3$}; \draw (20,2) node[right] {$2$}; \draw (20,3) node[right] {$1$};
    \draw (20,5) node[right] {$1$}; \draw (20,9) node[right] {$2$}; \draw (20,12) node[right] {$3$};
    \filldraw[fill=white] (2,2) circle (0.15); \filldraw[fill=white] (2,9) circle (0.15); \draw (2,9) circle (0.5);
    \filldraw[fill=white] (6,3) circle (0.15); \filldraw[fill=white] (6,5) circle (0.15); \draw (6,5) circle (0.5);
    \filldraw[fill=white] (10,7) circle (0.15); \filldraw[fill=white] (10,9) circle (0.15); \draw (10,9) circle (0.5);
    \filldraw[fill=white] (14,3) circle (0.15); \filldraw[fill=white] (14,5) circle (0.15); \draw (14,5) circle (0.5);
    \filldraw[fill=white] (18,3) circle (0.15); \filldraw[fill=white] (18,5) circle (0.15); \draw (18,5) circle (0.5);
  \end{tikzpicture}
  \caption{Heegaard diagram for the pentagon relation. Domains $A$, $B$, and $C$
    are connected components of $\mathcal{H}\setminus\{\bm{\alpha},\bm{\beta}\}$
    containing the respective letters.}
  \label{fig:diagpentagon}
\end{figure}

The role of the numbered pairs are as follows:
\begin{itemize}
\item Arcslide 1: $C=2, B=3$.
\item Arcslide 2: $C=1, B=2$.
\item Arcslide 3: $C=2, B=3$.
\item Arcslide 4: $C=1, B=3$.
\item Arcslide 5: $C=1, B=2$.
\end{itemize}

The possible types are:

\begin{itemize}
\item $_{(*)}X_{(*)}X_{(*)}X_{(*)}X_{(*)}X_{(*)}$
\item $_{(12)}Y_{(13)}Y_{(23)}X_{(23)}X_{(23)}X_{(23)}$
\item $_{(12)}X_{(12)}X_{(12)}Y_{(13)}X_{(13)}Y_{(23)}$
\item $_{(12)}Y_{(13)}X_{(13)}X_{(13)}X_{(13)}Y_{(23)}$
\item $_{(1)}X_{(1)}Y_{(2)}Y_{(3)}X_{(3)}X_{(3)}$
\item $_{(2)}Y_{(3)}X_{(3)}X_{(3)}X_{(3)}X_{(3)}$, $_{(12)}Y_{(13)}X_{(13)}X_{(13)}X_{(13)}X_{(13)}$
\item $_{(1)}X_{(1)}Y_{(2)}X_{(2)}X_{(2)}X_{(2)}$, $_{(13)}X_{(13)}Y_{(23)}X_{(23)}X_{(23)}X_{(23)}$
\item $_{(2)}X_{(2)}X_{(2)}Y_{(3)}X_{(3)}X_{(3)}$, $_{(12)}X_{(12)}X_{(12)}Y_{(13)}X_{(13)}X_{(13)}$
\item $_{(1)}X_{(1)}X_{(1)}X_{(1)}Y_{(3)}X_{(3)}$, $_{(12)}X_{(12)}X_{(12)}X_{(12)}Y_{(23)}X_{(23)}$
\item $_{(1)}X_{(1)}X_{(1)}X_{(1)}X_{(1)}Y_{(2)}$, $_{(13)}X_{(13)}X_{(13)}X_{(13)}X_{(13)}Y_{(23)}$
\end{itemize}

Domains $A, B,$ and $C$ give rise to the following arrows:
\begin{itemize}
\item Domain $A$: $XXY**\to\,YXX**$
\item Domain $B$: $*XXY*\to\,*YYX*$
\item Domain $C$: $**YXY\to\,**XYX$
\end{itemize}

Other domains that may give arrows are $B+C$ and $A+B$. We first analyze $B+C$,
showing that it will always contribute an arrow whenever idempotent matches. The
calculation involves box tensoring the four type $\DA$ bimodules as shown in
Figure \ref{fig:domainbplusc}.

\begin{figure} [h!tb] \centering
  \begin{tikzpicture} [x=15pt,y=15pt]
    \draw (0,-0.5) to (0,2.5); \draw (4,-0.5) to (4,2.5); \draw (8,-0.5) to (8,2.5);
    \draw (12,-0.5) to (12,2.5); \draw (16,-0.5) to (16,2.5);
    \draw (-0.5,0) to (16.5,0);
    \draw[rounded corners=15] (2,0) to (2,2) to (14,2) to (14,0);
    \draw[rounded corners=10] (6,2) to (6,1) to (10,1) to (10,0);
    \filldraw[fill=white] (2,0) circle (0.15);
    \filldraw[fill=white] (6,2) circle (0.15);
    \filldraw[fill=white] (10,0) circle (0.15);
    \filldraw[fill=white] (14,0) circle (0.15);
    \draw (2,0) circle (0.5); \draw (10,0) circle (0.5);
    \draw (14,0) circle (0.5);
    \draw (2,-1) node{$1$};
    \draw (6,-1) node{$2$};
    \draw (10,-1) node{$3$};
    \draw (14,-1) node{$4$};
  \end{tikzpicture}
  \caption{Domain $B+C$.}
  \label{fig:domainbplusc}
\end{figure}
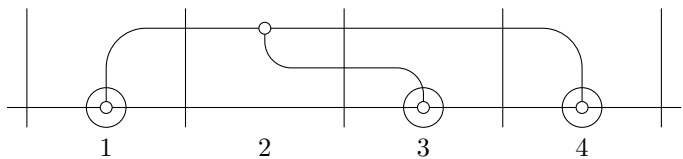

The arrow needed in the fourth piece is simple. For the third piece, there are
several ways to cover the domain. First, if pair 3 is unoccupied in the middle
pieces, we use (\ref{eq:da1.1}):

\[
\begin{tikzpicture} [x=10pt,y=10pt]
  \begin{subssdn}{0}{2}{3}\strandup{1}{3}\end{subssdn}
  \begin{subdsdmr2}{5}{0}\lsinglehor{3}\rsinglehor{3}\end{subdsdmr2}
  \begin{subdsdmr2}{5}{6}\lsinglehor{1}\rsinglehor{1}\end{subdsdmr2}
  \begin{subsdbig2}{13}{4}\strandup{1}{3}\end{subsdbig2}
  \draw [->] (8,5.5) to (8,4.5);
  \draw [->] (7,5) to (4,4);
  \draw [->] (12,6) to (9,5);
\end{tikzpicture}
\]

If pair 3 is occupied, there are two different ways: using (\ref{eq:da1.2}), or
using (\ref{eq:da1.5}) and (\ref{eq:da1.3}):

\[
\begin{tikzpicture} [x=10pt,y=10pt]
  \begin{subssdn}{0}{2}{3}\strandup{1}{3}\singlehor{2}\end{subssdn}
  \begin{subdsdmr2}{5}{0}\lsinglehor{2}\lsinglehor{3}\rsinglehor{3}\end{subdsdmr2}
  \begin{subdsdmr2}{5}{6}\lsinglehor{1}\lsinglehor{2}\rsinglehor{1}\end{subdsdmr2}
  \begin{subsdbig2}{13}{4}\strandup{1}{3}\end{subsdbig2}
  \draw [->] (8,5.5) to (8,4.5);
  \draw [->] (7,5) to (4,4);
  \draw [->] (12,6) to (9,5);
\end{tikzpicture}
\]
and
\[
\begin{tikzpicture} [x=10pt,y=10pt]
  \begin{subssdn}{0}{2}{3}\strandup{1}{2}\singlehor{3}\end{subssdn}
  \begin{subssdn}{0}{8}{3}\strandup{2}{3}\singlehor{1}\end{subssdn}
  \begin{subdsdmr2}{5}{0}\lsinglehor{2}\lsinglehor{3}\rsinglehor{3}\end{subdsdmr2}
  \begin{subdsdmr2}{5}{6}\lsinglehor{1}\lsinglehor{3}\rsinglehor{3}\end{subdsdmr2}
  \begin{subdsdmr2}{5}{12}\lsinglehor{1}\lsinglehor{2}\rsinglehor{1}\end{subdsdmr2}
  \begin{subsdbig2}{13}{10}\strandup{1}{3}\end{subsdbig2}
  \draw [->] (8,5.5) to (8,4.5);
  \draw [->] (8,11.5) to (8,10.5);
  \draw [->] (7,5) to (4,4);
  \draw [->] (7,11) to (4,10);
  \draw [->] (12,12) to (9,11);
\end{tikzpicture}
\]

Now looking at the possible arrows in the second piece, we see there is always
exactly one way to continue forming the arrow in the box tensor product to the
second piece (and then trivially to the first piece). If pair 3 is unoccupied,
we use (\ref{eq:da2.3}). If pair 3 is occupied, we use either (\ref{eq:da2.1})
or (\ref{eq:da2.2}), depending on the ordering $<_\slz$. This shows that the
domain $B+C$ gives rise to arrows:
\begin{itemize}
\item $*_{(1)}X_{(1)}X_{(1)}X_{(1)}Y_{(2)}
  \to\,*_{(1)}Y_{(2)}X_{(2)}X_{(2)}X_{(2)}$
\item $*_{(13)}X_{(13)}X_{(13)}X_{(13)}Y_{(23)}
  \to\,*_{(13)}Y_{(23)}X_{(23)}X_{(23)}X_{(23)}$
\end{itemize}

Finally, we consider the domain $A+B$. This domain potentially contributes
arrows of the form $XXXX*\to YYXX*$. The only possible choice of idempotents is
\begin{itemize}
\item $_{(12)}X_{(12)}X_{(12)}X_{(12)}Y_{(23)}X_{(23)}
  \to\,_{(12)}Y_{(13)}Y_{(23)}X_{(23)}X_{(23)}X_{(23)}$
\end{itemize}
Rather than computing the type $\DA$ arrows for this domain like in the previous
case, we note that the sequence
\[_{(12)}XXYXY_{(23)}\to\, _{(12)}YXXXY_{(23)}\to\, _{(12)}YYXXX_{(23)} \] must
cancel against something in the type $\DA$ structure equation. This is possible
only if the domain $A+B$ contributes an arrow.

In summary, the cancellable arrows are:
\begin{itemize}
\item $_{(1)}XXXXY_{(2)} \to\,_{(1)}XYXXX_{(2)}$
\item $_{(13)}XXXXY_{(23)} \to\,_{(13)}XYXXX_{(23)}$
\item $_{(2)}XXYXX_{(3)} \to\,_{(2)}YXXXX_{(3)}$
\item $_{(12)}XXYXX_{(13)} \to\,_{(12)}YXXXX_{(13)}$
\item $_{(1)}XXXYX_{(3)} \to\,_{(1)}XYYXX_{(3)}$
\end{itemize}
and

\begin{tikzpicture}
  \matrix (cancellation) [matrix of math nodes]
  {
    _{(12)}XXYXY_{(23)} & [1cm] _{(12)}YXXXY_{(23)} \\ [7mm]
    _{(12)}XXXYX_{(23)} & [1cm] _{(12)}YYXXX_{(23)} \\
  };
  \draw [->] (cancellation-1-1) -- (cancellation-1-2);
  \draw [->] (cancellation-1-1) -- (cancellation-2-1);
  \draw [->] (cancellation-1-2) -- (cancellation-2-2);
  \draw [->] (cancellation-2-1) -- (cancellation-2-2);
\end{tikzpicture}

The four types of generators starting with ``12'' and ending with ``23'' form
the square above, and is cancelled using either the horizontal or vertical
arrows. The other ten types of generators containing at least one $Y$ are
cancelled using the first five arrows. So only generators of type
$_{(*)}XXXXX_{(*)}$ remain, which verifies (ID-1).

This concludes the proof of Theorem \ref{thm:invmappingclass}, showing the
bimodule $\hatda(\phi,\bm{\tau})$ is independent of the choice of factorization
$\bm{\tau}$ up to homotopy equivalence. This allows us to write
$\widehatit{CFDA}(\phi)$ for the homotopy equivalence class of
$\hatda(\phi,\bm{\tau})$, and define the other invariants
$\widehatit{CFDD}(\phi), \widehatit{CFAA}(\phi)$, and $\widehatit{CFAD}(\phi)$
combinatorially by box tensoring with appropriate identity bimodules.

We finish with a discussion of how duality on $\hatdd(\tau)$ extends to the
other bimodule invariants.
\begin{lemma} \label{lem:daduality} For any element $\phi:F^\circ(\slz_1)\to
  F^\circ(\slz_2)$ of the strongly-based mapping class groupoid, we have
  \begin{equation} \label{eq:daduality}
    ^{\sla(-\slz_1)}\widehatit{CFAD}(\phi^{-1})_{\sla(-\slz_2)} \simeq
    \overline{^{\sla(\slz_1)}\widehatit{CFDA}(\phi)_{\sla(\slz_2)}}.
  \end{equation}
\end{lemma}
\begin{proof}
  First, we consider the case of an arcslide $\tau$. Using the definition of
  $\widehatit{CFAD}$ and the fact that $\widehatit{CFAA}(\mathbb{I})$ and
  $\widehatit{CFDD}(\mathbb{I})$ are quasi-inverses, we have:
  \[ ^{\sla(\slz_2),\sla(-\slz_1)}\widehatit{CFDD}(\tau^{-1}) \simeq
  \tensor[^{\sla(-\slz_1)}]{\widehatit{CFAD}(\tau^{-1})}{_{\sla(-\slz_2)}}
  \boxtimes
  \tensor[^{\sla(\slz_2),\sla(-\slz_2)}]{\widehatit{CFDD}(\mathbb{I}_{\slz_2})}{}.
  \]
  On the other hand,
  \begin{eqnarray}
    ^{\sla(-\slz_1),\sla(\slz_2)}\overline{\widehatit{CFDD}(\tau)} &\simeq&
    \overline{^{\sla(\slz_1)}\widehatit{CFDA}(\tau)_{\sla(\slz_2)}
      \boxtimes
      \tensor[^{\sla(\slz_2),\sla(-\slz_2)}]
      {\widehatit{CFDD}(\mathbb{I}_{\slz_2})}{}}
    \nonumber \\ &\simeq&
    \tensor[^{\sla(-\slz_1)}]{\overline{\widehatit{CFDA}(\tau)}}{_{\sla(-\slz_2)}}
    \boxtimes
    \tensor[^{\sla(-\slz_2),\sla(\slz_2)}]
    {\overline{\widehatit{CFDD}(\mathbb{I}_{\slz_2})}}{} \nonumber
  \end{eqnarray}
  By the remarks on duality at the end of Section \ref{sec:invarcslidesintro}
  (Equation (\ref{eq:cfddarcslidedual})), we see $\widehatit{CFDD}(\tau^{-1})$
  and $\overline{\widehatit{CFDD}(\tau)}$ are homotopy equivalent after
  switching the two algebra actions. It is also clear from the construction of
  $\widehatit{CFDD}(\mathbb{I}_{\slz_2})$ that it is isomorphic to
  $\overline{\widehatit{CFDD}(\mathbb{I}_{\slz_2})}$ after switching the algebra
  actions. This implies Equation (\ref{eq:daduality}) for arcslides $\tau$.

  For a general surface diffeomorphism $\phi$, factor it into arcslides
  $\tau_i$. The statement then follows from the case of arcslides, and the fact
  that taking duals distributes over the box tensor product.
\end{proof}

%%% Local Variables:
%%% mode: latex
%%% TeX-master: "dacalc"
%%% End:

\section{The 3-manifold invariant}\label{sec:3manifold}

In this section, we prove Theorem \ref{thm:invclosed}, showing that the homotopy
type of the chain complex $\widehatit{HF}$ given in Construction
\ref{constr:invclosed} does not depend on the choices made. There are two main
components of the proof, given by the two lemmas below.

Let $\mathrm{MCG}_0(\slz^g)$ denote the strongly-based mapping class group on
$F_{g,1}$, parametrized by the genus $g$ split pointed matched circle
$\slz^g$. Recall that $\hg$ denotes the 0-framed handlebody, and its orientation
reversal $-\hg$ is the $\infty$-framed handlebody.

\begin{lemma}[Stabilization]\label{lem:stabinv}
  Let $\psi$ be an element of $\mathrm{MCG}_0(\slz^g)$. Consider $F_{g+1,1}$,
  parametrized by $\slz^{g+1}$, as the surface obtained from $F_{g,1}$ by adding
  a handle in a neighborhood of the basepoint. Let $\mathring{\psi}$ be the
  element of $\mathrm{MCG}_0(\slz^{g+1})$ that fixes the new handle and acts as
  $\psi$ elsewhere. Then
  \begin{equation} \label{eq:stabinv}
    \left(\widehatit{CFAA}(\psi)\boxtimes \widehatit{CFD}(\hg) \right)
    \boxtimes \widehatit{CFD}(-\hg) \simeq
    \left(\widehatit{CFAA}(\mathring{\psi})\boxtimes
      \widehatit{CFD}(\mathbf{H}^{g+1}) \right)
    \boxtimes\widehatit{CFD}(-\mathbf{H}^{g+1}).
  \end{equation}
\end{lemma}

\begin{definition}
  Define $\mathrm{MCG}_0^{\beta}(\slz^g)$ to be the subgroup of
  $\mathrm{MCG}_0(\slz^g)$ consisting of maps that extend to automorphisms of
  $\hg$. Likewise, define $\mathrm{MCG}_0^{\alpha}(\slz^g)$ to be the subgroup
  of $\mathrm{MCG}_0(\slz^g)$ consisting of maps that extend to automorphisms of
  $-\hg$ (using identification $\slz^g=-\slz^g$ to consider $-\hg$ as
  parametrized by $\slz^g$).
\end{definition}

\begin{lemma}[Reparametrization of the 0-framed handlebody]
  \label{lem:handlebodyinv}
  For each element $\phi\in\mathrm{MCG}_0^\beta(\slz^g)$, we have
  \begin{equation} \label{eq:reparametrization}
    ^{\sla(-\slz^g)}\widehatit{CFAD}(\phi)_{\sla(-\slz^g)} \boxtimes
    \tensor[^{\sla(-\slz^g)}]{\widehatit{CFD}(\hg)}{} \simeq
    \tensor[^{\sla(-\slz^g)}]{\widehatit{CFD}(\hg)}{}.
  \end{equation}
\end{lemma}

We first show that these two lemmas imply Theorem \ref{thm:invclosed}.

\begin{proof}[Proof of Theorem \ref{thm:invclosed}]
  There are two choices made in Construction \ref{constr:invclosed}: the choice
  of Heegaard splitting $Y=Y_1\cup Y_2$, and choice of parametrizations of $Y_1$
  and $Y_2$ by standard handlebodies. It is well-known that any two Heegaard
  splittings become isotopic after a finite number of stabilizations. Also, any
  stabilization can be isotopied to the standard one, adding a handle in a
  neighborhood of the basepoint. If $\psi$ is a valid choice of element in
  $\mathrm{MCG}_0(\slz^g)$ in the second stage of the construction, then
  $\mathring{\psi}$ is a valid choice of element in $\mathrm{MCG}_0(\slz^{g+1})$
  after a standard stabilization. So Lemma \ref{lem:stabinv} implies that
  Construction \ref{constr:invclosed} is invariant under stabilizations.

  Now we consider choice of parametrizations of $Y_1$ and $Y_2$. Recall
  $\psi=\overline{f_{2*}}^{-1}\circ u\circ f_{1*}$, where $u:\partial Y_1\to
  -\partial Y_2$ is the gluing map, $f_1:\hg\to Y_1$ is the parametrization of
  $Y_1$ by $\hg$, and $f_2:-\hg\to Y_2$ is the parametrization of $Y_2$ by
  $-\hg$. Hence, changing parametrization of $Y_1$ changes $\psi$ to
  $\psi'=\psi\circ\phi_1$, where $\phi_1\in\mathrm{MCG}_0^\beta(\slz^g)$, and
  changing parametrization of $Y_2$ changes $\psi$ to
  $\psi'=\overline{\phi_2}^{-1}\circ\psi$, where
  $\phi_2\in\mathrm{MCG}_0^\alpha(\slz^g)$.

  It remains to show the following:
  \begin{align}
    \widehatit{CFAA}(\psi\circ\phi_1)\boxtimes\widehatit{CFD}(\hg) &\simeq
    \widehatit{CFAA}(\psi)\boxtimes\widehatit{CFD}(\hg), \nonumber \\
    \widehatit{CFAA}(\overline{\phi_2}^{-1}\circ\psi)\boxtimes
    \widehatit{CFD}(-\hg) &\simeq
    \widehatit{CFAA}(\psi)\boxtimes\widehatit{CFD}(-\hg) \nonumber
  \end{align}
  for $\phi_1\in\mathrm{MCG}_0^\beta(\slz^g)$ and
  $\phi_2\in\mathrm{MCG}_0^\alpha(\slz^g)$.

  The first equation follows directly from Lemma \ref{lem:handlebodyinv}:
  \begin{align}
    \widehatit{CFAA}(\psi\circ\phi_1)\boxtimes\widehatit{CFD}(\hg)
    &\simeq \widehatit{CFAA}(\psi)\boxtimes
    \widehatit{CFAD}(\phi_1)\boxtimes \widehatit{CFD}(\hg) \nonumber \\
    &\simeq \widehatit{CFAA}(\psi)\boxtimes \widehatit{CFD}(\hg). \nonumber
  \end{align}

  For the second equation, by taking the dual of Equation
  \ref{eq:reparametrization}, and using Lemma \ref{lem:daduality}, we get
  \begin{equation} \label{eq:reparametrization2}
    ^{\sla(\slz^g)}\widehatit{CFDA}(\phi^{-1})_{\sla(\slz^g)} \boxtimes
    \tensor[^{\sla(\slz^g)}]{\widehatit{CFD}(-\hg)}{} \simeq
    \tensor[^{\sla(\slz^g)}]{\widehatit{CFD}(-\hg)}{}
  \end{equation}
  for any $\phi\in\mathrm{MCG}_0^\beta(\slz^g)$. Then the second equation
  follows as:
  \begin{align}
    \widehatit{CFAA}(\overline{\phi_2}^{-1}\circ\psi)\boxtimes
    \widehatit{CFD}(-\hg) &\simeq
    \widehatit{CFAA}(\psi)\boxtimes\widehatit{CFDA}(\overline{\phi_2}^{-1})
    \boxtimes \widehatit{CFD}(-\hg) \nonumber \\
    &\simeq \widehatit{CFAA}(\psi)\boxtimes \widehatit{CFD}(-\hg). \nonumber
  \end{align}
  since $\phi_2\in\mathrm{MCG}_0^\alpha(\slz^g)$ implies
  $\overline{\phi_2}\in\mathrm{MCG}_0^\beta(\slz^g)$.
\end{proof}

We now prove the two lemmas, starting with stabilization invariance.

\begin{proof}[Proof of Lemma \ref{lem:stabinv}]
  Choose factorization $\bm{\tau}$ for $\psi$, then $\mathring{\bm{\tau}}$ is a
  factorization for $\mathring{\psi}$. Choose $\hatda(\psi,\bm{\tau})$ and
  $\hatda(\mathring{\psi},\mathring{\bm{\tau}})$ as models for the
  $\widehatit{CFDA}$ invariants behind the $\widehatit{CFAA}$ invariants. The
  lemma then follows from the stabilization property for
  $\hatda(\psi,\bm{\tau})$. We can see this by comparing the Heegaard diagrams
  underlying the two sides of Equation (\ref{eq:stabinv}). First, the Heegaard
  diagram for $\hatda(\mathring{\psi},\mathring{\bm{\tau}})$ is constructed from
  that for $\hatda(\psi,\bm{\tau})$ by adjoining a horizontal ``strip'' of
  diagrams for the identity diffeomorphism of the genus 1 surface at the
  top. Likewise, the Heegaard diagrams of $\mathbf{H}^{g+1}$ and
  $-\mathbf{H}^{g+1}$ are obtained from that of $\hg$ and $-\hg$ by adjoining
  diagrams of $\mathbf{H}^1$ and $-\mathbf{H}^1$ to the top. These constructions
  are combined in Figure \ref{fig:stabinv}.

  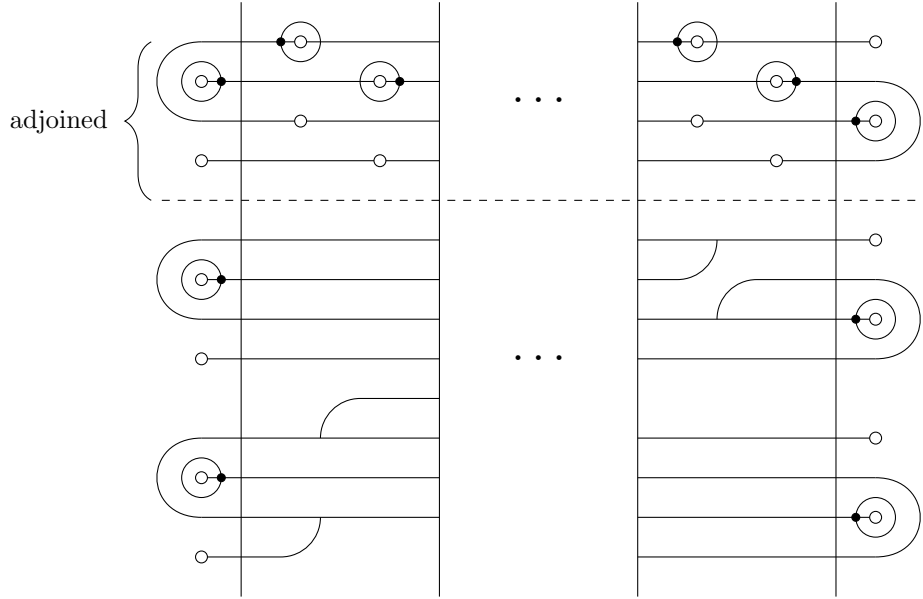
\begin{figure}
    \begin{tikzpicture} [x=15pt,y=15pt]
      \draw (0,0) to (0,15); \draw (5,0) to (5,15); \draw (10,0) to (10,15); \draw (15,0) to (15,15);
      \draw[dashed] (-2,10) to (17,10);
      % Left adjoined portion
      \draw (-1,11) to (5,11); \draw (-1,12) to (5,12); \draw (-1,13) to (5,13); \draw (-1,14) to (5,14);
      \filldraw[fill=white] (1.5,12) circle (0.15);
      \filldraw[fill=white] (1.5,14) circle (0.15);
      \filldraw[fill=white] (3.5,11) circle (0.15);
      \filldraw[fill=white] (3.5,13) circle (0.15);
      \draw (1.5,14) circle (0.5);
      \draw (3.5,13) circle (0.5);
      \filldraw[fill=black] (1,14) circle (0.1);
      \filldraw[fill=black] (4,13) circle (0.1);
      % Right adjoined portion
      \draw (10,11) to (16,11); \draw (10,12) to (16,12); \draw (10,13) to (16,13); \draw (10,14) to (16,14);
      \filldraw[fill=white] (11.5,12) circle (0.15);
      \filldraw[fill=white] (11.5,14) circle (0.15);
      \filldraw[fill=white] (13.5,11) circle (0.15);
      \filldraw[fill=white] (13.5,13) circle (0.15);
      \draw (11.5,14) circle (0.5);
      \draw (13.5,13) circle (0.5);
      \filldraw[fill=black] (11,14) circle (0.1);
      \filldraw[fill=black] (14,13) circle (0.1);
      % Left below
      \draw[rounded corners=15] (-1,1) to (2,1) to (2,2);
      \draw (-1,2) to (5,2); \draw (-1,3) to (5,3); \draw (-1,4) to (5,4);
      \draw[rounded corners=15] (2,4) to (2,5) to (5,5);
      \draw (-1,6) to (5,6); \draw (-1,7) to (5,7); \draw (-1,8) to (5,8); \draw (-1,9) to (5,9);
      % Left type D
      \filldraw[fill=white] (-1,11) circle (0.15);
      \filldraw[fill=white] (-1,13) circle (0.15);
      \draw (-1,13) circle (0.5);
      \filldraw[fill=black] (-0.5,13) circle (0.1);
      \draw (-1,14) .. controls (-2.5,14) and (-2.5,12) .. (-1,12);
      \filldraw[fill=white] (-1,6) circle (0.15);
      \filldraw[fill=white] (-1,8) circle (0.15);
      \draw (-1,8) circle (0.5);
      \filldraw[fill=black] (-0.5,8) circle (0.1);
      \draw (-1,9) .. controls (-2.5,9) and (-2.5,7) .. (-1,7);
      \filldraw[fill=white] (-1,1) circle (0.15);
      \filldraw[fill=white] (-1,3) circle (0.15);
      \draw (-1,3) circle (0.5);
      \filldraw[fill=black] (-0.5,3) circle (0.1);
      \draw (-1,4) .. controls (-2.5,4) and (-2.5,2) .. (-1,2);
      % Right below
      \draw (10,1) to (16,1); \draw (10,2) to (16,2); \draw (10,3) to (16,3); \draw (10,4) to (16,4);
      \draw (10,6) to (16,6); \draw (10,7) to (16,7); \draw (10,9) to (16,9);
      \draw [rounded corners=15] (10,8) to (12,8) to (12,9);
      \draw [rounded corners=15] (12,7) to (12,8) to (16,8);
      % Right type D
      \filldraw[fill=white] (16,12) circle (0.15);
      \filldraw[fill=white] (16,14) circle (0.15);
      \draw (16,12) circle (0.5);
      \filldraw[fill=black] (15.5,12) circle (0.1);
      \draw (16,13) .. controls (17.5,13) and (17.5,11) .. (16,11);
      \filldraw[fill=white] (16,7) circle (0.15);
      \filldraw[fill=white] (16,9) circle (0.15);
      \draw (16,7) circle (0.5);
      \filldraw[fill=black] (15.5,7) circle (0.1);
      \draw (16,8) .. controls (17.5,8) and (17.5,6) .. (16,6);
      \filldraw[fill=white] (16,2) circle (0.15);
      \filldraw[fill=white] (16,4) circle (0.15);
      \draw (16,2) circle (0.5);
      \filldraw[fill=black] (15.5,2) circle (0.1);
      \draw (16,3) .. controls (17.5,3) and (17.5,1) .. (16,1);
      % Dots in the middle
      \draw (7.5,12.5) node {\larger[3]$\mathbf{\cdots}$};
      \draw (7.5,6) node {\larger[3]$\mathbf{\cdots}$};
      % Comment at right
      \draw [decorate,decoration={brace,amplitude=10pt},xshift=-4pt,yshift=0pt]
      (-2,10) -- (-2,14) node [black,midway,xshift=-35]{adjoined};
    \end{tikzpicture}
    \caption{Proof of invariance under stabilization.}
    \label{fig:stabinv}
  \end{figure}

  By Remark \ref{rem:dacorrespondence}, generators in the chain complex
  \begin{equation}\label{eq:stabinvcx1}
    \left( \widehatit{CFAA}(\psi) \boxtimes \widehatit{CFD}(\hg) \right)
    \boxtimes \widehatit{CFD}(-\hg)
  \end{equation}
  correspond to certain tuples of intersection points in the part of the diagram
  below the dashed line in Figure \ref{fig:stabinv}, while generators in the
  chain complex
  \begin{equation}\label{eq:stabinvcx2}
    \left( \widehatit{CFAA}(\mathring{\psi}) \boxtimes
      \widehatit{CFD}(\mathbf{H}^{g+1}) \right)
    \boxtimes \widehatit{CFD}(-\mathbf{H}^{g+1})
  \end{equation}
  correspond to certain tuples of intersection points in the full
  diagram. Likewise, there is a correspondence between arrows in the type $\DA$
  action on the two sides, and domains in appropriate parts of the diagram.

  The choice of intersection points in the adjoined portion of the diagram is
  forced (as marked in the figure), which means that it is the same for all
  generators in (\ref{eq:stabinvcx2}). So there is a one-to-one correspondence
  between generators in (\ref{eq:stabinvcx1}) and
  (\ref{eq:stabinvcx2}). Moreover, since there are no closed domains above the
  dashed line, all arrows in (\ref{eq:stabinvcx2}) automatically have domains
  restricted below the dashed line. By Remark \ref{rem:dastabilization}, there
  is a one-to-one correspondence between these arrows and the arrows in
  (\ref{eq:stabinvcx1}). This shows the chain complexes (\ref{eq:stabinvcx1})
  and (\ref{eq:stabinvcx2}) are isomorphic, proving Lemma \ref{lem:stabinv}.
\end{proof}

For Lemma \ref{lem:handlebodyinv}, we need to show
\[ \widehatit{CFAD}(\phi)\boxtimes\widehatit{CFD}(\hg) \simeq
\widehatit{CFD}(\hg), \] for any $\phi\in\mathrm{MCG}_0^\beta(\slz_g)$. It
suffices to verify the equation for a set of generators of
$\mathrm{MCG}_0^\beta(\slz_g)$.

We find generators for the strongly-based mapping class group by appealing to
results on the usual mapping class group. Let $F_g$ be the genus $g$ surface
with a basepoint. Let $\mathrm{MCG}(F_g)$ be the group of isotopy classes of
diffeomorphisms on $F_g$ that fixes the basepoint, with isotopies also required
to fix the basepoint. It is related to $\mathrm{MCG}_0(\slz_g)$ by a short exact
sequence (see \cite[Section 4.2.5]{FM}):

\begin{center}
  \begin{tikzpicture}[baseline=(current bounding box.center)]
    \matrix(m)[matrix of math nodes,column sep=15]
    {0 & \mathbb{Z} & \mathrm{MCG}_0(\slz_g) & \mathrm{MCG}(F_g) & 0 \\};
    \path[->]
    (m-1-1) edge (m-1-2)
    (m-1-2) edge node[above] {$\tau_\partial$} (m-1-3)
    (m-1-3) edge (m-1-4)
    (m-1-4) edge (m-1-5);
  \end{tikzpicture}
\end{center}

Here $\tau_\partial$ maps the generator of $\mathbb{Z}$ to the \emph{boundary
  Dehn twist} in $\mathrm{MCG}_0(\slz_g)$. This is the element that performs a
Dehn twist along a loop parallel to the boundary of $F_{g,1}$.

There is likewise a short exact sequence:

\begin{center}
  \begin{tikzpicture}[baseline=(current bounding box.center)]
    \matrix(m)[matrix of math nodes,column sep=15]
    {0 & \mathbb{Z} & \mathrm{MCG}_0^\beta(\slz_g) & \mathrm{MCG}^\beta(F_g) & 0 \\};
    \path[->]
    (m-1-1) edge (m-1-2)
    (m-1-2) edge node[above] {$\tau_\partial$} (m-1-3)
    (m-1-3) edge (m-1-4)
    (m-1-4) edge (m-1-5);
  \end{tikzpicture}
\end{center}
where $\mathrm{MCG}^\beta(F_g)$ is the subgroup of $\mathrm{MCG}(F_g)$
consisting of restrictions of automorphisms of the 0-framed handlebody
$\hg$. This exact sequence shows that a generating set of
$\mathrm{MCG}_0^\beta(\slz_g)$ can be obtained by adding the boundary Dehn twist
to the lifting of a generating set of $\mathrm{MCG}^\beta(F_g)$.

A generating set of $\mathrm{MCG}^\beta(F_g)$ is given in \cite{Suzuki} (the
corresponding notation in that paper is $\mathrm{MCG}^*(F_g)$). We reproduce the
list of generators, together with the action of each generator on $\pi_1(F_g)$
here. For an element $\psi\in\mathrm{MCG}^\beta(F_g)$, let
$\psi_\sharp:\pi_1(F_g)\to\pi_1(F_g)$ be its action on $\pi_1(F_g)$. We let
$a_1,b_1,\dots,a_g,b_g$ be a set of standard generators of $\pi_1(F_g)$, with
each $b_i$ contractible in the handlebody, and each $a_i$ intersecting $b_i$
once. Let $s_i=a_i^{-1}b_i^{-1}a_ib_i$, so that $s_n\dots s_2s_1=1$ is a
relation in $\pi_1(F_g)$. In \cite{Suzuki}, a genus $g$ surface is considered as
a sphere with $g$ handles attached. Each handle, together with its immediate
base, is called a \emph{knob}. We refer to that paper for diagrams and geometric
description of these generators.

\begin{theorem}[Suzuki, \cite{Suzuki}]\label{thm:gensuzuki}
  The group $\mathrm{MCG}^\beta(F_g)$ is generated by $\rho, \omega_1, \tau_1,
  \rho_{12}, \theta_{12}$ and $\xi_{12}$, whose actions on $\pi_1(F_g)$ are the
  following:
  \begin{itemize}
  \item Cyclic translation of handles: $\rho_\sharp: a_i\to a_{i+1},~b_i\to
    b_{i+1}$, where indices are taken modulo $g$.
  \item Twisting a knob: $\omega_{1\sharp}: a_1\to a_1^{-1}s_1^{-1},~b_1\to
    a_1^{-1}b_1^{-1}a_1,~ a_j\to a_j,~b_j\to b_j$ for $2\le j\le n$.
  \item Twisting a handle, or Dehn twist: $\tau_{1\sharp}:a_1\to
    a_1b_1^{-1},~b_1\to b_1,~a_j\to a_j,~ b_j\to b_j$ for $2\le j\le n$.
  \item Interchanging two knobs: $\rho_{12\sharp}: a_1\to s_1^{-1}a_2s_1,~a_2\to
    a_1,~b_1\to s_1^{-1}b_2s_1,~ b_2\to b_1,~a_j\to a_j,~b_j\to b_j$ for $3\le
    j\le n$.
  \item Sliding along $a_2$: $\theta_{12\sharp}: a_1\to
    a_1(b_2^{-1}a_2^{-1}b_2),~a_j\to a_j$ for $j\neq 1$, $b_2\to
    a_2b_2(a_1^{-1}b_1a_1)(b_2^{-1}a_2^{-1}b_2),~b_j\to b_j$ for $j\neq 2$.
  \item Sliding along $b_2$: $\xi_{12\sharp}: a_1\to
    b_1a_1b_2^{-1}s_2(a_1^{-1}b_1^{-1}a_1),~ a_2\to
    a_2b_2(a_1^{-1}b_1^{-1}a_1)b_2^{-1}$, $a_j\to a_j$ for $j\neq 1,2,~b_i\to
    b_i$ for $1\le i\le g$.
  \end{itemize}
\end{theorem}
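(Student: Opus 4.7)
The plan is to characterize $\mathrm{MCG}^\beta(F_g)$ algebraically and then show that this algebraic subgroup is generated by the six listed elements. By Dehn--Nielsen--Baer, $\mathrm{MCG}(F_g)$ embeds into $\mathrm{Out}(\pi_1(F_g))$, and a mapping class $\psi$ extends over the handlebody $\hg$ if and only if $\psi_\sharp$ preserves the kernel $K$ of the inclusion-induced map $\pi_1(F_g)\to\pi_1(\hg)$. This kernel $K$ is the normal closure $\langle\langle b_1,\dots,b_g\rangle\rangle$ inside $\pi_1(F_g)$, so the question reduces to identifying which outer automorphisms preserve $K$ and showing they are generated by the six elements.

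First I would verify that each of $\rho$, $\omega_1$, $\tau_1$, $\rho_{12}$, $\theta_{12}$, $\xi_{12}$ preserves $K$. This is immediate from the displayed action formulas: in each case $b_i$ is sent to a word lying in $\langle\langle b_1,\dots,b_g\rangle\rangle$, so all proposed generators genuinely belong to $\mathrm{MCG}^\beta(F_g)$. Next, for an arbitrary $\psi\in\mathrm{MCG}^\beta(F_g)$, I would proceed by induction on the genus $g$, combined with an inner induction on a complexity measure of the tuple of (cyclically) reduced words $(\psi_\sharp(b_1),\dots,\psi_\sharp(b_g))$. The translation $\rho$ and the interchange $\rho_{12}$ allow one to permute knobs so that the most complicated meridian word is brought to the first knob. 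The knob twist $\omega_1$ and handle twist $\tau_1$, combined with the slides $\theta_{12}$ and $\xi_{12}$, then provide enough Nielsen-like moves on the first knob's meridian to strictly decrease complexity. Iterating reduces $\psi_\sharp$ to an outer automorphism fixing each $b_i$; a final check, using that $\psi_\sharp$ must also preserve the surface relation $s_g\cdots s_1=1$, forces the residual automorphism to be trivial in $\mathrm{Out}(\pi_1(F_g))$, closing the induction.

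The main obstacle is the reduction step itself: establishing that at every non-minimal configuration of $\psi_\sharp$, some product of the listed generators strictly decreases the chosen complexity. Geometrically this corresponds to the classical fact that any complete system of meridian disks in $\hg$ can be transformed into the standard system via a prescribed list of handle moves, which is the technical heart of Suzuki's argument in \cite{Suzuki}. The delicate part is designing the complexity measure so that the slides ($\theta_{12}$, $\xi_{12}$) and the knob twist ($\omega_1$) each have a regime in which they produce strict decreases, and then organizing the case analysis to cover all possible non-reduced configurations; this is where the bulk of the work lies and why a purely algorithmic statement of the reduction is cumbersome enough to justify citing \cite{Suzuki} for the full details.
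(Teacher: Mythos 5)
The first thing to say is that the paper itself contains no proof of this statement: Theorem \ref{thm:gensuzuki} is imported wholesale from \cite{Suzuki}, and the surrounding text only records the generators and their $\pi_1$-actions for later use. So there is no in-paper argument to compare against, and a citation-level justification is all the paper asks for. Judged as a standalone proof, however, your proposal has a genuine gap. The correct parts are the framing and the easy inclusion: identifying $\mathrm{MCG}^\beta(F_g)$ with the mapping classes whose $\pi_1$-action preserves $K=\langle\langle b_1,\dots,b_g\rangle\rangle$ (the kernel of $\pi_1(F_g)\to\pi_1(\hg)$) is the standard characterization of the handlebody group, and verifying that the six listed elements preserve $K$ is indeed immediate from the formulas. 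But the converse --- that every $K$-preserving mapping class is a product of these six --- is exactly the content of the theorem, and in your outline it rests on an unspecified complexity measure together with the bare assertion that some product of the generators always strictly decreases it. You acknowledge this is ``the technical heart of Suzuki's argument,'' and you defer it to \cite{Suzuki}; at that point the proposal is an outline of a proof strategy rather than a proof.

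Moreover, the endgame of the outline is wrong as stated. You claim that once $\psi_\sharp$ fixes each $b_i$, preservation of the relation $s_g\cdots s_1=1$ forces the residual automorphism to be trivial in $\mathrm{Out}(\pi_1(F_g))$. That is false: the stabilizer of the standard meridian system is a large, nontrivial subgroup. Already $\tau_{1\sharp}$ fixes every $b_i$ while sending $a_1\mapsto a_1b_1^{-1}$, and the same is true of twists along any simple closed curve bounding a disk in $\hg$ (e.g.\ separating curves in the kernel), none of which are trivial mapping classes. So a complexity induction keyed only to the words $\psi_\sharp(b_1),\dots,\psi_\sharp(b_g)$ cannot terminate in the identity; it terminates in this ``twist group,'' and showing that the twist group is itself generated by (conjugates of) the listed elements is a substantial separate step --- essentially the Luft/Suzuki analysis --- which your argument neither performs nor reduces to anything simpler. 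Since the paper only needs the statement as a quoted input, citing \cite{Suzuki} as the paper does is the appropriate move; but if you want an actual proof, both the reduction step and the identification of the meridian-system stabilizer must be supplied.
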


Of these, only $\rho$ is non-local in the sense that it is not restricted to a
part of the surface with fixed genus. All other generators are restricted to a
genus 1 or 2 part of the surface. We can remove $\rho$ in favor of other local
generators, by writing:
\[ \rho^{-1} =
\rho_{12}\circ\rho_{23}\circ\cdots\circ\rho_{g-1,g}\circ(\omega_{g\sharp})^{-2}, \]
where $\omega_{g\sharp}$ is similar to $\omega_{1\sharp}$, except acting on the
$g$th handle, and $\rho_{i,i+1}$ interchanges the $i$th and $(i+1)$th knobs.
The equation can be verified by comparing the actions of two sides on
$\pi_1(F_g)$: the initial $(\omega_{g\sharp})^{-2}$ has the effect of
conjugating $a_g$ and $b_g$ by $s_g^{-1}$. After interchanging the knobs in
succession, the action of the right side on $\pi_1(F_g)$ is $a_1\to
s_1^{-1}s_2^{-1}\cdots s_g^{-1}a_gs_g\cdots s_2s_1$, $a_2\to a_1$, $a_3\to a_2$,
and so on, and similarly for the $b_i$'s. We then apply the relation $s_g\cdots
s_2s_1=1$.

Since the boundary Dehn twist equals $\rho^g$, the $g$th power of the cyclic
translation of handles, the same generators also generate the group
$\mathrm{MCG}_0^\beta(\slz_g)$.

So we have proved the following:
\begin{corollary} \label{cor:genhandlebody} The group
  $\mathrm{MCG}_0^\beta(F_g)$ is generated by $\omega_1, \omega_g, \tau_1,
  \theta_{12}, \xi_{12}$ and $\rho_{i,i+1}$ for $1\le i\le g-1$.
\end{corollary}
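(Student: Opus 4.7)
The plan is to combine Suzuki's generating theorem (Theorem \ref{thm:gensuzuki}) with the exact sequence
\begin{equation*}
0 \to \mathbb{Z} \xrightarrow{\tau_\partial} \mathrm{MCG}_0^\beta(\slz^g) \to \mathrm{MCG}^\beta(F_g) \to 0,
\end{equation*}
while eliminating the non-local generator $\rho$ in favor of the local ones already on the proposed list. By Theorem \ref{thm:gensuzuki}, $\mathrm{MCG}^\beta(F_g)$ is generated by $\{\rho, \omega_1, \tau_1, \rho_{12}, \theta_{12}, \xi_{12}\}$, so it suffices to write $\rho$ as a word in $\omega_g$ and the $\rho_{i,i+1}$ for $1 \le i \le g-1$.

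The central step is to verify the identity
\begin{equation*}
\rho^{-1} \;=\; \rho_{12} \circ \rho_{23} \circ \cdots \circ \rho_{g-1,g} \circ (\omega_g)^{-2},
\end{equation*}
where $\omega_g$ is the analog of $\omega_1$ supported on the $g$-th knob. I would verify this by comparing the induced actions on $\pi_1(F_g)$ using the formulas in Theorem \ref{thm:gensuzuki}. The factor $(\omega_g)^{-2}$ conjugates $a_g, b_g$ by $s_g^{-1}$ while fixing the other $a_j, b_j$. Successively applying $\rho_{g-1,g}, \dots, \rho_{12}$ cyclically shifts the knobs, transporting the conjugating factor forward so that the net action on $\pi_1(F_g)$ is $a_1 \mapsto s_1^{-1} s_2^{-1} \cdots s_g^{-1} a_g s_g \cdots s_2 s_1$ (and analogously for $b_1$), with $a_{i+1} \mapsto a_i$ for $i < g$. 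Applying the surface relation $s_g s_{g-1} \cdots s_1 = 1$, the conjugating word collapses and the action matches $\rho^{-1}$ exactly. Since $\mathrm{MCG}(F_g)$ injects into $\mathrm{Aut}(\pi_1(F_g))/\mathrm{inn}$ for $g \ge 1$ (the classical Dehn--Nielsen--Baer theorem, suitably adapted), equality of actions on $\pi_1$ implies equality in $\mathrm{MCG}^\beta(F_g)$.

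With this identity in hand, $\mathrm{MCG}^\beta(F_g)$ is generated by $\omega_1, \omega_g, \tau_1, \theta_{12}, \xi_{12}$ and $\rho_{i,i+1}$ for $1 \le i \le g-1$. Lifting to $\mathrm{MCG}_0^\beta(\slz^g)$ via the exact sequence, a generating set is obtained by adjoining the boundary Dehn twist $\tau_\partial$. But $\tau_\partial = \rho^g$ in $\mathrm{MCG}_0(\slz^g)$, so $\tau_\partial$ is already a word in the listed local generators, and no additional generator is needed.

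I expect the main obstacle to be the $\pi_1$-computation. It is elementary but delicate: one has to track how the conjugating factor $s_g^{-1}$ is rewritten at each application of $\rho_{i,i+1}$ (which itself conjugates by $s_1^{-1}$ at one step), and then carefully telescope the product of conjugates so that the relation $s_g \cdots s_1 = 1$ can be applied cleanly. Everything else in the proof is bookkeeping around the exact sequence.
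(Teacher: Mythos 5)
Your proposal is correct and takes essentially the same route as the paper: Suzuki's generating set, the exact sequence $0\to\mathbb{Z}\xrightarrow{\tau_\partial}\mathrm{MCG}_0^\beta(\slz^g)\to\mathrm{MCG}^\beta(F_g)\to 0$, the identity $\rho^{-1}=\rho_{12}\circ\rho_{23}\circ\cdots\circ\rho_{g-1,g}\circ\omega_g^{-2}$ verified by comparing actions on $\pi_1(F_g)$ and telescoping with $s_g\cdots s_2s_1=1$, and finally $\tau_\partial=\rho^g$ to absorb the boundary Dehn twist. One small correction that does not affect the argument: for the basepoint-fixing group the justification should be injectivity of $\mathrm{MCG}(F_g)\to\mathrm{Aut}(\pi_1(F_g))$ rather than into $\mathrm{Aut}(\pi_1(F_g))/\mathrm{inn}$ (the latter fails for $g\ge 2$ since point-pushing classes act by inner automorphisms), but as you compare honest automorphisms this suffices.
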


Each of the generators in Corollary \ref{cor:genhandlebody} is confined to one
or two knobs on the surface. Our strategy will be to check Equation
(\ref{eq:reparametrization}) on a surface of the corresponding genus (1 or 2),
then extend the result to the general case. First, we compute a decomposition of
these generators into arcslides. An arcslide with $B$ pair $(b_1,b_2)$ and $C$
pair $(c_1,c_2)$, with $b_1$ sliding over $c_1$, is denoted $b_1\to c_1$. The
points are always labeled 0 to $4g-1$ from left to right. The results are:
\begin{eqnarray}
  \rho_{12}:&& 3\to 4~~6\to 7~~5\to 6~~4\to 5~~2\to 3~~5\to 6~~4\to 5~~3\to 4 
  \nonumber \\
  && 1\to 2~~4\to 5~~3\to 4~~2\to 3~~0\to 1~~3\to 4~~2\to 3~~1\to 2
  \nonumber \\
  \theta_{12}:&& 4\to 3~~1\to 0~~1\to 2~~5\to 4~~6\to 5 \nonumber \\
  \xi_{12}:&& 0\to 1~~3\to 4~~6\to 7~~6\to 5~~2\to 3~~1\to 2~~3\to 2
  \nonumber \\
  \omega_1:&& 2\to 3~~1\to 2~~2\to 3~~1\to 2~~2\to 3~~1\to 2 \nonumber \\
  \tau_1:&& 2\to 3 \nonumber
\end{eqnarray}
To verify these decompositions, we compute their actions on
$\pi_1(F^\circ(\slz^g))$. For any pointed matched circle $\slz$, recall that the
surface with circle boundary $F^\circ(\slz)$ is formed by attaching 1-handles to
$Z$ along the matched pairs of points in $\mathbf{a}\subset Z$, then gluing in a
solid disk on the other side. Choosing $z\in Z$ as the basepoint, the
fundamental group of $F^\circ(\slz)$ is generated freely by paths through the
1-handles. We choose the following orientation for the generators of the
fundamental group. For the genus 1 cases:
\[ \omega_1:
\begin{tikzpicture} [x=30pt,y=30pt,baseline=(current bounding box.center)]
  \draw (0.5,0) to (4.5,0);
  \draw (1,0) node[below] {$0$};\draw (2,0) node[below] {$1$};
  \draw (3,0) node[below] {$2$};\draw (4,0) node[below] {$3$};
  \draw (1,0) to [out=90,in=90] node {\midarrow} node [above] {$a_1^{-1}$} (3,0);
  \draw (2,0) to [out=90,in=90] node {\midarrow} node [above] {$b_1$} (4,0);
\end{tikzpicture}
\quad
\tau_1:
\begin{tikzpicture} [x=30pt,y=30pt,baseline=(current bounding box.center)]
  \draw (0.5,0) to (4.5,0);
  \draw (1,0) node[below] {$0$};\draw (2,0) node[below] {$1$};
  \draw (3,0) node[below] {$2$};\draw (4,0) node[below] {$3$};
  \draw (1,0) to [out=90,in=90] node {\midarrow} node [above] {$a_1$} (3,0);
  \draw (2,0) to [out=90,in=90] node {\midarrow} node [above] {$b_1$} (4,0);
\end{tikzpicture},
\]
and for all genus 2 cases:
\[\begin{tikzpicture} [x=30pt,y=30pt,baseline=(current bounding box.center)]
  \draw (0.5,0) to (8.5,0);
  \draw (1,0) node[below] {$0$};\draw (2,0) node[below] {$1$};
  \draw (3,0) node[below] {$2$};\draw (4,0) node[below] {$3$};
  \draw (5,0) node[below] {$4$};\draw (6,0) node[below] {$5$};
  \draw (7,0) node[below] {$6$};\draw (8,0) node[below] {$7$};
  \draw (1,0) to [out=90,in=90] node {\midarrow} node [above] {$a_2^{-1}$} (3,0);
  \draw (2,0) to [out=90,in=90] node {\midarrow} node [above] {$b_2$} (4,0);
  \draw (5,0) to [out=90,in=90] node {\midarrow} node [above] {$a_1^{-1}$} (7,0);
  \draw (6,0) to [out=90,in=90] node {\midarrow} node [above] {$b_1$} (8,0);
\end{tikzpicture}.
\]

An arcslide $\tau:\slz_1\to\slz_2$ induces an action
\[ \tau_*:\pi_1(F^\circ(\slz_1)) \to \pi_1(F^\circ(\slz_2)) \] on the
fundamental groups. We describe this action by expressing each generator of
$\pi_1(F^\circ(\slz_2))$ (corresponding to a pair of matched points in $\slz_2$)
in terms of the images under $\tau_*$ of generators of
$\pi_1(F^\circ(\slz_1))$. This can be computed from the definition of arcslides
(for example, see Figure 3 in \cite{LOT10c}). The results are shown in Figures
\ref{fig:actionunderslides} and \ref{fig:actionoverslides}. For example, the
first diagram means that if the two displayed handles in the starting pointed
matched circle correspond to generators $a_1$ and $b_1$, then the two displayed
handles in the ending pointed matched circle correspond to $\tau_*(a_1b_1)$ and
$\tau_*(b_1)$ (the relation for handles unaffected by the arcslide is clear).

\begin{figure}
  \[
  \begin{tikzpicture} [x=13pt,y=13pt,baseline=(current bounding box.center)]
    \draw (0,0) to [out=90,in=90] node {\midarrow} node [above] {$a_1$} (4,0);
    \draw (3,0) to [out=90,in=90] node {\midarrow} node [above] {$b_1$} (7,0);
    \draw (-0.5,0) to (0.5,0); \draw (2.5,0) to (4.5,0);
    \draw (6.5,0) to (7.5,0); \draw [white] (0,3) to (7,3);
    \draw (1.5,0) node {$\cdots$}; \draw (5.5,0) node {$\cdots$};
    \draw [->] (4,-0.5) to (3,-0.5);
  \end{tikzpicture}
  \Rightarrow
  \begin{tikzpicture} [x=13pt,y=13pt,baseline=(current bounding box.center)]
    \draw (0,0) to [out=90,in=90] node {\midarrow} node [above] {$a_1b_1$} (6,0);
    \draw (3,0) to [out=90,in=90] node {\midarrow} node [above] {$b_1$} (7,0);
    \draw (-0.5,0) to (0.5,0); \draw (2.5,0) to (3.5,0);
    \draw (5.5,0) to (7.5,0); \draw [white] (0,3) to (7,3);
    \draw (1.5,0) node {$\cdots$}; \draw (4.5,0) node {$\cdots$};
  \end{tikzpicture}
  \]
  \[
  \begin{tikzpicture} [x=13pt,y=13pt,baseline=(current bounding box.center)]
    \draw (0,0) to [out=90,in=90] node {\midarrow} node [above] {$a_1$} (6,0);
    \draw (3,0) to [out=90,in=90] node {\midarrow} node [above] {$b_1$} (7,0);
    \draw (-0.5,0) to (0.5,0); \draw (2.5,0) to (3.5,0);
    \draw (5.5,0) to (7.5,0); \draw [white] (0,3) to (7,3);
    \draw (1.5,0) node {$\cdots$}; \draw (4.5,0) node {$\cdots$};
    \draw [->] (6,-0.5) to (7,-0.5);
  \end{tikzpicture}
  \Rightarrow
  \begin{tikzpicture} [x=13pt,y=13pt,baseline=(current bounding box.center)]
    \draw (0,0) to [out=90,in=90] node {\midarrow} node [above] {$a_1b_1^{-1}$} (4,0);
    \draw (3,0) to [out=90,in=90] node {\midarrow} node [above] {$b_1$} (7,0);
    \draw (-0.5,0) to (0.5,0); \draw (2.5,0) to (4.5,0);
    \draw (6.5,0) to (7.5,0); \draw [white] (0,3) to (7,3);
    \draw (1.5,0) node {$\cdots$}; \draw (5.5,0) node {$\cdots$};
  \end{tikzpicture}
  \]
  \[
  \begin{tikzpicture} [x=13pt,y=13pt,baseline=(current bounding box.center)]
    \draw (0,0) to [out=90,in=90] node {\midarrow} node [above] {$a_1$} (4,0);
    \draw (3,0) to [out=90,in=90] node {\midarrow} node [above] {$b_1$} (7,0);
    \draw (-0.5,0) to (0.5,0); \draw (2.5,0) to (4.5,0);
    \draw (6.5,0) to (7.5,0); \draw [white] (0,3) to (7,3);
    \draw (1.5,0) node {$\cdots$}; \draw (5.5,0) node {$\cdots$};
    \draw [->] (3,-0.5) to (4,-0.5);
  \end{tikzpicture}
  \Rightarrow
  \begin{tikzpicture} [x=13pt,y=13pt,baseline=(current bounding box.center)]
    \draw (0,0) to [out=90,in=90] node {\midarrow} node [above] {$a_1$} (4,0);
    \draw (1,0) to [out=90,in=90] node {\midarrow} node [above] {$a_1b_1$} (7,0);
    \draw (-0.5,0) to (1.5,0); \draw (3.5,0) to (4.5,0);
    \draw (6.5,0) to (7.5,0); \draw [white] (0,3) to (7,3);
    \draw (2.5,0) node {$\cdots$}; \draw (5.5,0) node {$\cdots$};
  \end{tikzpicture}
  \]
  \[
  \begin{tikzpicture} [x=13pt,y=13pt,baseline=(current bounding box.center)]
    \draw (0,0) to [out=90,in=90] node {\midarrow} node [above] {$a_1$} (4,0);
    \draw (1,0) to [out=90,in=90] node {\midarrow} node [above] {$b_1$} (7,0);
    \draw (-0.5,0) to (1.5,0); \draw (3.5,0) to (4.5,0);
    \draw (6.5,0) to (7.5,0); \draw [white] (0,3) to (7,3);
    \draw (2.5,0) node {$\cdots$}; \draw (5.5,0) node {$\cdots$};
    \draw [->] (1,-0.5) to (0,-0.5);
  \end{tikzpicture}
  \Rightarrow
  \begin{tikzpicture} [x=13pt,y=13pt,baseline=(current bounding box.center)]
    \draw (0,0) to [out=90,in=90] node {\midarrow} node [above] {$a_1$} (4,0);
    \draw (3,0) to [out=90,in=90] node {\midarrow} node [above] {$a_1^{-1}b_1$} (7,0);
    \draw (-0.5,0) to (0.5,0); \draw (2.5,0) to (4.5,0);
    \draw (6.5,0) to (7.5,0); \draw [white] (0,3) to (7,3);
    \draw (1.5,0) node {$\cdots$}; \draw (5.5,0) node {$\cdots$};
  \end{tikzpicture}
  \]
  \caption{Actions of arcslides on the fundamental group (underslide).}
  \label{fig:actionunderslides}
\end{figure}
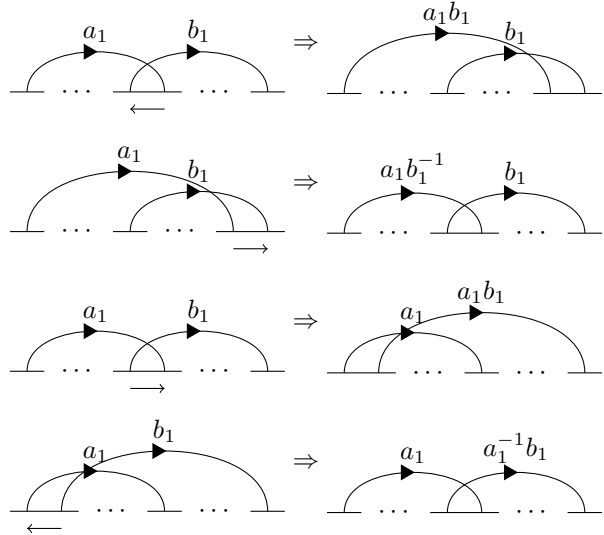

\begin{figure}
  \[
  \begin{tikzpicture} [x=13pt,y=13pt,baseline=(current bounding box.center)]
    \draw (0,0) to [out=90,in=90] node {\midarrow} node [above] {$a_1$} (4,0);
    \draw (1,0) to [out=90,in=90] node {\midarrow} node [above] {$b_1$} (7,0);
    \draw (-0.5,0) to (1.5,0); \draw (3.5,0) to (4.5,0);
    \draw (6.5,0) to (7.5,0); \draw [white] (0,3) to (7,3);
    \draw (2.5,0) node {$\cdots$}; \draw (5.5,0) node {$\cdots$};
    \draw [->] (0,-0.5) to (1,-0.5);
  \end{tikzpicture}
  \Rightarrow
  \begin{tikzpicture} [x=13pt,y=13pt,baseline=(current bounding box.center)]
    \draw (3,0) to [out=90,in=90] node {\midarrow} node [above] {$a_1^{-1}b_1$} (7,0);
    \draw (0,0) to [out=90,in=90] node {\midarrow} node [above] {$b_1$} (6,0);
    \draw (-0.5,0) to (0.5,0); \draw (2.5,0) to (3.5,0);
    \draw (5.5,0) to (7.5,0); \draw [white] (0,3) to (7,3);
    \draw (1.5,0) node {$\cdots$}; \draw (4.5,0) node {$\cdots$};
  \end{tikzpicture}
  \]
  \[
  \begin{tikzpicture} [x=13pt,y=13pt,baseline=(current bounding box.center)]
    \draw (3,0) to [out=90,in=90] node {\midarrow} node [above] {$a_1$} (7,0);
    \draw (0,0) to [out=90,in=90] node {\midarrow} node [above] {$b_1$} (6,0);
    \draw (-0.5,0) to (0.5,0); \draw (2.5,0) to (3.5,0);
    \draw (5.5,0) to (7.5,0); \draw [white] (0,3) to (7,3);
    \draw (1.5,0) node {$\cdots$}; \draw (4.5,0) node {$\cdots$};
    \draw [->] (7,-0.5) to (6,-0.5);
  \end{tikzpicture}
  \Rightarrow
  \begin{tikzpicture} [x=13pt,y=13pt,baseline=(current bounding box.center)]
    \draw (0,0) to [out=90,in=90] node {\midarrow} node [above] {$b_1a_1^{-1}$} (4,0);
    \draw (1,0) to [out=90,in=90] node {\midarrow} node [above] {$b_1$} (7,0);
    \draw (-0.5,0) to (1.5,0); \draw (3.5,0) to (4.5,0);
    \draw (6.5,0) to (7.5,0); \draw [white] (0,3) to (7,3);
    \draw (2.5,0) node {$\cdots$}; \draw (5.5,0) node {$\cdots$};
  \end{tikzpicture}
  \]
  \[
  \begin{tikzpicture} [x=13pt,y=13pt,baseline=(current bounding box.center)]
    \draw (0,0) to [out=90,in=90] node {\midarrow} node [above] {$a_1$} (3,0);
    \draw (4,0) to [out=90,in=90] node {\midarrow} node [above] {$b_1$} (7,0);
    \draw (-0.5,0) to (0.5,0); \draw (2.5,0) to (4.5,0);
    \draw (6.5,0) to (7.5,0); \draw [white] (0,3.5) to (7,3.5);
    \draw (1.5,0) node {$\cdots$}; \draw (5.5,0) node {$\cdots$};
    \draw [->] (3,-0.5) to (4,-0.5);
  \end{tikzpicture}
  \Rightarrow
  \begin{tikzpicture} [x=13pt,y=13pt,baseline=(current bounding box.center)]
    \draw (0,0) to [out=90,in=90] node {\midarrow} node [above] {$a_1b_1$} (7,0);
    \draw (3,0) to [out=90,in=90] node {\midarrow} node [above] {$b_1$} (6,0);
    \draw (-0.5,0) to (0.5,0); \draw (2.5,0) to (3.5,0);
    \draw (5.5,0) to (7.5,0); \draw [white] (0,3.5) to (7,3.5);
    \draw (1.5,0) node {$\cdots$}; \draw (4.5,0) node {$\cdots$};
  \end{tikzpicture}
  \]
  \[
  \begin{tikzpicture} [x=13pt,y=13pt,baseline=(current bounding box.center)]
    \draw (0,0) to [out=90,in=90] node {\midarrow} node [above] {$a_1$} (7,0);
    \draw (3,0) to [out=90,in=90] node {\midarrow} node [above] {$b_1$} (6,0);
    \draw (-0.5,0) to (0.5,0); \draw (2.5,0) to (3.5,0);
    \draw (5.5,0) to (7.5,0); \draw [white] (0,3.5) to (7,3.5);
    \draw (1.5,0) node {$\cdots$}; \draw (4.5,0) node {$\cdots$};
    \draw [->] (7,-0.5) to (6,-0.5);
  \end{tikzpicture}
  \Rightarrow
  \begin{tikzpicture} [x=13pt,y=13pt,baseline=(current bounding box.center)]
    \draw (0,0) to [out=90,in=90] node {\midarrow} node [above] {$a_1$} (3,0);
    \draw (4,0) to [out=90,in=90] node {\midarrow} node [above] {$a_1b_1^{-1}$} (7,0);
    \draw (-0.5,0) to (0.5,0); \draw (2.5,0) to (4.5,0);
    \draw (6.5,0) to (7.5,0); \draw [white] (0,3.5) to (7,3.5);
    \draw (1.5,0) node {$\cdots$}; \draw (5.5,0) node {$\cdots$};
  \end{tikzpicture}
  \]
  \caption{Actions of arcslides on the fundamental group (overslide).}
  \label{fig:actionoverslides}
\end{figure}
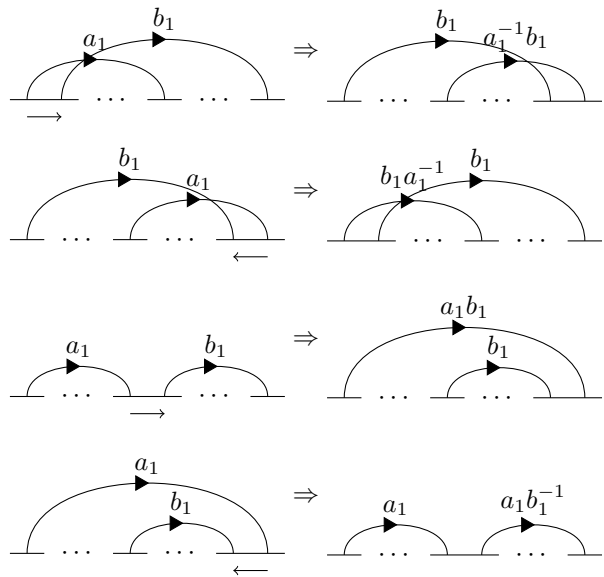

As an example, we verify the decomposition of $\theta_{12}$ into arcslides. Only
the two middle pairs, corresponding to generators $b_2$ and $a_1^{-1}$, are
moved during this sequence of arcslides. We follow what happened to these two
pairs in the following table. We identify pairs of points in a sequence of
arcslides as before. Each line in the table writes the generator corresponding
to pairs identified with the initial $b_2$ and $a_1^{-1}$ pairs in terms of
$\tau_*(\cdot)$ of the initial generators.
\begin{table}[H]
  \centering
  \begin{tabular}{c|c|c}
    Arcslide & $b_2$ & $a_1^{-1}$ \\ \hline
    $4\to 3$ & $b_2$ & $b_2a_1^{-1}$ \\
    $1\to 0$ & $b_2$ & $a_2b_2a_1^{-1}$ \\
    $1\to 2$ & $b_2^{-1}a_2b_2a_1^{-1}$ & $a_2b_2a_1^{-1}$ \\
    $5\to 4$ & $b_2^{-1}a_2b_2a_1^{-1}$ & $a_2b_2a_1^{-1}b_1$ \\
    $6\to 5$ & $b_2^{-1}a_2b_2a_1^{-1}$ & $a_2b_2a_1^{-1}b_1a_1b_2^{-1}a_2^{-1}b_2$ \\
  \end{tabular}
  \label{table:exarcslideaction}
\end{table}
After this sequence of arcslides, the two middle pairs have switched
positions. So the action is $a_1^{-1}\to b_2^{-1}a_2b_2a_1^{-1}$ and $b_2\to
a_2b_2a_1^{-1}b_1a_1b_2^{-1}a_2^{-1}b_2$. The first equation can be rewritten
as $a_1\to a_1b_2^{-1}a_2^{-1}b_2$. This agrees with the fundamental group
action given in Theorem \ref{thm:gensuzuki}.

\begin{proof}[Proof of Lemma \ref{lem:handlebodyinv}]
  By the same argument as in the proof of Lemma \ref{lem:stabinv}, we can show
  that if $\widehatit{CFAD}(\phi)\boxtimes \widehatit{CFD}(\hg) \simeq
  \widehatit{CFD}(\hg)$, then $\widehatit{CFAD}(\mathring{\phi})\boxtimes
  \widehatit{CFD}(\mathbf{H}^{g+1}) \simeq \widehatit{CFD}(\mathbf{H}^{g+1})$.
  where $\mathring{\phi}$ is the element of $\mathrm{MCG}_0(\slz^{g+1})$ that
  fixes the new handle and acts as $\phi$ elsewhere. Here there is again an
  one-to-one correspondence on the generators between
  $\widehatit{CFAD}(\phi)\boxtimes \widehatit{CFD}(\hg)$ and
  $\widehatit{CFAD}(\mathring{\phi})\boxtimes
  \widehatit{CFD}(\mathbf{H}^{g+1})$. There is exactly one domain in the
  adjoined portion that can (and does) contribute an arrow. The evaluation there
  is equivalent to the evaluation of $\widehatit{CFAD}(\iz)\boxtimes
  \widehatit{CFD}(\mathbf{H}^1) \simeq \widehatit{CFD}(\mathbf{H}^1)$ on the
  genus 1 pointed matched circle, giving the arrow in
  $\widehatit{CFD}(\mathbf{H}^{g+1})$ that is inside the adjoined pointed
  matched circle. The remaining domains must be outside the adjoined region,
  showing a one-to-one correspondence between arrows in
  $\widehatit{CFAD}(\phi)\boxtimes \widehatit{CFD}(\hg)$, and the remaining
  arrows in $\widehatit{CFAD}(\mathring{\phi})\boxtimes
  \widehatit{CFD}(\mathbf{H}^{g+1})$. This argument works whether $\slz^{g+1}$
  is formed as $\slz^1\#\slz^g$ or as $\slz^g\#\slz^1$.

  From this, we see that it is sufficient to verify Equation
  (\ref{eq:reparametrization}) for each of the generators of
  $\mathrm{MCG}_0^\beta(F_g)$ in its respective minimum genus (1 or 2) case.

  To do so, we decompose each generator $\phi$ of $\mathrm{MCG}_0^\beta(F_g)$
  (for $g=1$ or $2$ depending on $\phi$) into arcslides
  $\tau_n\circ\cdots\circ\tau_1$, as given above. Then directly compute the left
  side of (\ref{eq:reparametrization}) using the constructions for
  $\hatda(\tau_i)$. This reduces to a finite computation, which we performed on
  a computer using a Python program (which implements the description of
  $\hataa(\iz)$ and the box tensor product). The code for the computation can be
  found at:
  \begin{center}
    \texttt{https://github.com/bzhan/auto2}.
  \end{center}
  The entire computation took less than 20 seconds.

  This concludes the proof of Lemma \ref{lem:handlebodyinv}, and therefore
  Theorem \ref{thm:invclosed}.
\end{proof}

%%% Local Variables:
%%% mode: latex
%%% TeX-master: "dacalc"
%%% End:

\bibliographystyle{plain}
\bibliography{paper}

\end{document}